\definecolor{darkgreen}{rgb}{0,0.5,0}
\definecolor{darkblue}{rgb}{0,0,0.7}
\definecolor{darkred}{rgb}{0.9,0.1,0.1}
\newtheorem*{rep@theorem}{\rep@title}
\newcommand{\newreptheorem}[2]{%
\newenvironment{rep#1}[1]{%
 \def\rep@title{#2 \ref{##1}}%
 \begin{rep@theorem}}%
 {\end{rep@theorem}}}
\newtheorem{theorem}{Theorem}
\newtheorem{proposition}{Proposition}
\newtheorem{lemma}[proposition]{Lemma}
\theoremstyle{remark}
\theoremstyle{definition}
\newtheorem{definition}[proposition]{Definition}
\newtheorem{remark}[proposition]{Remark}
\newtheorem{conjecture}[proposition]{Conjecture}
\numberwithin{equation}{section}
\numberwithin{proposition}{section}
\newcommand{\N}{\mathbb{N}}
\newcommand{\R}{\mathbb{R}}
\newcommand{\E}{\mathbb{E}}
\renewcommand{\P}{\mathbb{P}}
\newcommand{\F}{\mathcal{F}}
\newcommand{\Zd}{\mathbb{Z}^d}
\newcommand{\Rd}{{\mathbb{R}^d}}
\newcommand{\Ze}{\mathcal{Z}}
\newcommand{\di}{\mathrm{d}}
\newcommand{\ep}{\varepsilon}
\renewcommand{\a}{\mathbf{a}}
\newcommand{\ahom}{{\overbracket[1pt][-1pt]{\a}}}  % change the first number to change the size of the bar
\newcommand{\ahominv}{{ \overbracket[1pt][-1pt]{\mathrm{inv} \, \a}}}
\renewcommand{\subset}{\subseteq}
\newcommand{\cu}{\square}
\renewcommand{\fint}{\strokedint}
\DeclareMathOperator{\dist}{dist}
\DeclareMathOperator{\var}{var}
\DeclareMathOperator{\supp}{supp}
\DeclareMathOperator{\size}{size}
\DeclareMathOperator{\argmin}{argmin}
\renewcommand{\bar}{\overline}
\renewcommand{\tilde}{\widetilde}
\newcommand{\indc}{\mathds{1}}
\renewcommand{\O}{\mathcal{O}}
\newcommand{\A}{\mathcal{A}}
\newcommand{\avsum}{\sum}
\begin{document}

\sloppy

\title{Quantitative Homogenization of Differential Forms}

\begin{abstract}
We develop a quantitative theory of stochastic homogenization in a general framework involving differential forms. Inspired by recent progress in the uniformly elliptic setting, the analysis relies on the study of certain sub- and superadditive quantities. We establish an algebraic rate of convergence for these quantities and an algebraic error estimate for the homogenization of the Dirichlet problem. Most of the ideas needed in this article come from two distinct theories, the theory of quantitative stochastic homogenization, and the generalization of the main results of functional analysis and of the regularity theory of second-order elliptic equations to the setting of differential forms.
\end{abstract}

\author[P. Dario]{Paul Dario}
\address[P. Dario]{School of Mathematical Sciences, Tel Aviv University, Ramat Aviv, Tel Aviv 69978, Israel}
\email{pauldario@mail.tau.ac.il}

\keywords{Stochastic homogenization, differential forms}
\subjclass[2010]{35B27, 35J70}
\date{\today}

\maketitle

\setcounter{tocdepth}{1}
\tableofcontents

\section{Introduction}
The classical theory of stochastic homogenization focuses on the study of the second-order elliptic equation
\begin{equation} \label{ehom.01}
\nabla \cdot  \left( \a (x) \nabla u \right) = 0,
\end{equation}
where the environment $\a$ is a random, rapidly oscillating, uniformly elliptic coefficient field. The standard qualitative result states that, under appropriate assumptions of ergodicity and stationarity on the law of the environment, a solution $u_r$ of the equation~\eqref{ehom.01} in a ball converges almost surely as the radius of the ball tends to infinity to a deterministic solution $\bar{u}_r$ of the equation
\begin{equation} \label{ehom.02}
\nabla \cdot  \left( \ahom  \nabla \bar{u}_r \right) = 0,
\end{equation} 
where $\ahom$ is a constant, symmetric, definite-positive matrix called the homogenized environment, see~\cite{K1, PV1, Y1, JKO}. Developing a quantitative theory of stochastic homogenization drew a lot of attention in the recent years to the point that the theory is now well-understood, see e.g.~\cite{armstrong2017quantitative, AKM2, AM, AS, GNO, GO1, GO2,  GO5}.

The purpose of this article is to extend the theory to a more general setting of degenerate systems of equations involving differential forms. To introduce the problem, we fix a dimension $d \geq 2$ and an integer $k \in \{ 0 , \ldots, d \}$. We let $\Lambda^{k}\left( \Rd\right)$ be the set of $k$-alternating multilinear maps. These spaces are equipped with an exterior product, denoted by the symbol $\wedge$, which maps the space $\Lambda^{k_1}\left( \Rd\right) \times \Lambda^{k_2}\left( \Rd\right)$ to the space $\Lambda^{k_1 + k_2}\left( \Rd\right)$, and with a canonical scalar product for which the collection $\left( \di x_{i_1} \wedge \cdots \wedge \di x_{i_k} \right)_{1 \leq i_1 < \cdots < i_k \leq d}$ is an orthonormal basis.
Given a domain $U \subseteq \Rd$, a $k$-differential form $u$ on $U$ is a mapping from $U$ to $\Lambda^{k}\left( \Rd\right)$ which can be decomposed along the canonical basis of $\Lambda^{k}\left( \Rd\right)$ according to the formula $u := \sum_{1 \leq i_1 < \cdots < i_k \leq d } \, u_{i_1, \cdots, i_k} \, \di x_{i_1} \wedge \cdots \wedge \di x_{i_k}.$

There is an important notion of derivative associated with differential forms: the exterior derivative, denoted by d, which sends $k$-forms to $(k+1)$-forms, and is defined by the formula
\begin{equation*}
\di u := \sum_{i = 1}^{d}  \sum_{1 \leq i_1 < \cdots < i_k \leq d }  \frac{\partial u_{ i_1, \cdots ,  i_k}}{\partial x_i} \di x_i \wedge \di x_{i_1} \wedge \cdots \wedge  \di x_{i_k}.
\end{equation*}
We denote by $\mathcal{L}\left(\Lambda^k \left( \Rd \right), \Lambda^{d-k} \left( \Rd \right) \right)$ the set of linear maps from $\Lambda^k \left( \Rd \right)$ to $\Lambda^{d-k} \left( \Rd \right)$ and define an environment $\a$ to be a measurable map defined on $\Rd$, valued in $\mathcal{L}\left(\Lambda^k \left( \Rd \right), \Lambda^{d-k} \left( \Rd \right) \right)$, and
satisfying the following uniform ellipticity and symmetry assumptions: for each $x \in \Rd$ and each pair $p_1,p_2 \in \Lambda^r(\Rd)$,
\begin{equation} \label{symandelllassumption}
\lambda |p_1|^2  \leq \left| p_1 \wedge \a(x) p_1 \right| \leq \frac 1\lambda |p_1|^2 \hspace{5mm} \mbox{and} \hspace{5mm}
 p_1 \wedge \a(x) p_2 =  p_2 \wedge \a(x) p_1 ,
\end{equation}
where we use the symbol $|\cdot|$ to denote the canonical norm on the Euclidean vector spaces $\Lambda^k \left( \Rd \right)$ and~$\Lambda^d \left( \Rd \right).$ We assume that the environment is random and satisfies the assumptions of stationarity and finite range dependence stated in~\eqref{stationarityassumption} and~\eqref{independenceassumption}, denote by $\P$ the law of the environment $\a$ and by $\E$ the expectation with respect to the probability distribution $\P$ (see Section~\ref{section:NARH}). The objective of this article is then to study the large-scale behavior of the solutions of the equation
\begin{equation} \label{eq:formintroell}
\di \left( \a \di u \right) = 0 ~\mbox{in}~ U,
\end{equation}
which are the critical points associated with the functional
\begin{equation} \label{chapitre0.ehom.03}
\mathcal{J}(U,u) := \int_U  \di u \wedge \a \di u.
\end{equation} 
Let us note that this setting strictly contains the framework of uniformly elliptic equations~\eqref{ehom.01}. Indeed the case $k=0$ corresponds to the uniformly elliptic equation~\eqref{ehom.01}. On the other hand, this formalism is part of the more general framework of the elliptic systems of partial differential equations: since the dimensions of the space $\Lambda^k \left( \Rd \right)$ is finite, the equation $\di \left( \a \di u \right) = 0$ can be written as an elliptic system. However, the system~\eqref{eq:formintroell} is \emph{not} uniformly elliptic.

A motivation to study these systems comes from the specific case when $r = 1$ and the underlying space is $4$-dimensional: in this setting the system of equations in~\eqref{chapitre0.ehom.03} has the same structure as \emph{Maxwell's equations} (see e.g.\ \cite[Section~1.2]{levy_survey}), with yet a fundamental difference: here we assume $\a(x)$ to be Riemannian, that is, elliptic in the sense of~\eqref{symandelllassumption}, while for Maxwell's equations the underlying geometric structure is Lorentzian. Replacing a Lorentzian geometry by a Riemannian one, a procedure sometimes referred to as ``Wick's rotation'', is very common in constructive quantum field theory, see e.g.\ \cite[Section~6.1(ii)]{gj-book}. While the objects we study here are minimizers of a random Lagrangian, we believe that the techniques developed in this article will be equally informative for the study of the Gibbs measures associated with such Lagrangians.

The main result of this article is to prove a quantitative homogenization theorem for differential forms. Following the method developed for uniformly elliptic equations  in~\cite{armstrong2017quantitative}, we first introduce a subadditive energy $J$ defined by the formula, for every bounded domain $U \subset \Rd$ and every pair~$(p, q) \in \Lambda^{r}(\Rd) \times \Lambda^{(d-r)}(\Rd)$,
\begin{equation}
\label{e.def.J}
J(U ,p ,q) :=\sup_{v \in \A(U) } \fint_U \left( -\frac 12 \di v \wedge \a \di v -  p \wedge \a \di v +   \di v \wedge q \right),
\end{equation}
where the notation $\A(U)$ denotes the set of solutions of the system $\di \left( \a \di u \right) = 0$ over the set $U$ (see~\eqref{defsolution} below).

The energy $J$ is nonnegative and satisfies a subadditivity property with respect to the domain~$U$ stated in Proposition~\ref{basicpropJ}. In particular, if we let $\cu_n$ be the triadic cube centered at $0$ of sidelength $3^n$ (see Section~\ref{section2}), then the sequence
$
n \mapsto  \E \left[  J(\cu_n ,p ,q) \right]
$
decreases and converges as $n$ tends to infinity. Our first result identifies the limit in terms of an homogenized tensor $\ahom$, provides an algebraic rate of convergence and quantifies the stochastic integrability. We first introduce a notation to measure the latter parameter.

\begin{definition} \label{def.stochint1.1}
For each exponent $s > 0$, each constant $K > 0$, and each non-negative random variable $X$, we write
$
X \leq \O_s (K)$ if and only if $\E \left[ \exp \left( X^s/K^s \right) \right] \leq 2.
$
\end{definition}
We are now ready to state the first theorem of this article.

\begin{theorem}[Quantitative convergence of the energy functional $J$] \label{maintheorem}
Let $r$ be an integer in $\{ 1 , \ldots, d \}$. There exist an exponent $\alpha(d, \lambda) > 0$, a constant $C(d, \lambda) < \infty$ and a linear mapping $\ahom \in \mathcal{L} \left(\Lambda^{r}(\Rd), \Lambda^{(d-r)}(\Rd) \right)$ such that, for every integer $n \in \N$,
\begin{equation} \label{statementmaintheorem}
\sup_{(p, q) \in B_1\Lambda^{r}(\Rd) \times B_1\Lambda^{d-r}(\Rd)}  \left| J(\cu_n ,p , q) - \frac 12 p \wedge \ahom p -  \frac 12 \ahom^{-1} q \wedge q + \star \left( p \wedge q \right) \right| \leq \O_2 \left( C 3^{-n \alpha}\right),
\end{equation}
where $B_1\Lambda^{r}(\Rd)$ denotes the ball of radius one of the Euclidean space $\Lambda^{r}(\Rd)$, the symbol $\star$ denotes the Hodge star operator, defined in~\eqref{def:hodgestar}.
\end{theorem}

Once this result is established, we deduce the quantitative homogenization theorem which is stated below.

\begin{theorem}[Homogenization theorem] \label{homogenizationtheorem}
Let $U$ be a bounded smooth domain of $\Rd$, $r$ be an integer in $\{ 1 ,\ldots , d \} $, $\varepsilon$ be a real number in $(0, 1]$ and $f $ be a form whose coefficients are in the Sobolev space~$H^2(U)$. Then there exist two solutions $u^\varepsilon , u$ of the Dirichlet boundary value problems
\begin{align} \label{eq:(1.10)}
    & \left\{ \begin{aligned}
        \di \left( \a \left( \frac{\cdot}{\varepsilon}\right) \di u^\varepsilon \right) = 0 & \hspace{5mm} \mbox{in} ~U,\\
       \mathbf{t} u^\varepsilon = \mathbf{t} f & \hspace{5mm}  \mbox{on} ~\partial U,
    \end{aligned} \right. &&
    \begin{aligned}
\mbox{and}
    \end{aligned}&&
     \left\{ \begin{aligned}[l]
 	\di \left( \ahom \di u \right) = 0 & \hspace{5mm} \mbox{in} ~U, \phantom{ \left( \frac{\cdot}{\varepsilon}\right)} \\
       \mathbf{t} u = \mathbf{t} f & \hspace{5mm} \mbox{on} ~\partial U,
    \end{aligned}\right.
\end{align}
where the notation $\mathbf{t}u$ denotes the tangential trace of the form $u$ over the boundary $\partial U$ defined in Section~\ref{section2.1}, an exponent $\alpha := \alpha (d , \lambda) > 0$ and a constant $C := C(d , \lambda , U) < \infty$ such that 
\begin{equation*}
\left\| u^{\varepsilon} - u \right\|_{L^2\Lambda^r(U)} +  \left\| \di u^{\varepsilon} - \di u \right\|_{H^{-1}\Lambda^r(U)} \leq\O_2 \left( C  \left\| \di f \right\|_{H^1\Lambda^{r+1}  (U)}  \varepsilon^\alpha \right),
\end{equation*}
where the $L^2\Lambda^r(U)$, $H^{-1}\Lambda^r(U)$ and $H^{1}\Lambda^r(U)$-norms are the Sobolev norms for differential forms defined in Subsections~\ref{sec1.6.1} and~\ref{sectionH-1norms}.
\end{theorem}

The third main result of this article investigates the relation between homogenization and the natural duality structure of differential forms and shows a commutation property between these two structures. More precisely,  if one considers an environment $\a \in \Omega$ sending $r$-forms to $(d-r)$-forms and satisfying the assumptions of ellipticity and symmetry~\eqref{symandelllassumption}, then the inverse environment $\a^{-1}$ sends $(d-r)$-forms to $r$-forms and satisfies the same properties. This allows to define the dual energy, for any pair $(q, p) \in \Lambda^{d-r}(\Rd) \times  \Lambda^r(\Rd)$,
\begin{equation*}
J_{\mathrm{inv}}(\cu_m ,q ,p) := \sup_{u \in \A^{\mathrm{inv}}\left(\cu_m \right) } \fint_{\cu_m} \left( -\frac 12 \a^{-1} \di u \wedge \di u - \a^{-1} \di u  \wedge p +   q \wedge  \di u  \right),
\end{equation*}
where $\A^{\mathrm{inv}}\left(\cu_m \right)  $ is the set of solutions of the dual system $\di \left( \a^{-1} \di u \right) = 0$ over the set $U$ defined in~\eqref{def.Ainv}. In Section~\ref{section7}, we prove that the conclusion of Theorem~\ref{maintheorem} holds for the energy functional $J_{\mathrm{inv}}$: there exists a linear map $\ahominv$ which sends $(d-r)$-forms to $r$-forms such that, for each pair $(q , p) \in \Lambda^{(d-r)}(\Rd) \times  \Lambda^r(\Rd),$
\begin{equation*}
\sup_{(p, q) \in  B_1\Lambda^{d-r}(\Rd) \times B_1\Lambda^{r}(\Rd) }  \left| J_{\mathrm{inv}} \left( \cu_n , q , p \right) -  \frac 12  p \wedge \ahominv^{-1} p -  \frac 12 \ahominv q \wedge q  - \star \left( p \wedge q \right) \right| \leq \O_2 \left( C 3^{- \alpha n}\right) .
\end{equation*}
Theorem~\ref{dualityprop} shows that the two linear maps $\ahom$ and $\ahominv$ are related by the following identity.

\begin{theorem}[Duality]  \label{dualityprop}
The homogenized linear maps $\ahom$ and $\ahominv$ satisfy
$
\ahominv =  \left( \ahom \right)^{-1}.
$
\end{theorem}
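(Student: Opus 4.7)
The plan is to establish the exact identity
\begin{equation*}
J_{\mathrm{inv}}(\cu_n, \ahom p, p) = J(\cu_n, p, \ahom p)
\end{equation*}
for every $p \in \Lambda^r(\Rd)$ and every $n \in \N$, and then to read off the duality from the uniqueness characterization of $\ahominv$. The identity combines a Poincar\'e-type bijection between solution spaces with the involution $v \mapsto -v$.

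First I would establish the correspondence. If $v \in \A(\cu_n)$, then $\a \di v$ is an $L^2$ $(d-r)$-form with $\di(\a \di v) = 0$, so since $\cu_n$ is contractible the $L^2$ Poincar\'e lemma produces a form $u \in H^1_\di \Lambda^{d-r-1}(\cu_n)$ with $\di u = \a \di v$, uniquely determined modulo $C^{d-r-1}_\di(\cu_n)$. Since $\a^{-1} \di u = \di v$ and $\di(\a^{-1} \di u) = 0$ we have $u \in \A^{\mathrm{inv}}(\cu_n)$; running the construction in reverse gives a bijection
\begin{equation*}
\A(\cu_n)/C^{r-1}_\di(\cu_n) \;\longleftrightarrow\; \A^{\mathrm{inv}}(\cu_n)/C^{d-r-1}_\di(\cu_n), \qquad v \longleftrightarrow u.
\end{equation*}
Because the integrand defining $J_{\mathrm{inv}}$ depends only on $\di u$, it descends to the quotient, and substituting $\di u = \a \di v$ and $\a^{-1} \di u = \di v$ yields
\begin{equation*}
J_{\mathrm{inv}}(\cu_n, \ahom p, p) = \sup_{v \in \A(\cu_n)} \fint_{\cu_n} \left( -\tfrac12 \di v \wedge \a \di v - \di v \wedge \ahom p + p \wedge \a \di v \right).
\end{equation*}
Since $\A(\cu_n)$ is a linear space, the change of variable $v \mapsto -v$ preserves the supremum; the quadratic term is invariant while each linear term changes sign, so after relabelling
\begin{equation*}
J_{\mathrm{inv}}(\cu_n, \ahom p, p) = \sup_{v \in \A(\cu_n)} \fint_{\cu_n} \left( -\tfrac12 \di v \wedge \a \di v + \di v \wedge \ahom p - p \wedge \a \di v \right) = J(\cu_n, p, \ahom p),
\end{equation*}
the last equality being a rearrangement of the three $d$-form summands.

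By Theorem~\ref{maintheorem} the right-hand side is bounded by $\O_1(C 3^{-n\alpha})$, uniformly in $p \in B_1\Lambda^r(\Rd)$. The analog of Theorem~\ref{maintheorem} for the inverse environment, proved earlier in Section~\ref{section7}, characterizes $\ahominv$ as the \emph{unique} element of $\mathcal{L}(\Lambda^{d-r}(\Rd), \Lambda^r(\Rd))$ for which such an algebraic decay holds; applied with $p' := \ahom p$ it forces $\ahominv(\ahom p) = p$ for every $p$. Since $\ahom$ is symmetric and satisfies \eqref{ellipticityassumption} it is invertible, so $\ahominv = (\ahom)^{-1}$. The main obstacle is the Poincar\'e bijection of the second paragraph: it has to be carried out rigorously in the $L^2$/$H^1_\di$ framework on the cube, with explicit matching of the two quotient spaces of closed forms so that the two suprema range over genuinely corresponding sets. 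Once this is in place, the rest reduces to a short algebraic identity followed by a single invocation of uniqueness.
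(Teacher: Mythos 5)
Your proposal is correct and follows essentially the same route as the paper: establish that the map $u \mapsto v$ with $\di v = \a \di u$ identifies $\A(\cu_n)$ with $\A^{\mathrm{inv}}(\cu_n)$ modulo closed forms (via the Poincar\'e lemma), substitute to rewrite $J_{\mathrm{inv}}(\cu_n,p,q)$ as $J(\cu_n,q,p)$ (the paper phrases the sign adjustment as $J(\cu_n,-q,-p)=J(\cu_n,q,p)$ rather than via the substitution $v\mapsto -v$, but this is the same observation that $J$ is quadratic), and then conclude by the uniqueness of the homogenized map in Theorem~\ref{maintheorem}.
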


This duality structure is behind certain exact formulas for the homogenized matrix which are known to hold in dimension~$d=2$ (see~\cite[Chapter 1]{JKO}). We note that similar results were obtained independently by Serre~\cite{serre2017periodic} in the case of periodic coefficients. 

\subsection{Related results}
A qualitative theory of stochastic homogenization was first developed by Kozlov~\cite{K1}, Papanicolaou and Varadhan~\cite{PV1} in the uniformly elliptic setting under an assumption of ergodicity on the environment and the first quantitative results are due to Yurinski\u\i~\cite{Y1} (see also the monograph~\cite{JKO}). More recently, many important progress have been obtained in the development of a quantitative theory of stochastic with the works of Gloria and Otto~\cite{GO1, GO2, GO3} and Gloria, Neukamm, Otto~\cite{GNO, GNO2} who proved a number of optimal estimates under the assumption that the law of the environment satisfies some form of spectral gap inequality. Another approach based upon the study of sub- and superadditive quantities  was later initiated by Armstrong and Smart~\cite{AS} and developed by Armstrong, Kuusi and Mourrat in~\cite{AKM1, AKM2}. We refer to the monograph~\cite{armstrong2017quantitative} for a summary of this approach.

The results presented in this article extend the theory of homogenization to a system of equations which is elliptic but not uniformly elliptic. The question of the extension of the theory to non-uniformly elliptic environments has been an active subject of research over the past decades, partly due to its connection with the random conductance model (see~\cite{Ku10, biskupsurvey} for a survey on this model) and we review below some results in degenerate stochastic homogenization. In~\cite{AN19}, Andres and Neukamm established a Berry-Esseen theorem and decay estimates on the semigroup associated to a degenerate random conductance model. In~\cite{neukamm2017stochastique}, Neukamm, Sch\"affner and Schl\"omerkemper studied homogenization of nonconvex energy functionals with degenerate growth under moments condition. In~\cite{bella2018liouville}, Bella, Fehrman and Otto studied degenerate elliptic systems of equations and established, under some moments condition on the law of the coefficient field, a first-order Liouville theorem and a large scale $C^{1 , \alpha}$-regularity theory. In~\cite{flegel2017homogenization},  Flegel, Heida and Slowik studied homogenization of discrete degenerate elliptic equations on the discrete lattice $\Zd$ under moment conditions on the law of the conductances and allowing jumps of arbitrarily length. In~\cite{GHV18},  Giunti, H\"ofer and Vel\'azquez studied homogenization of the Poisson equation in a randomly perforated domain (see~\cite{GH19} for an extension of the result to the Stokes equation). In~\cite{GM18}, Giunti and Mourrat studied the degenerate random conductance model and found some sufficient and necessary conditions for the relaxation of the environment seen by the particle to be diffusive and obtained as a corollary moment bounds on the corrector. In~\cite{LNO}, Lamacz, Neukamm and Otto studied homogenization on a percolation model modified such that all the bonds in a given direction are declared open. In~\cite{AD2}, Armstrong and the first author established a quantitative homogenization theorem and a large scale regularity theory on the infinite cluster in supercritical Bernoulli bond percolation.

To the best of our knowledge, the only results about homogenization of differential forms were obtained by Serre in~\cite{serre2017periodic} in the periodic setting.

\subsection{Further outlook and conjecture} The rates of convergence obtained for the energy functional~$J$ stated in Theorem~\ref{maintheorem} and in the homogenization theorem, Theorem~\ref{homogenizationtheorem}, are suboptimal. We expect that the strategy developed in~\cite{AKM1, AKM2} should apply to improve the value of the exponent. More specifically, we expect the following optimal rate to hold.

\begin{conjecture}
Let $r$ be an integer in $\{ 1 , \ldots, d \}$. There exist a constant $C(d, \lambda) < \infty$ and a linear mapping $\ahom \in \mathcal{L} \left(\Lambda^{r}(\Rd), \Lambda^{(d-r)}(\Rd) \right)$, which is symmetric and satisfies the ellipticity condition~\eqref{ellipticityassumption}, such that, for every $n \in \N$,
\begin{equation*}
\sup_{(p, q) \in B_1\Lambda^{r}(\Rd) \times B_1\Lambda^{d-r}(\Rd)}  \left| J(\cu_n ,p , q) - \frac 12 p \wedge \ahom p -  \frac 12 \ahom^{-1} q \wedge q + \star \left( p \wedge q \right) \right| \leq \O_1 \left( C 3^{-n}\right).
\end{equation*}
\end{conjecture}

An improvement in the rate of convergence of the subadditive energy $J$ can be transferred into an improvement in the rate of convergence in Theorem~\ref{homogenizationtheorem}, with minor and mostly notational changes of the arguments.

\subsection{Outline of the proof}\label{section1.3} Following the ideas developed by Armstrong, Mourrat and Kuusi in~\cite[Chapter 2]{armstrong2017quantitative} in the case of uniformly elliptic equations, the strategy to obtain an algebraic rate of convergence for the energy $J$ relies on the following argument. We first reduce the problem and show that to obtain the estimate~\eqref{statementmaintheorem}, it is sufficient to study the deterministic quantity $\E \left[ J \left( \cu_n , p , \ahom p \right) \right]$ and to prove that it decays algebraically fast. To this end, we prove an estimate of the form, for each $p \in \Lambda^r(\Rd)$,
\begin{equation} \label{key.estJell}
\E \left[ J \left( \cu_{n+1} , p , \ahom p \right) \right] \leq C \left( \E\left[ J\left( \cu_n , p , \ahom p \right) \right] - \E\left[ J\left( \cu_{n+1} , p , \ahom p \right) \right] \right),
\end{equation}
which implies the inequality
\begin{equation*}
\E \left[ J \left( \cu_{n+1} , p , \ahom p \right) \right] \leq \frac{C}{C+1} \E\left[ J\left( \cu_n , p , \ahom p \right) \right].
\end{equation*}
Iterating the previous display provides a quantitative rate of convergence for the expectation of the subadditive quantity $J$.
The proof of the estimate~\eqref{key.estJell} is thus the key ingredient in the proof of Theorem~\ref{maintheorem} and is carried out in Section~\ref{section5}. The proof relies on a series of lemmas (Lemmas~\ref{l.descendre},~\ref{l.flatness} and~\ref{l.flatness2}) where various quantities are estimated in terms of the subadditivity defect $\tau_n := \left(  \E\left[ J\left( \cu_n , p , \ahom p \right) \right] - \E\left[ J\left( \cu_{n+1} , p , \ahom p \right) \right] \right)$.
Once the quantitative convergence of the expectation is established, one deduces the quantitative convergence of the random variable $J$ with exponential moments as stated in Theorem~\ref{maintheorem} by using the subbaditivity argument combined with a concentration inequality.

In Section~\ref{section6}, we use the results of Section~\ref{section5}, and more specifically the quantitative sublinearity of the corrector and the flux stated in Proposition~\ref{weakcvggradientandflux} to deduce Theorem~\ref{homogenizationtheorem} by using a two-scale expansion (see~\eqref{ad0}).

While the strategy comes from~\cite[Chapter 2]{armstrong2017quantitative}, a number of technical difficulties arise due to the specific structure of the problem; in particular the tools of functionnal analysis useful in the development of the theory of stochastic homogenization need to be adapted to the setting of differential forms. This is achieved by using results  of Mitrea, Mitrea, Monniaux~\cite{MMM08}, Mitrea, Mitrea, Shaw~\cite{MMS08} and the monograph of Schwarz~\cite{Sch95}.

We complete this section by giving the outline of the proof of Theorem~\ref{dualityprop}. It relies on the two following observations: the map $J_{\mathrm{inv}} $ is related to the map $J$ by the formula $J\left( \cu_m , q , p \right) = J_{\mathrm{inv}} \left( \cu_n , p , q \right)$, and the assumptions of Theorem~\ref{maintheorem} are satisfied by the random environment $\a^{-1}$, which implies that there exists a linear map $\ahominv$ such that $J_{\mathrm{inv}}\left(\cu_m , q , \ahominv q  \right) \leq \O_2 \left( C3^{-\alpha m} \right)$.
A combination of these two observations yields the identity $\ahom^{-1} =  \ahominv$.

\subsection{Organization of the paper.} The rest of this article is organized as follows. In Section~\ref{section3}, we state without proof some properties pertaining to differential forms. In Section~\ref{section4}, we generalize some inequalities known for functions to the setting of differential forms. In Sections~\ref{section5} and~\ref{section6} are devoted to the proofs of Theorem~\ref{maintheorem} and~\ref{homogenizationtheorem} respectively. In Section~\ref{section7}, we study the duality structure between $r$-forms and $(d-r)$-forms and prove Theorem~\ref{dualityprop}. Appendix~\ref{appA} is devoted to the proof of some regularity estimates.

\subsection{Convention for constants, exponents} In this article, the symbols $c$ and $C$ denote positive constants which may vary from line to line. These constants may depend only on the dimension $d$ and the ellipticity $\lambda$. Similarly we use the symbols $\alpha, \, \beta$ to denote positive exponents which may vary from line to line and depend only on $d$, $\lambda$. We use the symbol $C$ for large constants (whose value is expected to belong to $[1, \infty)$) and $c$ for small constants (whose value is expected to be in $(0,1]$). The values of the exponents $\alpha, \, \beta$ are always expected to be small. When the constants and exponents depend on other parameters, we write it explicitly and use the notation $C := C(d , U , \lambda)$ to mean that the constant $C$ depends on the parameters $d ,U$ and $\lambda$.

\subsection{Notation and assumptions} \label{section2}
We unfortunately must introduce quite a bit of notation which are collected in this section. The reader is encouraged to skim and consult as a reference.

\subsubsection{General notations and definitions} \label{sec1.6.1}
We consider the space $\Rd$ in dimension $d \geq 2$, equipped with the standard Euclidean norm denoted by $|\cdot|$. We denote by $e_1 , \ldots , e_d$ the canonical basis of $\Rd$. A cube of $\Rd$, generally denoted by the symbol $\cu$, is a set of the form $\cu := z + (-R/2,R/2)^d;$ we denote by $\size(\cu) := R$ the sidelength of the cube. We will frequently use the cube centered at $0$ and of sidelength $3^m$, for which we introduce a specific notation: for $m \in \N$, we write
$
\cu_m :=  \left( - 3^m/2 ,  3^m/2 \right)^d.
$
We refer to the cubes of the form $
z +\cu_m , \, m \in \N, z \in 3^m \Zd$ as triadic cubes. Given two sets $U, V \subseteq \Rd$, we denote by $\dist(U,V) := \inf_{x \in U , y \in V} |x - y|$.

If $U$ is an open subset of $\Rd$, we denote its Lebesgue measure by $|U|$. The normalized integral of a function $u : U \rightarrow \R$ is denoted by
\begin{equation*}
\fint_U u(x) \, \di x := \frac{1}{|U|} \int_U u(x) \, \di x.
\end{equation*}
We denote by $L^p(U)$, for $1 \leq p \leq \infty$, and $H^s(U)$, for $s \in \R$, the standard Lebesgue and Sobolev spaces on the open set $U$.

For $0 \leq r \leq d$, we denote by $\Lambda^{r}(\Rd)$ the space of $r$-linear forms. It is an Euclidean space of dimension~$\binom{d}{r}$, a canonical basis is given by the family $\left(\di x_{i_1} \wedge \ldots \wedge  \di x_{i_r} \right)_{1 \leq i_1 < \ldots < i_r \leq d}.$ To ease the notation, we denote by
\begin{equation} \label{notationconventiondxI}
\di x_I := \di x_{i_1} \wedge \ldots \wedge  \di x_{i_r}, \mbox{ for } I = \left\{  i_1 , \ldots, i_r \right\} \subseteq  \left\{ 1 , \ldots ,d \right\}.
\end{equation}
We denote by $(\cdot , \cdot )$ the scalar product on $\Lambda^r(\Rd)$, i.e., for any pair $\alpha, \beta \in \Lambda^r \left( \Rd \right)$
\begin{equation} \label{def.scproductform}
\left( \sum_{|I|=r} \alpha_I \di x_I,  \sum_{|I|=r} \beta_I \di x_I \right) =  \sum_{|I|=r} \alpha_I \beta_I. 
\end{equation}
We use the notation, for $\alpha \in \Lambda^r(\Rd)$,
$
|\alpha| = \sqrt{(\alpha , \alpha)}.
$
We denote by $B_1 \Lambda^r (\Rd)$ the unit ball of $\Lambda^r(\Rd)$.

Given $U$ an open subset of $\Rd$, a differential form is a measurable map defined in $U$, valued in $\Lambda^{r}(\Rd)$, which can be decomposed $u : = \sum_{|I| = r} u_I \di x_I$.

\smallskip

Throughout the article, we need to assume some regularity on the differential form $u$. To this end, we introduce the following spaces:
\begin{itemize}
\item The space of smooth (resp. smooth and compactly supported) differential forms on $U$, denoted by $C^\infty \Lambda^r \left( U \right)$ ( resp. $C^\infty_c \Lambda^r \left( U \right)$), i.e.,
\begin{equation*}
\begin{aligned}
C^\infty \Lambda^r \left( U \right) := \left\{ u = \sum_{|I| = r} u_I (x) \di x_I~:~ \forall I , ~ u_I \in C^\infty \left( U \right)  \right\}, \\
C^\infty_c \Lambda^r (U) := \left\{ u = \sum_{|I| = r} u_I (x) \di x_I~:~ \forall I , ~u_I \in C_c^\infty(U)  \right\},
\end{aligned}
\end{equation*}
where the notations $C^\infty \left( U \right)$ (resp. $C^\infty_c \left( U \right)$) refers to the set of smooth (resp. smooth and compactly supported) functions defined on $U$ and valued in $\R$. 
We denote by $\mathcal{D}_r (U)$ the space of $r$-currents, i.e., the space of formal sums
$
\sum_{|I|= r} u_I \di x_I,
$
where, for each subset $I \subseteq \{ 1 , \ldots, d \}$ of cardinality $r$, $u_I$ is a distribution on $\Omega$. It is equivalently defined as the topological dual of the space $C^\infty_c \Lambda^r \left( U \right)$;

\smallskip

\item For $1 \leq p \leq \infty$, we let $L^p \Lambda^r (U)$ be the set of $L^p$-differential forms on $U$, i.e.,
\begin{equation*}
L^p \Lambda^r (U) := \left\{ u = \sum_{|I| = r} u_I (x) \di x_I~:~ \forall I , ~u_I \in L^p(U) \right\},
\end{equation*}
equipped with the norm, for $1 \leq p < \infty$,
\begin{equation*}
\| u \|_{L^p \Lambda^r (U)}^p := \sum_{|I| = r} \int_U \left| u_I (x) \right|^p \, dx \hspace{3mm} \mbox{and} \hspace{3mm} \| u \|_{L^\infty \Lambda^r (U)} := \sum_{|I| = r} \mathrm{ess \, sup}_U \, u_I,
\end{equation*}
where the notation ess sup refers to the essential supremum of the mapping $u$.
For $1 \leq p < \infty$, we introduce the normalized $L^p$-norm
\begin{equation*}
\| u \|_{\underline{L}^p \Lambda^r (U)}^p := \sum_{|I| = r} \fint_U \left|u_I (x) \right|^p \, \di x.
\end{equation*}
We also equip the space $L^2 \Lambda^r (U)$ with the scalar product defined by the formula $\left\langle u , v \right\rangle_{U} := \sum_{|I| = r} \int_U u_I(x) v_I(x) \, dx$;
\smallskip
\item For a regularity exponent $k \in \N$, we define the set of $H^k$-differential forms on $U$, denoted by $H^k \Lambda^r (U)$, i.e.,
\begin{equation*}
H^k \Lambda^r (U) := \left\{ u = \sum_{|I| = r} u_I (x) \di x_I~:~ \forall I , ~u_I \in H^k(U)  \right\},
\end{equation*}
equipped with the scalar product~$\left\langle u , v \right\rangle_{H^k  \Lambda^r (U)} := \sum_{|I| = r} \left\langle u_I, v_I \right\rangle_{H^k(U)}$, where the notation $\left\langle \cdot, \cdot \right\rangle_{H^k(U)}$ is used to denote the scalar product of the Hilbert space $H^k(U)$.
\end{itemize}
We denote the $i$th-partial derivative of a form $u$ by $\partial_{i} u$, it is understood in the sense of currents according to the formula
$
\partial_{i} u = \sum_{|I| = r} \partial_{i} u_I \di x_I, 
$
where $\partial_{i} u_I$ is understood in the sense of distributions. The gradient of $u$, denoted by $\nabla u := \left( \partial_{1} u, \ldots, \partial_{d} u \right)$, is a vector-valued differential form. Higher derivatives, which are also vector-valued forms, are denoted by, for $l \geq 1$,
$
\nabla^l u := \left( \partial_{i_1}\ldots  \partial_{i_l} u \right)_{i_1, \ldots, i_l \in \{ 1, \ldots , d\} }.
$

Given a $m$-form $\alpha$ and an $r$-form $\omega$, we consider the exterior product $\alpha \wedge \omega$ which is an $(m + r)$-form and satisfies the following property
$
\alpha \wedge \omega = \left( -1 \right)^{mr}  \omega \wedge \alpha.
$
If $m+r > d$, we set $\omega\wedge \alpha = 0$.

\smallskip

We define the exterior derivative which maps $C^\infty \Lambda^r\left(U\right)$ to $C^\infty \Lambda^{r+1}\left(U\right)$ according to the formula
\begin{equation*}
\di u = \sum_{|I|= r} \sum_{k \notin I} \frac{\partial u_I}{\partial x_k} \di x_k \wedge \di x_I,
\end{equation*}
and extend the definition of this operator to currents. In particular, if $u$ is a differential form of degree $d$, then $\di u = 0$. This operator satisfies the following properties
\begin{equation} \label{propd}
\di \circ \di = 0 \hspace{5mm} \mbox{ and } \hspace{5mm} \di ( u \wedge v ) = (\di u) \wedge v + (-1)^r u \wedge (\di v).
\end{equation}
Given a smooth form $u := \sum_{|I|= r} u_I \di x_I  \in C^\infty \Lambda^{r} \left( U \right)$, an open set $V \subseteq \Rd$ and a smooth map $\Phi = \left( \Phi_1 , \ldots, \Phi_d \right) : V \rightarrow U $, we define the pullback of $u$ by $\Phi$ to be the smooth form defined by the formula
\begin{equation*}
\Phi^* u :=   \left\{ \begin{aligned}
V & \rightarrow  \Lambda^{r}(\Rd), \\
 x  & \mapsto \sum_{I= \left\{ i_1 , \ldots , i_r \right\}} u_{I} \left( \Phi(x) \right)  \di \Phi_{i_1} (x)  \wedge \cdots \wedge \di \Phi_{i_r} (x),
 \end{aligned}  \right.
\end{equation*}
where $\di \Phi_i(x)$ denotes the differential of the map $\Phi_i$ evaluated at the point $x$. The pullback satisfies the following properties, given a $r$-form $u$ and a $m$-form $v$,
\begin{equation} \label{pullbackdist}
\Phi^*  \di u = \di \Phi^* u ~ \mbox{and} ~ \Phi^*  \left( u \wedge v \right) = \Phi^*  u \wedge\Phi^*  v.
\end{equation}
Given another open set $W \subseteq \Rd$ and another smooth map $\Psi : W \rightarrow V$, we have the composition rule
$
\Psi^* \left(\Phi^* u \right) = \left( \Phi \circ \Psi \right)^* v.
$
Moreover, if we assume that $\Phi$ is a smooth diffeomorphism from $V$ to $U$ such that all the derivatives of $\Phi$ are bounded then, for each integer $k \in \N$, the mapping $\Phi^*$ sends the space $H^k \Lambda^r (U)$ to the space $H^k \Lambda^r (V)$ and we have the estimate
\begin{equation} \label{phistarhscont}
\left\| \Phi^* u \right\|_{H^k \Lambda^r (V)} \leq C \left\| u \right\|_{H^k \Lambda^r (U)},
\end{equation}
for some constant $C$ depending on the dimension $d$, the integer $k$ and the function $\Phi.$

There is a canonical bijection between the spaces $\Lambda^{r}(\Rd)$ and $\Lambda^{(d-r)}(\Rd)$ given by the Hodge star operator. It is denoted by the symbol~$\star$, sends the space $ \Lambda^{r}(\Rd) $ to the space $ \Lambda^{(d-r)}(\Rd)$ and satisfies the property, for each $\alpha, \beta \in \Lambda^{r}(\Rd)$,
\begin{equation*}
\alpha \wedge \star \beta = (\alpha , \beta) \di x_1 \wedge \cdots \wedge \di x_d.
\end{equation*}
It is defined on the canonical basis by the formula
\begin{equation} \label{def:hodgestar}
\star \left(\di x_{i_1} \wedge \cdots \wedge \di x_{i_r} \right) : = \di x_{i_{r+1}} \wedge \cdots \wedge \di x_{i_d},
\end{equation}
where $(i_1,\ldots , i_d)$ is an even permutation of $\left\{ 1 , \ldots , d \right\}$. An important property of this operator is the following, for each form $\alpha \in \Lambda^{r}(\Rd)$,
\begin{equation} \label{invstar}
\star \star \alpha = (-1)^{r(d-r)} \alpha.
\end{equation}
Let $u = u_{\{1,\ldots, d\}} \di x_1 \wedge \cdots \wedge \di x_d$ be a $d-$form defined on~$U$. If the function $u_{\{1,\ldots, d\}}$ belongs to the space $L^1(U)$, we say that $u$ is integrable and define
\begin{equation} \label{def.int.int}
\int_U u := \int_U u_{\{ 1,\ldots, d\}} (x) \, \di x.
\end{equation}
In particular, the scalar product on $L^2 \Lambda^r (U)$ can be rewritten, for each pair of forms $u, v \in L^2 \Lambda^r(U)$,
\begin{equation*}
\left\langle u, v \right\rangle_U = \int_U u \wedge \star v.
\end{equation*}
 Additionally, if $\Phi$ is a smooth positively oriented diffeomorphism mapping $V$ to $U$, then the change of variables formula reads, for each integrable $d$-form $u$,
\begin{equation} \label{changevar}
\int_{V} \Phi^* u = \int_{U}  u.
\end{equation}

We then define the normal and tangential components of a form. We consider a smooth bounded domain $U \subset \Rd$, denote by~$\nu$ the outward normal of $\partial U$ and fix a smooth $r$-form $u \in C^\infty \Lambda^{r}(\Rd)$. For each point $x \in \partial U$, we define $\mathbf{n}u (x) \in \Lambda^r \left( \Rd\right) $, the normal component of $u(x)$, to be the orthogonal projection of $u(x)$ with respect to the scalar product $(\cdot,\cdot)$ defined in~\eqref{def.scproductform} on the kernel of the mapping

\begin{equation} \label{defnormalcomponent}
\di \nu (x) \wedge \, \cdot ~ : \left\{ \begin{aligned}
\Lambda^{r}(\Rd) & \rightarrow  \Lambda^{r+1}(\Rd), \\
 v  & \rightarrow \di \nu (x) \wedge v.
\end{aligned}  \right.
\end{equation}
The tangential component of $u(x)$, denoted by $\mathbf{t} u(x)$, is given by the formula
\begin{equation} \label{tangentialcomponent}
\mathbf{t} u(x) = u(x) - \mathbf{n}u(x).
\end{equation}
For a smooth form $u \in C^\infty \Lambda^{d-1} (U)$, using the previous notation, we know that there exists a smooth function $v : \partial U \rightarrow \R$ such that, for each $x \in \partial U$,
$
\mathbf{t} u(x) = v(x) \di e_1^x \wedge \cdots \wedge \di e_{d-1}^x ,
$
where the vectors $e_1^x, \ldots , e_{d-1}^x  \in \Rd$ are such that $(e_1^x,\ldots, e_{d-1}^x, \nu(x))$ is an orthonormal basis positively oriented of $\Rd$.
With this notation, we define the integral of $u$ on $\partial U$ by the formula
\begin{equation} \label{defboundaryint}
\int_{\partial U} u = \int_{\partial U} v(x) \di \mathcal{H}^{d-1} (x),
\end{equation}
where $\mathcal{H}^{d-1}$ is the Hausdorff measure of dimension $(d-1)$ on $\Rd$.

The two definitions of integrals~\eqref{def.int.int} and~\eqref{defboundaryint} are related by the Stokes'~formula: for each smooth bounded domain $U \subset \Rd$ and each form $u \in C^\infty\Lambda^{d-1}(U)$, 
\begin{equation} \label{Stokes}
\int_{\partial U} u = \int_{ U} \di u.
\end{equation}

We define the codifferential $\delta$, to be the formal adjoint of the exterior derivative $\di$ with respect to the scalar product $\left\langle \cdot, \cdot \right\rangle_{U}$, i.e., the operator which satisfies for each pair $(u , v) \in C^\infty_c \Lambda^{r-1}(U) \times C^\infty_c \Lambda^r(U)$,
\begin{equation*}
\left\langle \di u, v \right\rangle_{U} = \left\langle u, \delta v \right\rangle_{U}.
\end{equation*}
This operator can be explicitly computed using the second equality in~\eqref{propd}, the equality~\eqref{invstar}, and the Stokes' formula~\eqref{Stokes}. We obtain
\begin{equation} \label{explicitformuladelta}
\delta = (-1)^{(r-1)d + 1} \star \di \star.
\end{equation}
For each open subset $U \subseteq \Rd$, and each integer $r \in \{ 0 , \ldots , d-1 \}$, we define the space $H^1_{\di} \Lambda^r(U)$ to be the set of forms $u$ in the space $L^2\Lambda^r(U)$ such that the exterior derivative $\di u$ belongs to the space $L^2\Lambda^{r+1}(U)$, i.e.,
\begin{equation*}
H^1_\di \Lambda^r (U) := \left\{ u \in L^2\Lambda^r(U) ~:~ \exists f \in L^2\Lambda^{r+1}(U), \forall v \in C^\infty_c \Lambda^{d-r-1} (U) ,\int_U \left( u \wedge \di v + (-1)^r f \wedge v \right) = 0  \right\}.
\end{equation*}
Given a form $u \in H^1_\di \Lambda^r(U)$, we denote by $\di u$ the unique form in the space $L^2\Lambda^{r+1}(U)$ which satisfies, for every $v \in C^\infty_c \Lambda^{d-r-1} (U)$,
\begin{equation} \label{IPPdi}
\int_U \left( u \wedge \di v + (-1)^r \di u \wedge v \right) = 0.
\end{equation}
This space is a Hilbert space equipped with the norm
\begin{equation*}
\| u \|_{ H^1_\di \Lambda^r(U)}^2 = \left\langle u , u \right\rangle_U + \left\langle \di u , \di u \right\rangle_U.
\end{equation*}
In the case $r=d$, we have $\di u = 0$ for each form $u \in L^2 \Lambda^d (U)$. This implies the equality $ H^1_\di \Lambda^d(U) = L^2 \Lambda^d (U)$. We denote by $H^1_{\di , 0} \Lambda^r(U)$ the closure of $C_c^\infty \Lambda^r (U)$ in $H^1_{\di } \Lambda^r(U)$, i.e.,
\begin{equation*}
H^1_{\di , 0} \Lambda^r(U) := \bar{C_c^\infty \Lambda^r (U)}^{H^1_{\di } \Lambda^r(U)}.
\end{equation*}
Symmetrically, for each integer $r \in \{ 0 , \ldots , d-1 \}$, we define $H^1_{\delta} \Lambda^r(U)$ to be the set of forms $u$ in~$L^2\Lambda^r(U)$ such that the codifferential of $u$ belongs to the space $L^2\Lambda^{r-1}(U)$, i.e.,
\begin{equation*}
H^1_\delta \Lambda^r (U) := \left\{ u \in L^2\Lambda^r(U) ~:~ \exists f \in L^2\Lambda^{r-1}(U), \forall v \in  C^\infty_c \Lambda^{d-r+1} (U) ,\int_U \left( u \wedge \delta v  + (-1)^{d-r} f \wedge v \right) = 0 \right\}.
\end{equation*}
and in that case, we denote by $\delta u$ the unique form which satisfies, for each $v \in  C^\infty_c \Lambda^{d-r+1} (U)$, $\int_U u \wedge \delta v  + (-1)^{d-r} \delta u \wedge v  = 0$. In the case $r=0$, we have $\delta u = 0$ for each function $u \in L^2  (U)$. This implies the equality $ H^1_\delta \Lambda^0(U) = L^2 (U)$. We also denote by $H^1_{\delta , 0} \Lambda^r(U)$ the closure of $C_c^\infty \Lambda^r (U)$ in $H^1_{\delta } \Lambda^r(U)$, i.e.,
\begin{equation*}
H^1_{\delta , 0} \Lambda^r(U) := \bar{C_c^\infty \Lambda^r (U)}^{H^1_{\delta } \Lambda^r(U)}.
\end{equation*}
We say that a form~$u \in H^1_\di \Lambda^r(U)$ is closed (resp. co-closed) if it satisfies $\di u = 0$ (resp. $\delta u = 0$). We denote by $C^r_\di(U)$ and $C^r_{\di,0} (U)$ the subsets of closed $r$ forms and closed $r$ forms with vanishing tangential trace, i.e.,
\begin{equation*}
C^r_\di (U) := \left\{ u \in  H^1_\di\Lambda^r(U) ~ : ~ \di u = 0  \right\} \hspace{3mm} \mbox{and} \hspace{3mm} C^r_{\di,0} (U) := C^r_\di (U) \cap H^1_{\di , 0} \Lambda^r(U).
\end{equation*}
Symmetrically, we denote by $C^r_\delta(U)$ and $C^r_{\delta,0} (U)$ the subsets of co-closed $r$ forms and co-closed $r$ forms with vanishing normal trace, i.e.,
\begin{equation*}
C^r_\delta (U) := \left\{ u \in  H^1_\di\Lambda^r(U) ~ : ~ \delta u = 0  \right\} \hspace{3mm} \mbox{and} \hspace{3mm} C^r_{\delta,0} (U) := C^r_\delta (U) \cap H^1_{\delta , 0}\Lambda^r(U).
\end{equation*}

\subsubsection{Notation related to the probability space}

In this section, we record some properties of the notation $\O_s$ introduced in Definition~\ref{def.stochint1.1} to measure stochastic integrability. The notation is homogeneous:
$
X \leq \O_s (C) \Longleftrightarrow X/C \leq \O_s(1).
$

More generally, for $\theta_0 , \theta_1 , \ldots , \theta_n \in \R^+$ and $C_1 , \ldots ,C_n \in \R^+_*$, we write
$
X \leq \theta_0 + \theta_1 \O_s\left(C_1 \right) + \cdots + \theta_n \O_s\left(C_n \right)
$
to mean that there exist nonnegative random variables $X_1, \ldots, X_n$ satisfying the estimate $X_i \leq \O_s\left(C_n \right)$ such that
$
X \leq  \theta_0 + \theta_1X_1 + \cdots + \theta_n X_n.
$
We record a property about this notation, the proof of which can be found in~\cite[Lemma A.4]{armstrong2017quantitative}.

\begin{proposition}\label{propertiesbigO} For each $s \in (0, \infty)$, there exists a constant $C_s < \infty$ such that the following holds. Let $\mu$ be a measure over an arbitrary measurable space $E$, let $\theta: E \rightarrow (0, \infty)$ be a measurable function and $(X(x))_{x \in E}$ be a jointly measurable family of nonnegative random variables such that, for every $x \in E, X(x) \leq \O_s \left( \theta(x) \right)$. Then we have
\begin{equation} \label{e.0mean}
\int_E X(x)  \mu(dx) \leq \O_s \left( C_s \int_E \theta(x) \mu( dx) \right).
\end{equation}
Consequently, if $X_1, \ldots , X_n$ are non-negative random variables satisfying, for each $i \in \left\{ 1, \ldots, n \right\}$, $X_i \leq  \O_s \left( C_i\right)$, then
\begin{equation*}
\sum_{i = 1}^{n} X_i \leq \O_s \left( C_s  \sum_{i= 1}^{n} C_i \right).
\end{equation*}
\end{proposition}

\subsubsection{Negative Sobolev spaces.} \label{sectionH-1norms} We introduce three $H^{-1}$-norms for differential forms. We fix an integer $r \in \{ 0 , \ldots , d\}$ and a bounded open set $U \subseteq \Rd$. We define the following spaces:
\begin{itemize}
\item The space $H^{-1}\Lambda^r \left( U \right)$ is the topological dual of the space $H^{1}_0\Lambda^r \left( U \right)$, it is equipped with the norm, for each $\alpha \in H^{-1}\Lambda^r \left( U \right)$,
\begin{equation*}
\left\| \alpha \right\|_{H^{-1}\Lambda^r \left( U \right)} := \sup \left\{ \left\langle \alpha, \omega \right\rangle \, : \, \left\| \omega \right\|_{H^1_0\Lambda^r(U)} \leq 1 \right\},
\end{equation*} 
where $\left\langle \cdot , \cdot \right\rangle$ denotes the duality product between the spaces $H^{-1}\Lambda^r \left( U \right)$ and $H^{1}_0\Lambda^r \left( U \right)$.
\item The space $H^{-1}_{\di}\Lambda^r \left( U \right)$ is the topological dual of the space $H^{1}_{\di , 0}\Lambda^r \left( U \right)$, it is equipped with the norm, for each $\alpha \in H^{-1}_{\di}\Lambda^r \left( U \right)$,
\begin{equation*}
\left\| \alpha \right\|_{H^{-1}_{\di}\Lambda^r \left( U \right)} := \sup \left\{ \left\langle \alpha, \omega \right\rangle \, : \, \left\| \omega \right\|_{H^1_{\di , 0}\Lambda^r(U)} \leq 1 \right\},
\end{equation*}
where $\left\langle \cdot , \cdot \right\rangle$ denotes the duality product between the spaces $H^{-1}_{\di}\Lambda^r \left( U \right)$ and $H^{1}_{\di , 0}\Lambda^r \left( U \right)$.
\item The space $\underline{H}^{-1}\Lambda^r \left( U \right)$ is the topological dual of the space $H^{1}\Lambda^r \left( U \right)$, it is equipped with the scales norm, for each $\alpha \in H^{-1}\Lambda^r \left( U \right)$,
\begin{equation*}
\left\| \alpha \right\|_{\underline{H}^{-1}\Lambda^r \left( U \right)} := \sup \left\{ \left\langle \alpha, \omega \right\rangle \, : \, \left\| \nabla \omega \right\|_{\underline{L}^2\Lambda^r(U)} + |U|^{-\frac 1d}\left| \left( \omega \right)_{U} \right| \leq 1 \right\},
\end{equation*}
where $\left\langle \cdot , \cdot \right\rangle$ denotes the duality product between the spaces $\underline{H}^{-1}\Lambda^r \left( U \right)$ and $H^{1}\Lambda^r \left( U \right)$.
\end{itemize}
By definition of these spaces, we have the following continuous inclusions $H^{-1}_{\di}\Lambda^r \left( U \right) \subseteq H^{-1}\Lambda^r \left( U \right)$  and $\underline{H}^{-1}\Lambda^r \left( U \right) \subseteq H^{-1}\Lambda^r \left( U \right).$

\subsubsection{Notation and assumptions related to homogenization} \label{section:NARH}
Given $\lambda \in (0 ,1]$ and $1 \leq r \leq d$, we consider the space of measurable functions from $\Rd$ to $\mathcal{L} \left( \Lambda^{r}(\Rd) , \Lambda^{(d-r)}(\Rd) \right)$ satisfying the symmetry assumption, for each $x \in \Rd$ and each $ p_1,p_2 \in \Lambda^{r}(\Rd)$,
\begin{equation} \label{symmetryassumption}
p_1 \wedge \a(x) p_2 = p_1 \wedge \a(x) p_2,
\end{equation}
and the ellipticity assumption, for each $x \in \Rd$ and each $p \in \Lambda^{r} (\Rd)$,
\begin{equation} \label{ellipticityassumption}
\lambda |p|^2  \leq \star ( p \wedge \a(x) p) \leq \frac 1\lambda |p|^2.
\end{equation}
We denote by $\Omega_r$ the collection of all such measurable maps,
\begin{multline} \label{defOmegar}
\Omega_r := \Big\{ \a (\cdot) \, : \, \a : \Rd \rightarrow \mathcal{L} \left( \Lambda^{r}(\Rd) , \Lambda^{(d-r)} \left( \Rd \right) \right)  \mbox{ is Lebesgue measurable}  \\   \mbox{and satisfies \eqref{symmetryassumption} and \eqref{ellipticityassumption}}\Big\}.
\end{multline}
A generic element of the set $\Omega_r$ is denoted by $\a$ and referred to as an \emph{environment}. We endow $\Omega_r$ with the translation group $(\tau_y)_{y \in \Rd}$, acting on $\Omega_r$ via
$
\left( \tau_y \a \right) (x) := \a (x+y) ,
$
and with the family $\left\{ \mathcal{F}_r(U) \right\}$ of $\sigma$-algebras on $\Omega_r$, with $\mathcal{F}_r(U)$ defined for each Borel subset $U \subseteq \Rd$ by the formula
\begin{multline*}
\mathcal{F}_r(U) := \bigg\{ \sigma \mbox{-algebra on } \Omega_r \mbox{ generated by the family of maps } \\
						 \a \rightarrow \int_U p \wedge \a(x) q \phi(x), \, p,q \in \Lambda^{r}(\Rd), \, \phi \in C^\infty_c (U) \bigg\}.
\end{multline*}
The largest of these $\sigma$-algebras is $ \F_r(\Rd)$, simply denoted by $\F_r$. The translation
group may be naturally extended to the $\sigma$-algebra $\F_r$ by defining, for $A \in \F_r$,
$
\tau_y A := \left\{ \tau_y \a \, : \, \a \in A \right\}.
$
We then endow the measurable space $(\Omega_r , \F_r)$ with a probability measure $\P_r$ satisfying the two following assumptions:
\begin{itemize}
\item The measure $\P_r$ is invariant under $\Zd$-translations: for every $z \in \Zd$, $A \in \F_r$,
\begin{equation} \label{stationarityassumption}
\P \left[ \tau_z A \right] = \P \left[ A \right];
\end{equation}
\item The measure $\P_r$ has a unit range dependence: for every pair of Borel subsets $U,V \subseteq \Rd$ with $\dist(U,V) \geq 1$,
\begin{equation} \label{independenceassumption}
\F_r(U) \mbox{ and } \F_r(V) \mbox{ are independent}.
\end{equation}
\end{itemize}
The expectation of an $\F_r$-measurable random variable $X$ with respect to $\P_r$ is denoted by $\E_r[X]$ or simply $\E[X]$ when there is no confusion about the value of $r$.

Given an integer $1 \leq r \leq d$, an environment $\a \in \Omega_r$ and an open subset $U \subseteq \Rd$, we say that $u \in H^1_\di \Lambda^{r-1} (U)$ is a solution of the equation
$
\di ( \a \di u ) = 0,
$
if for every smooth compactly supported form $v \in C^\infty_c \Lambda^r (U)$,
$
\int_U \di u \wedge \a \di v = 0.
$
We denote by $\A_r^\a(U)$ the set of solutions, i.e.,
\begin{equation} \label{defsolution}
\A_r^\a(U) := \left\{ u \in H^1_\di \Lambda^r (U) \, :\, \forall v \in C^\infty_c \Lambda^r (U),\int_U \di u \wedge \a \di v = 0 \right\}.
\end{equation}
When there is no confusion, we omit the subscripts $r$ and $\a$ and only write $\A(U)$.

\bigskip

\noindent \textit{Acknowledgement.} I would like to thank Jean-Christophe Mourrat and Scott Armstrong for helpful discussions and comments.

\section{Some results pertaining to differential forms} \label{section3}

In this section, we record some properties pertaining to the spaces $H^1_\di \Lambda^r (U)$, $H^1_\delta \Lambda^r (U)$ and $C^r_\di(U)$. Most of these results and their proofs can be found in~\cite{MMM08} and~\cite{MMS08}.

\subsection{Tangential and normal trace of a differential form.} \label{section2.1} Given a bounded Lipschitz domain $U$ of $\Rd$, we define the Sobolev space $H^{1/2} (\partial U)$ as the set of functions of $L^2(\partial U)$ which satisfy
\begin{equation*}
\left[ g \right]_{H^{1/2} (\partial U)} := \left( \int_{\partial U}  \int_{\partial U} \frac{|g(x) - g(y)|^2}{|x-y|^{d+1}}  \di \mathcal{H}^{d-1}(x)  \di \mathcal{H}^{d-1}(y) \right)^{\frac 1 2} < \infty,
\end{equation*}
where $\mathcal{H}^{d-1}$ denotes the Hausdorff measure of dimension $d-1$ of $\Rd$. It is a Hilbert space equipped with the norm
$
\| g \|_{H^{1/2} (\partial U)} := \| g \|_{L^2 (\partial U)}+ \left[ g \right]_{H^{1/2} (\partial U)}.
$
We define $H^{-1/2} (\partial U) $ to be the dual of $ H^{1/2} (\partial U)$.
We can then extend this definition to differential forms by defining, for each integer $r$ in $ \{ 0, \ldots, d \}$,
\begin{equation*}
H^{1/2} \Lambda^r (\partial U) := \left\{ u \in L^2 \Lambda^r (\partial U) \mbox{ s.t. } u = \sum_{|I|=r} u_I \di x_I \mbox{ and } \forall I, \, \left[ u_I \right]_{H^{1/2} (\partial U)} < \infty   \right\}.
\end{equation*}
This is also a Hilbert space equipped with the norm,
$
\| u \|_{H^{1/2} \Lambda^r (\partial U)} := \| u \|_{L^2 \Lambda^r (\partial U)}+ \sum_{|I|=r}  \left[ u_I \right]_{H^{1/2} (\partial U)}.
$
We define the space $H^{-1/2}  \Lambda^r (\partial U) $ to be the dual of the space $H^{1/2}  \Lambda^{d- r} (\partial U)$.

By the Sobolev Trace Theorem for Lipschitz domains (see~\cite[Chapter VII, Theorem 1]{jonsson1984function} or~\cite{marschall1987trace}), if $U$ is a bounded Lipschitz domain of $\Rd$, then the linear operator $C^\infty \left(U \right) \rightarrow \mathrm{Lip} ( \partial U)$ that restricts a smooth function defined on $\overline{U}$ to the boundary $\partial U$ has an extension to a bounded linear mapping from $H^1(U)$ into $H^{1/2}  (\partial U)$. We denote this operator by Tr and extend to differential forms by setting, for $u = \sum_{|I| =r} u_I \di x_I \in H^1\Lambda^{r}(U)$,
$
\mathrm{Tr} \, u = \sum_{|I| =r} \mathrm{Tr} \, u_I \, \di x_I \in H^{1/2}  \Lambda^r (\partial U).
$

In the case when $u$ does not belong to the space $H^1 \Lambda^r(U)$ but only belongs to the larger space $H^1_\di \Lambda^r (U)$, one still has a Sobolev trace theorem, but one can only get information on the tangential component of the trace of $u$ as is explained in the following proposition, which is a specific case of~\cite[Proposition 4.1 and Proposition 4.3]{MMS08}.
\begin{proposition}[\cite{MMS08}, Proposition 4.1 and Proposition 4.3] \label{tangentialtrace}
For each $u \in H^1_\di \Lambda^{r-1} (U)$ (resp. $u \in H^1_\delta \Lambda^{r+1} (U)$), the map
\begin{equation*}
\left\langle \mathbf{t} u , \cdot \right\rangle : \left\{ \begin{aligned}
 H^{1/2}  \Lambda^{(d-r)} (\partial U)  \rightarrow \,  \R, \hspace{19mm} \\
\psi   \rightarrow \int_U \left( \di u \wedge \Psi +  (-1)^r u \wedge \di \Psi \right),
\end{aligned}  \right.
\quad \mbox{resp.} \quad
\left\langle \mathbf{n} v , \cdot \right\rangle : \left\{ \begin{aligned}
H^{1/2}  \Lambda^{(d-r)} (\partial U)  \rightarrow \,  \R,  \hspace{21mm} \\
\psi   \rightarrow \int_U \left( \delta v \wedge \Psi +  (-1)^{d-r} v \wedge \delta \Psi \right),
\end{aligned}  \right.
\end{equation*}
where $\Psi \in H^1 \Lambda^{d-r} (U)$ is chosen such that $\mathrm{Tr} \,  \Psi = \psi$, is well-defined, linear and bounded. Then the tangential trace operator defined by the formula
\begin{equation*}
 \mathbf{t} : \left\{ \begin{aligned}
H^1_\di \Lambda^r (U)  \rightarrow &\, H^{-1/2}  \Lambda^{r} ( \partial U ) \\
u   \rightarrow \left\langle \mathbf{t}u, \cdot \right\rangle
\end{aligned}  \right.
\quad \mbox{resp.} \quad
 \mathbf{n} : \left\{ \begin{aligned}
H^1_\delta \Lambda^r (U)  \rightarrow &\, H^{-1/2}  \Lambda^{r} ( \partial U ) \\
u   \rightarrow \left\langle \mathbf{n}u, \cdot \right\rangle
\end{aligned}  \right.
\end{equation*}
is linear and continuous. Additionally this notation is consistent with the tangential (resp. normal) component introduced in~\eqref{tangentialcomponent}.
\end{proposition}

The following property shows that, for a Lipschitz domain $U$ in $\Rd$, the space $H^1_{\di,0} \Lambda^r (U)$ (resp. $H^1_{\delta,0} \Lambda^r (U)$) is also the space of differential forms in $H^1_\di \Lambda^r (U)$ (resp. $H^1_\delta \Lambda^r (U)$) with tangential (resp. normal) trace equal to $0$. A proof for these results can be found in~\cite[Lemma 2.13]{MMM08}.

\begin{proposition}[\cite{MMM08}, Lemma 2.13] \label{propdensity}
Let $U$ be an open bounded Lipschitz subset of $\Rd$. For each integer $r \in \{0 , \ldots , d \}$, the following results hold:
\begin{itemize}
\item The space of smooth differential forms $C^\infty \Lambda^r \left( U \right)$ is dense in $H^1_\di \Lambda^r (U)$ (resp. $H^1_\delta \Lambda^r (U)$);
\item The space $C^\infty_c \Lambda^r \left( U \right)$ of smooth and compactly supported differential forms is
dense in $\left\{ u \in H^1_{\di} \Lambda^r (U) ~:~ \mathbf{t}u = 0 \right\}$ and in $\left\{ u \in H^1_{\delta} \Lambda^r (U) ~:~ \mathbf{n}u = 0 \right\}$. In particular, one has
\begin{equation*}
H^1_{\di,0} \Lambda^r (U) = \left\{ u \in H^1_{\di} \Lambda^r (U) \,:\, \mathbf{t}u = 0 \right\} \quad\mbox{and}\quad H^1_{\delta,0} \Lambda^r (U) = \left\{ u \in H^1_{\delta} \Lambda^r (U) \,:\, \mathbf{n}u = 0 \right\}.
\end{equation*}
\end{itemize}
\end{proposition}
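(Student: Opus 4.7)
My plan is to prove both density statements using the standard translate-and-mollify scheme. The key observation that makes everything work cleanly for differential forms is that the exterior derivative $\di$ commutes with both translation and convolution by a smooth kernel, so any approximation scheme that converges in $L^2\Lambda^r$ for both $u$ and $\di u$ automatically converges in $H^1_\di$. The Lipschitz hypothesis on $\partial U$ enters only through the following geometric fact: there is a finite open cover $V_0, V_1, \ldots, V_N$ of $\bar U$ with $V_0 \Subset U$ such that each $V_i \cap \partial U$ (for $i \geq 1$) is the graph of a Lipschitz function over a hyperplane, and consequently there is a unit vector $v_i$ and a constant $t_i > 0$ with the property that $(\bar U \cap V_i) + t v_i \subset U$ for all $t \in (0, t_i]$.

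For the first claim, fix $u \in H^1_\di \Lambda^r(U)$, a subordinate partition of unity $(\varphi_i)$ of $\bar U$, and decompose $u = \sum_i \varphi_i u$. The interior piece $\varphi_0 u$ has compact support in $U$ and mollifies into $C_c^\infty \Lambda^r(U)$ directly. For each boundary piece $\varphi_i u$, I would set $u_i^t(x) := u(x + t v_i)$, which is defined and $H^1_\di$ on the open set $U - t v_i$ (a shifted copy of $U$ which, locally in $V_i$, strictly contains $\bar U \cap V_i$). Multiplying by $\varphi_i$ and convolving with a mollifier $\rho_\varepsilon$ of scale $\varepsilon \ll t$ produces a smooth form defined on $U \cap V_i$ that extends smoothly across the boundary; sending first $\varepsilon \to 0$ and then $t \to 0$ and summing over $i$ yields a sequence in $C^\infty \Lambda^r(U)$ converging to $u$ in $H^1_\di$ by the continuity of translation on $L^2$ applied both to $u$ and to $\di u$.

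For the second claim, the essential new ingredient is the following extension-by-zero lemma: if $u \in H^1_\di \Lambda^r(U)$ satisfies $\mathbf{t} u = 0$, then its zero extension $\tilde u$ lies in $H^1_\di \Lambda^r(\Rd)$ with $\di \tilde u$ equal to the zero extension of $\di u$. This reduces, via the Stokes identity, to showing that for every test form $\Phi \in C_c^\infty \Lambda^{d-r-1}(\Rd)$ the identity $\int_U (u \wedge \di \Phi + (-1)^r \di u \wedge \Phi) = 0$ holds; but the left-hand side is exactly the pairing $\pm \langle \mathbf{t} u, \mathrm{Tr}\,\Phi \rangle$ defined in Proposition~\ref{tangentialtrace}, which vanishes by assumption. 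With $\tilde u \in H^1_\di \Lambda^r(\Rd)$ in hand, I would repeat the localization scheme of the previous paragraph, but now translating $\varphi_i \tilde u$ by $-t v_i$ (the opposite direction from before), so that its support is pushed into $U$ at distance at least $c t$ from $\partial U$, where $c > 0$ depends only on the Lipschitz constants of the graphs. Mollifying at scale $\varepsilon \ll t$ then produces an element of $C_c^\infty \Lambda^r(U)$ converging to $\varphi_i u$ in $H^1_\di$, and summing over $i$ closes the argument; the ``in particular'' statement follows by taking closures and combining with continuity of $\mathbf{t}$.

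The $H^1_\delta$ cases follow from the $H^1_\di$ cases by applying the Hodge star: the identity $\delta = (-1)^{(r-1)d+1} \star \di \star$ from \eqref{explicitformuladelta}, combined with the fact that $\star$ is an $L^2$-isometry that intertwines the tangential and normal traces up to sign, transfers the approximation result from $H^1_\di \Lambda^{d-r}(U)$ to $H^1_\delta \Lambda^r(U)$. The main technical difficulty in the whole scheme is the extension-by-zero lemma: the vanishing of $\mathbf{t} u$ is an a priori distributional condition, defined via integration by parts against $H^1$-extensions, and one must check carefully that this weak vanishing is enough to kill every potential boundary contribution when integrating by parts globally on $\Rd$ against test forms that do not vanish on $\partial U$. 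Once that point is granted, the remainder is a routine partition-of-unity and translate-and-mollify scheme made possible by the Lipschitz graph structure of $\partial U$.
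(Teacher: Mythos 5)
The paper does not prove this proposition; it is stated as a citation to \cite[Lemma 2.13]{mitrea2008poisson} (Mitrea--Mitrea--Shaw), so there is no ``paper's own proof'' to compare against. That said, your translate-and-mollify scheme is exactly the standard argument used in that reference and its antecedents, and it is correct. The three structural ingredients you isolate are the right ones: (a) $\di$ commutes with translation and mollification, so $H^1_\di$-convergence follows automatically from $L^2$-convergence of the form and its exterior derivative; (b) the local-graph structure of a Lipschitz boundary supplies, after a partition of unity, a translation direction $v_i$ that pushes $\bar U \cap V_i$ strictly into $U$ (or, for the second claim, pushes $\supp(\varphi_i \tilde u)$ strictly into $U$); and (c) the extension-by-zero lemma. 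You correctly identify (c) as the only genuinely nontrivial point and your justification is sound: the integral $\int_U (u \wedge \di\Phi + (-1)^r \di u \wedge \Phi)$, for $\Phi \in C^\infty_c\Lambda^{d-r-1}(\Rd)$, is (up to an overall sign) exactly the pairing $\langle\mathbf{t}u, \mathrm{Tr}\,\Phi\rangle$ from Proposition~\ref{tangentialtrace}, and the vanishing of $\mathbf{t}u$ in $H^{-1/2}\Lambda^r(\partial U)$ kills all such boundary contributions simultaneously, which is precisely what the zero extension needs. Two small points worth making explicit in a full write-up: first, after translating and mollifying, the sum over $i$ converges to $u$ because $\sum_i \varphi_i = 1$ on $\bar U$ and translation is strongly continuous on $L^2$, applied separately to $\varphi_i u$, $\di\varphi_i \wedge u$, and $\varphi_i \di u$ (the last two accounting for the Leibniz term in $\di(\varphi_i u)$); second, the ``in particular'' equality $H^1_{\di,0}\Lambda^r(U) = \{u : \mathbf{t}u = 0\}$ uses both directions --- density gives one inclusion, while the trivial inclusion $C^\infty_c\Lambda^r(U) \subset \{\mathbf{t}u = 0\}$ together with continuity of $\mathbf{t}$ (so that $\{\mathbf{t}u = 0\}$ is closed in $H^1_\di$) gives the other. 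Your Hodge-star transfer to the $\delta$-cases is also correct: $\star$ is a pointwise isometry sending $H^1_\di\Lambda^r$ to $H^1_\delta\Lambda^{d-r}$ via \eqref{explicitformuladelta}, preserves $C^\infty$ and $C^\infty_c$, and exchanges $\mathbf{t}$ and $\mathbf{n}$ up to sign.
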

 A corollary of this proposition is that the space of solutions $\A(U)$, defined in~\eqref{defsolution}, can be equivalently defined by the formula
\begin{equation} \label{defsolutionbis}
\A(U) := \left\{ u \in H^1_\di \Lambda^r (U) \, :\, \forall v \in H^1_{\di,0} \Lambda^{d-r} (U), \int_U \di u \wedge \a \di v = 0 \right\}.
\end{equation}

\subsection{Solvability of the equation $\di u = f$.}
We record a result concerning the solvability of the equation $\di u = f$ on bounded Lipschitz star-shaped domains.

\begin{proposition}[\cite{MMM08}, Theorem 1.5 and Theorem 4.1] \label{Poincarelemma}
Let $U \subseteq \Rd$ be a bounded Lipschitz star-shaped domain. Then the following statements hold:
\begin{itemize}
\item For  $1 \leq r \leq d$ (resp. $0 \leq r \leq d - 1$), given a differential form $f \in L^2 \Lambda^r ( U)$, the problem
\begin{equation} \label{e.idhanbis}
 \left\lbrace
       \begin{array}{l}
         \di u = f \mbox{ in } U, \\
         u \in H^1_\di \Lambda^{r-1} (U), \\
       \end{array}
 \right.
\qquad \mbox{resp.} \qquad
\left\lbrace
    \begin{array}{l}
   \delta u = f \mbox{ in } U, \\
    u \in H^1_\delta \Lambda^{r+1} (U), \\
    \end{array}
    \right.
\end{equation}
has a solution if and only if the form $f$ satisfies $\di f = 0$ (resp. $\delta f = 0 $). In this case, there exist a constant $C(U) < \infty$ and a solution $u$ of the equation~\eqref{e.idhanbis} which belongs to the space $H^1 \Lambda^{r-1} (U)$ (resp. to the space $H^1 \Lambda^{r+1} (U)$) and satisfies
\begin{equation*}  
\| u \|_{ H^1 \Lambda^{r-1} (U)} \leq C \| f \|_{ L^2 \Lambda^r ( U)}  \hspace{3mm} \mbox{resp. } \| u \|_{ H^1 \Lambda^{r+1} (U)} \leq C \| f \|_{ L^2 \Lambda^r ( U)}.
\end{equation*}
\item For $1 \leq r \leq d-1$, given a differential form $f \in L^2 \Lambda^r (U)$, the problem
\begin{equation}\label{e.idhan}
 \left\lbrace
       \begin{array}{l}
        	\di u = f \mbox{ in } U, \\
		u \in H^1_{\di, 0} \Lambda^{r-1} (U),
       \end{array}
 \right.
\qquad \mbox{resp.} \qquad
\left\lbrace
    \begin{array}{l}
   \delta u = f \mbox{ in } U, \\
    u \in H^1_{\delta , 0} \Lambda^{r+1} (U), \\
    \end{array}
    \right.
\end{equation}
has a solution if and only if $f$ satisfies $\di f = 0$ in $U$ and $\mathbf{t} f = 0$ on $\partial U$ (resp. $\delta f = 0$ in $U$ and $\mathbf{n} f = 0$ on $\partial U$).
In this case, there exist a constant $C( U) < \infty$ and a solution $u$ of the equation~\eqref{e.idhan} which belongs to the space $H^1 \Lambda^{r-1} (U)$ (resp. to the space $ H^1 \Lambda^{r+1} (U)$) and satisfies
\begin{equation} \label{estimate3.2}
\| u \|_{ H^1\Lambda^{r-1} (U)} \leq C \| f \|_{ L^2 \Lambda^r ( U)}  \hspace{3mm} \mbox{resp. } \| u \|_{ H^1 \Lambda^{r+1} (U)} \leq C \| f \|_{ L^2 \Lambda^r ( U)}.
\end{equation}
\item For $r = d$ (resp. $r = 0$), given a differential form $f \in L^2 \Lambda^r (U)$, the problem
\begin{equation*}
 \left\lbrace
       \begin{array}{l}
        	\di u = f \mbox{ in } U,\\
		u \in H^1_{\di, 0} \Lambda^{d-1} (U),
       \end{array}
 \right.
\qquad \mbox{resp.} \qquad
\left\lbrace
    \begin{array}{l}
   \delta u = f \mbox{ in } U, \\
    u \in H^1_{\delta , 0} \Lambda^{1} (U), \\
    \end{array}
    \right.
\end{equation*}
has a solution if and only if $f$ satisfies
$\int_U f = 0$ (resp. $\int_U \star f = 0).$
Moreover there exists a solution $u \in  H^1 \Lambda^{d-1} (U)$ (resp. $u \in  H^1 \Lambda^{1} (U)$) which satisfies the estimate~\eqref{estimate3.2}.
\end{itemize}
\end{proposition}

\subsection{The Hodge-Morrey Decomposition Theorem.} In this section, we record the Hodge-Morrey Decomposition Theorem. This requires to introduce the subspaces of exact, co-exact and harmonic forms.
\begin{definition}
For each open set $U \subseteq \Rd$ and each integer $r \in \{ 1 , \ldots , d \}$, we denote by $\mathcal{E}^r(U)$ the subset of exact $r$-forms with vanishing tangential trace, i.e.,
\begin{equation*}
\mathcal{E}^r(U) := \left\{ u \in  H^1_\di\Lambda^r(U) ~ : ~ \exists \alpha \in H^1_{\di,0} \Lambda^{r-1}(U) ~ \mbox{such that} ~ \di \alpha = u \right\} \subseteq C^r_\di (U),
\end{equation*}
by $\mathcal{C}^r (U)$ the subset of co-exact $r$-forms with vanishing normal trace $\mathcal{C}^r(U)$, i.e.,
\begin{equation*}
\mathcal{C}^r (U) := \left\{ v \in  H^1_\di\Lambda^r(U) ~ : ~ \exists  \beta \in H^1_{\delta} \Lambda^{r+1}(U) ~ \mbox{such that} ~ \delta \beta = v  \right\} \subseteq C^r_\delta (U),
\end{equation*}
and by $\mathcal{H}^r (U)$ the subset of $r$ harmonic forms, i.e.,
\begin{equation*}
\mathcal{H}^r (U) := \left\{ w \in  L^2\Lambda^r(U) ~ : ~ \di w = 0 ~ \mbox{and} ~ \delta w = 0\right\}.
\end{equation*}
\end{definition}

We now state the Hodge decomposition Theorem. This theorem is stated for two types of bounded domains, the convex domains in which case the situation is simple and the result can be deduced from Proposition~\ref{Poincarelemma}, and the smooth domains. In the latter case the proof is more involved and we refer to~\cite[Theorem 2.4.2]{Sch95} for the demonstration.

\begin{proposition}[Hodge-Morrey Decomposition, Theorem 2.4.2 of \cite{Sch95}] \label{hodgedecomp}
Let $U$ be a domain of $\Rd$. We assume that this domain is either convex or smooth. Then for each integer $r \in \{1, \ldots, d\}$, 
\begin{enumerate}
\item[\textit{(i)}] the spaces $\mathcal{E}^r(U)$, $\mathcal{C}^r (U)$ and $\mathcal{H}^r (U)$ are closed in the $L^2 \Lambda^r(U)$-topology;
 \item[\textit{(ii)}] one has the orthogonal decomposition
\begin{equation*}
L^2 \Lambda^r (U) = \mathcal{E}^r(U)  \overset{\perp}{\oplus} \mathcal{C}^r (U) \overset{\perp}{\oplus} \mathcal{H}^r (U).
\end{equation*}
\end{enumerate}
\end{proposition}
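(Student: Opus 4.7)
The statement for smooth domains is covered by the reference to Schwarz's book, so my plan targets the convex case, which the paper says should follow from Proposition \ref{Poincarelemma}. The overall strategy splits into three pieces: (i) pairwise orthogonality of $\mathcal{E}^r(U)$, $\mathcal{C}^r(U)$, $\mathcal{H}^r(U)$; (ii) $L^2$-closedness of each of these three subspaces; (iii) the orthogonal sum exhausts $L^2\Lambda^r(U)$.

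\textbf{Orthogonality.} Take $u = \di \alpha$ with $\alpha \in H^1_{\di,0}\Lambda^{r-1}(U)$ and $v = \delta \beta$ with $\beta \in H^1_{\delta,0}\Lambda^{r+1}(U)$. Approximating $\alpha$ by compactly supported smooth forms in the $H^1_\di$-norm and $\beta$ analogously in $H^1_\delta$, the defining integration-by-parts relation \eqref{IPPdi} (and its analogue for $\delta$) gives $\langle \di\alpha,\delta\beta\rangle_U = 0$ in the limit. For $w \in \mathcal{H}^r(U)$, the same argument yields $\langle \di \alpha,w\rangle_U = (-1)^?\langle \alpha,\delta w\rangle_U = 0$ and likewise $\langle \delta\beta,w\rangle_U = 0$, because both $\delta w=0$ and $\di w=0$. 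Thus the three subspaces are mutually orthogonal in $L^2\Lambda^r(U)$.

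\textbf{Closedness.} Suppose $f_n := \di \alpha_n \to u$ in $L^2\Lambda^r(U)$ with $\alpha_n \in H^1_{\di,0}\Lambda^{r-1}(U)$. Because $\di \circ \di = 0$, each $f_n$ is closed, so $\di f_n = 0 = \di u$ in the distributional sense after passing to the limit, and consequently $f_n \to u$ in $H^1_\di\Lambda^r(U)$. Using the density of $C^\infty_c$ in $H^1_{\di,0}$ (Proposition \ref{propdensity}) together with the continuity of the tangential trace (Proposition \ref{tangentialtrace}), each $\mathbf{t}(\di \alpha_n)$ vanishes, and taking the limit gives $\mathbf{t} u = 0$. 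Now apply Proposition \ref{Poincarelemma}: since $u \in L^2\Lambda^r(U)$ with $\di u = 0$ and $\mathbf{t} u = 0$, there exists $\alpha \in H^1_{\di,0}\Lambda^{r-1}(U)$ with $\di\alpha = u$, so $u\in\mathcal{E}^r(U)$. The argument for $\mathcal{C}^r(U)$ is identical after interchanging $\di\leftrightarrow\delta$ and using the $\delta$-version of Proposition \ref{Poincarelemma}. Closedness of $\mathcal{H}^r(U)$ is immediate: weak convergence preserves both $\di w_n=0$ and $\delta w_n=0$.

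\textbf{Decomposition.} Because $\mathcal{E}^r(U)$ and $\mathcal{C}^r(U)$ are mutually orthogonal closed subspaces of the Hilbert space $L^2\Lambda^r(U)$, their sum $V := \mathcal{E}^r(U) \oplus \mathcal{C}^r(U)$ is closed. For any $u \in L^2\Lambda^r(U)$, let $w$ be its orthogonal projection onto $V^\perp$; then $u - w \in V$ splits uniquely as $\di\alpha + \delta\beta$ with the required primitives. It remains to check $w \in \mathcal{H}^r(U)$. For every $\varphi \in C^\infty_c\Lambda^{r-1}(U)$, we have $\di\varphi \in \mathcal{E}^r(U)$, whence $\langle w,\di\varphi\rangle_U = 0$: this says precisely $\delta w = 0$ in the distributional sense and $w \in H^1_\delta \Lambda^r(U)$. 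Similarly, testing against $\delta\psi$ for $\psi \in C^\infty_c\Lambda^{r+1}(U)$ gives $\di w = 0$. Hence $w$ is harmonic, completing the decomposition. The boundary cases $r=0$ and $r=d$ are handled by the appropriate bullets of Proposition \ref{Poincarelemma} (the mean-zero constraint on $f$), and the associated boundary forms require only trivial modifications.

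The main delicate point is the traces: one must verify that an element $\di\alpha$ produced from $\alpha \in H^1_{\di,0}$ has $\mathbf{t}(\di\alpha)=0$ in the appropriate $H^{-1/2}$-sense, despite $\di\alpha$ only lying in $H^1_\di$ rather than $H^1$. I expect this to be the most subtle step, but the continuity statement in Proposition \ref{tangentialtrace} combined with the density result in Proposition \ref{propdensity} should close it cleanly.
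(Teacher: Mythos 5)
Your proposal is correct and fills in, with care, exactly the details that the paper elides: the paper only remarks that the convex case ``can be deduced from Proposition~\ref{Poincarelemma}'' and defers the smooth case to~\cite{schwarz2006hodge}, and your argument is precisely the intended deduction. The one subtlety you correctly flag and resolve --- that $\di\alpha$ for $\alpha\in H^1_{\di,0}\Lambda^{r-1}(U)$ has vanishing tangential trace, even though it only lies in $H^1_\di\Lambda^r(U)$ --- does work as you describe, since $\di(\di\phi_k)=0=\di(\di\alpha)$ upgrades the $L^2$-convergence $\di\phi_k\to\di\alpha$ to $H^1_\di$-convergence, after which continuity of $\mathbf{t}$ from Proposition~\ref{tangentialtrace} applies; everything else (orthogonality by integration by parts through $C^\infty_c$ approximants, closedness of $\mathcal{E}^r$ and $\mathcal{C}^r$ from Proposition~\ref{Poincarelemma}, and the identification $V^\perp=\mathcal{H}^r(U)$ by testing against $\di\varphi$ and $\delta\psi$) is standard and sound. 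A minor remark: closedness of $\mathcal{H}^r(U)$ is already a consequence of your decomposition step since $\mathcal{H}^r(U)=V^\perp$ is automatically closed, so your separate weak-convergence argument is redundant, though not wrong.
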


\section{Functional inequalities and differential forms} \label{section4}

The goal of this section is to prove some functional inequalities which are necessary in the proof of Theorem~\ref{maintheorem}. We first deduce from the results of the previous section the Poincar\'e inequality for differential forms on convex or smooth bounded domains of $\Rd$, Proposition~\ref{PoincareineqH10conv}. We then state, without proof, the Gaffney-Friedrichs inequality for convex or smooth bounded domains of $\Rd$. We deduce from these propositions the multiscale Poincar\'e inequality, Proposition~\ref{multscalepoincare}. We finally conclude this section by stating the Caccioppoli inequality for differential forms. 

\subsection{The Poincar\'e inequality.} The goal of this section is to generalize the Poincar\'e inequalities to the setting of differential forms.

\begin{proposition}[Poincar\'e inequalities] \label{PoincareineqH10conv} 
Let $U$ be a bounded domain of $\Rd$ which is either smooth or convex. Then there exists a constant $C := C(U) < \infty$, such that for each integer $r \in \{1, \ldots, d \}$ and for each form $v \in H^1_{\di,0}\Lambda^r(U)$,
\begin{equation} \label{Poincareineqform}
\inf_{\alpha \in C_{\di,0}^r(U)} \|v  - \alpha \|_{L^2 \Lambda^r(U)} \leq C \|\di v \|_{L^2 \Lambda^{r+1}(U)},
\end{equation}
and for each form $w \in H^1_\di\Lambda^r(U)$,
\begin{equation} \label{PoincareWineqform}
\inf_{\alpha \in C_\di^r(U)} \|w  - \alpha \|_{L^2 \Lambda^r(U)} \leq C \|\di w \|_{L^2 \Lambda^{r+1}(U)}.
\end{equation}
Moreover, the constant $C$ has the following scaling property, for each $\lambda > 0$, $C(U) = \lambda C( \lambda^{-1} U).$
\end{proposition}

\begin{proof}
We first notice that both estimates are simple when $r = d$ since in that case $ C_\di^r(U) = H^1_\di \Lambda^d (U)$. From now on, we assume $0 \leq r \leq d-1$. In the case when $U$ convex, both inequalities~\eqref{Poincareineqform} and~\eqref{PoincareWineqform} are a consequence of Proposition~\ref{Poincarelemma}. In the case when $U$ is smooth, the proof can be split into two steps:
\begin{itemize}
\item In Step 1, we prove that the space
$
\left\{ u \in L^2 \Lambda^{r+1}(U) ~:~ \exists \alpha \in H^1_{\di } \Lambda^r (U) ~ \mbox{such that} ~ u = \di \alpha \right\}
$
is closed in the $L^2\Lambda^{r+1}(U)$-topology;
\item In Step 2, we deduce the estimates~\eqref{Poincareineqform} and~\eqref{PoincareWineqform} from Step 1 and Proposition~\ref{hodgedecomp}.
\end{itemize}

\textit{Step 1.} The argument relies on a decomposition of the space $\mathcal{H}^{r+1}(U)$ of harmonic forms, called the Friedrichs decomposition. By~\cite[Theorem 2.4.8]{Sch95}, we have the following orthogonal decomposition,
\begin{equation*}
\mathcal{H}^{r+1}(U) = \left( \mathcal{H}^{r+1}(U) \cap H^1_{\delta , 0} \Lambda^{r+1} (U) \right) \overset{\perp}{\oplus} \left\{ u \in \mathcal{H}^{r+1}(U) ~:~ \exists \alpha \in  H^1_{\di} \Lambda^{r} (U) ~\mbox{such that}~ u = \di \alpha \right\}.
\end{equation*}
Combining this result with Proposition~\ref{hodgedecomp} shows that the space
\begin{multline*}
\left\{ u \in L^2 \Lambda^{r+1}(U) ~:~ \exists \alpha \in H^1_{\di } \Lambda^r (U) ~ \mbox{such that} ~ u = \di \alpha \right\} \\ = \mathcal{E}^r(U) \overset{\perp}{\oplus} \left\{ u \in \mathcal{H}^{r+1}(U) ~:~ \exists \alpha \in  H^1_{\di } \Lambda^{r} (U) ~\mbox{such that}~ u = \di \alpha \right\}
\end{multline*}
is closed for the $L^2\Lambda^{r+1}(U)$-topology.

\smallskip

\textit{Step 2.}
We first prove the inequality~\eqref{Poincareineqform}. By Proposition~\ref{hodgedecomp}, we know that the space $\mathcal{E}^{r}$ is closed in $L^2 \Lambda^{r+1}(U)$. This implies that the range of the linear operator
\begin{equation*}
\di : \left\{ \begin{aligned}
H^1_{\di, 0} \Lambda^{r} (U) &\rightarrow L^2 \Lambda^{r+1} (U), \\
u &\rightarrow \di u,
\end{aligned}  \right.
\end{equation*}
is closed. Thus, by the Open Mapping Theorem, see \cite[Theorem 2.6 and Corollary 2.7]{brezis2010functional}, there exists a constant $C(U) < \infty$ such that for each form $v \in H^1_{\di , 0} \Lambda^r(U)$,
\begin{equation*}
\inf_{\alpha \in \ker \di} \| v - \alpha \|_{L^2\Lambda^{r} (U)} \leq C \| \di v \|.
\end{equation*}
But one has $\ker \di = C_\di^r(U) \cap H^1_{\di , 0} \Lambda^r (U)$. This completes the proof of~\eqref{Poincareineqform}.

The proof of the estimate~\eqref{PoincareWineqform} is similar, the only difference is that we use Step 1, instead of Proposition~\ref{hodgedecomp}, to obtain that the set
$
\left\{ u \in L^2 \Lambda^{r+1}(U) ~:~ \exists \alpha \in H^1_{\di } \Lambda^r (U) ~ \mbox{such that} ~ u = \di \alpha \right\}
$
is closed in the $L^2\Lambda^{r+1}(U)$-topology.

The scaling of the constant can be deduced by applying the change of variables $x \rightarrow \lambda x$.
\end{proof}

\subsection{The Gaffney-Friedrichs inequality.} This section is devoted to the statement of the Gaffney-Friedrichs inequality (originally due to~\cite{gaffney1951harmonic, friedrichs1955differential}). Let us first introduce the space of harmonic forms with Dirichlet boundary condition in a bounded domain $U$,
\begin{equation*}
\mathcal{H}_{D}^r(U) := \left\{ u \in L^2 \Lambda^r (U) ~: ~ \di u = 0, ~ \delta u = 0~ \mbox{and}~ \mathbf{t} u = 0 ~\mbox{on}~ \partial U \right\}.
\end{equation*} 
The Gaffney-Friedrichs inequality states that if a differential form satisfies $\di u \in L^2 \Lambda^{r+1} \left( U \right)$, $\di u \in L^2 \Lambda^{r-1} \left( U \right)$, $\mathbf{t}u = 0$ on the boundary $\partial U$ and is orthogonal to the space $\mathcal{H}_{D}^r$, then it belongs to the Sobolev space $H^1$, and the $L^2$-norm of its gradient can be estimated by the $L^2$-norms of the forms $\di u$ and $\delta u$. The proof of the version of the inequality stated below can be found in~\cite[Proposition 2.2.3 and Theorem 5.5]{Sch95}.

\begin{proposition}[Gaffney-Friedrichs inequality for smooth and convex domains] \label{GFCDsmooth}
Let $U$ be a bounded domain of $\Rd$ which is either smooth or convex. There exists a constant $C := C(d,U) < \infty$ such that for any form $u \in L^2\Lambda^r(U)$ satisfying $d u \in L^2\Lambda^{r+1}(U)$, $\delta u \in L^2\Lambda^{r-1}(U)$, $\mathbf{t} u = 0 $ on $\partial U$ and $u \in \left( \mathcal{H}_{D}^r(U)\right)^\perp$, one has $u \in H^1 \Lambda^r (U)$ and
\begin{equation*}
\| \nabla u \|_{L^2 \Lambda^r (U)} \leq C \left( \| \di u \|_{L^2 \Lambda^{r+1} (U)} + \| \delta u \|_{L^2  \Lambda^{r-1} (U)} \right).
\end{equation*}
\end{proposition}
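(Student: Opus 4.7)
The plan is to deduce the stated estimate from the full Gaffney-Friedrichs inequality
\begin{equation*}
\|\omega\|_{H^1 \Lambda^r (U)} \leq C \left( \| \di \omega \|_{L^2 \Lambda^{r+1} (U)} + \| \delta \omega \|_{L^2 \Lambda^{r-1} (U)} + \| \omega \|_{L^2 \Lambda^{r} (U)} \right),
\end{equation*}
already recalled in~\eqref{GFCDe} as motivation, by absorbing the last term on the right using the hypothesis $\omega \perp \mathcal{H}_D^r$ together with the finite dimensionality of $\mathcal{H}_D^r$. Concretely, it suffices to prove the auxiliary bound
\begin{equation*}
\| \omega \|_{L^2 \Lambda^r(U)} \leq C \left( \| \di \omega \|_{L^2 \Lambda^{r+1} (U)} + \| \delta \omega \|_{L^2 \Lambda^{r-1} (U)} \right)
\end{equation*}
for every $\omega$ in the admissible class; reinserting this estimate into the full Gaffney-Friedrichs inequality and using $\| \nabla \omega \|_{L^2} \leq \| \omega \|_{H^1}$ will yield the conclusion.

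To establish the auxiliary bound I would argue by contradiction and compactness. Assuming it fails, one produces a sequence $(\omega_n)$ with $\mathbf{t} \omega_n = 0$, $\omega_n \in (\mathcal{H}_D^r)^\perp$, $\|\omega_n\|_{L^2} = 1$, and $\|\di \omega_n\|_{L^2} + \|\delta \omega_n\|_{L^2} \to 0$. The full Gaffney-Friedrichs inequality then bounds $(\omega_n)$ in $H^1 \Lambda^r(U)$, so the Rellich-Kondrachov theorem yields a subsequence, still denoted $(\omega_n)$, converging strongly in $L^2$ and weakly in $H^1$ to some $\omega$ with $\|\omega\|_{L^2}=1$. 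Passing to the limit distributionally gives $\di \omega = 0$ and $\delta \omega = 0$; the continuity of the tangential trace on $H^1_\di \Lambda^r(U)$ (Proposition~\ref{tangentialtrace}), together with the convergence $\omega_n \to \omega$ in the $H^1_\di$ norm coming from $\di \omega_n \to 0 = \di \omega$ in $L^2$, forces $\mathbf{t}\omega = 0$. Hence $\omega \in \mathcal{H}_D^r$. On the other hand, strong $L^2$ convergence preserves orthogonality to the finite-dimensional subspace $\mathcal{H}_D^r$, so $\omega \in (\mathcal{H}_D^r)^\perp$ as well, forcing $\omega = 0$ and contradicting $\|\omega\|_{L^2}=1$.

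The main delicate point is the passage to the limit of the boundary condition $\mathbf{t} \omega_n = 0$: this is where the smoothness of $U$ genuinely enters, through the trace theorem of Proposition~\ref{tangentialtrace}, and where we use that the convergence of $\omega_n$ takes place in $H^1_\di \Lambda^r(U)$ rather than merely in $L^2$. Everything else is a direct compactness routine combining the full Gaffney-Friedrichs inequality with the finite dimensionality of $\mathcal{H}_D^r$ (established in~\cite{schwarz2006hodge}), which in turn guarantees that the orthogonality constraint is stable under strong $L^2$ limits.
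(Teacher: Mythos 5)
The paper does not actually supply a proof of this proposition: it cites \cite[Proposition 2.2.3]{schwarz2006hodge} without argument. Your proposal supplies a self-contained proof, and it is correct. The structure is the standard one for absorbing the zeroth-order term in a Gaffney–Friedrichs-type estimate: first reduce to the auxiliary bound
\begin{equation*}
\| \omega \|_{L^2 \Lambda^r(U)} \leq C \left( \| \di \omega \|_{L^2 \Lambda^{r+1} (U)} + \| \delta \omega \|_{L^2 \Lambda^{r-1} (U)} \right)
\end{equation*}
on $\left\{ \mathbf{t}\omega = 0 \right\} \cap (\mathcal{H}_D^r)^\perp$, then run a contradiction/compactness argument using \eqref{GFCDe} and Rellich–Kondrachov. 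All the individual steps check out: the $H^1$ bound makes $(\omega_n)$ precompact in $L^2$, the weak limit is closed, co-closed, and (by convergence in the $H^1_\di$ norm plus the continuity of the tangential trace from Proposition~\ref{tangentialtrace}) has vanishing tangential trace, so it lies in $\mathcal{H}_D^r$; strong $L^2$ convergence transfers orthogonality to $\mathcal{H}_D^r$; hence the limit is zero, contradicting $\|\omega\|_{L^2}=1$. You are also right to single out the passage of $\mathbf{t}\omega_n = 0$ to the limit as the step where the $H^1_\di$-convergence (rather than mere $L^2$-convergence) is essential, and where the smooth-domain trace theorem enters.

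One small clarification: the orthogonality constraint $\omega_n \perp \mathcal{H}_D^r$ passes to the strong (indeed even weak) $L^2$ limit because $\mathcal{H}_D^r$ is a closed subspace of $L^2\Lambda^r(U)$, for which finite dimensionality is sufficient but not necessary; the finite dimensionality of $\mathcal{H}_D^r$ is more pertinent as background context (it is what makes the failure of \eqref{GFCDe1} a genuinely finite-dimensional obstruction). This is a matter of phrasing, not a gap. Compared to what the paper does — pointing to \cite{schwarz2006hodge} — your argument buys self-containment at the modest cost of invoking Rellich–Kondrachov, and is the textbook route one would expect to find behind the cited reference.
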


\begin{remark}
In the case when the domain $U$ is convex, one has $\mathcal{H}_{D}^r(U) = \left\{ 0 \right\}$ and the restriction $u \in \left( \mathcal{H}_{D}^r(U)\right)^\perp$ is vacuous.
\end{remark}

\subsection{The multiscale Poincar\'e inequality.}
Another ingredient needed in the proof of Theorem~\ref{maintheorem} is the multiscale Poincar\'e inequality stated in Proposition~\ref{multscalepoincare} below. We first define the mean of a form on a cube.
\begin{definition}
Given a cube $\cu$ of $\Rd$, an integer $r \in \{0 , \ldots , d\}$ and a form $\alpha = \sum_{|I| = r} \alpha_I \di x_I \in L^2 \Lambda^r(\cu)$. We denote by
$
\left( \alpha \right)_\cu :=  \sum_{|I| = r} \left( \fint_\cu \alpha_I (x) \, dx \right) \di x_I \in \Lambda^{r}(\Rd).
$
\end{definition}
The multiscale Poincar\'e inequality is stated in the following proposition.
\begin{proposition}[Multiscale Poincar\'e]  \label{multscalepoincare}
Fix $m \in \N$ and, for each $0 \leq r \leq d$, each $n \in \N$, $n \leq m$, define $\mathcal{Z}_{m,n}:= 3^n \Zd \cap \cu_m$. There exists a constant $C(d) < \infty$ such that, for every form %$u \in C^r_\di(\cu_m)^\perp$,
\begin{equation*}
\| u  \|_{\underline{L}^2(\cu_m)}  \leq C \| \di u \|_{\underline{L}^2(\cu_m)} + C \sum_{n = 0 }^{m-1} 3^n \left( \left| \mathcal{Z}_{m,n} \right|^{-1} \sum_{z \in \mathcal{Z}_{m,n}}  \left| (\di u)_{z + \cu_n}\right|^2 \right)^{\frac 12}.
\end{equation*}
\end{proposition}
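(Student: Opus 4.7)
The plan is to adapt the telescopic proof of the multiscale Poincar\'e inequality from the scalar setting (see e.g.~\cite[Proposition~1.7]{armstrong2017quantitative}) to forms, using Proposition~\ref{formpoincareW} in place of the usual scalar Poincar\'e--Wirtinger inequality. The starting observation is that, since every constant $r$-form is closed, the hypothesis $u \in C^r_\di(\cu_m)^\perp$ forces $(u)_{\cu_m} = 0$ by testing $u$ against constants.

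For each $x \in \cu_m$ and each $0 \le n \le m$, let $\cu^n(x)$ denote the unique triadic cube $z + \cu_n$ with $z \in \mathcal{Z}_{m,n}$ containing $x$; in particular $\cu^m(x) = \cu_m$. The vanishing of $(u)_{\cu_m}$ gives the triadic telescoping decomposition
\begin{equation*}
u(x) \;=\; \bigl(u(x) - (u)_{\cu^0(x)}\bigr) \;+\; \sum_{n=0}^{m-1} \bigl((u)_{\cu^n(x)} - (u)_{\cu^{n+1}(x)}\bigr),
\end{equation*}
and by Minkowski's inequality in $L^2(\cu_m)$ the claim reduces to controlling the $L^2$-norm of the unit-scale fluctuation and of each telescope difference.

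The unit-scale term $\|u - (u)_{\cu^0(\cdot)}\|_{\underline{L}^2(\cu_m)}$ is bounded by $C\|\di u\|_{\underline{L}^2(\cu_m)}$ using Proposition~\ref{formpoincareW} on each unit cube $z+\cu_0$: writing $u|_{z+\cu_0} = \alpha_z + w_z$, where $\alpha_z$ is the $L^2$-orthogonal projection onto $C^r_\di(z+\cu_0)$ and $w_z := u - \alpha_z$, the containment of constant forms in $C^r_\di(z+\cu_0)$ yields $(w_z)_{z+\cu_0} = 0$, so $\|w_z\|_{L^2(z+\cu_0)} \le C\|\di u\|_{L^2(z+\cu_0)}$ by Proposition~\ref{formpoincareW}. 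For each telescope difference on a pair $(Q',Q) = (\cu^n(x),\cu^{n+1}(x))$, the identity $(u)_{Q'} - (u)_Q = \fint_{Q'}\bigl(u - (u)_Q\bigr)\,\di x$ (valid componentwise for forms) allows one to decompose $u$ on $Q$ as $u = L_Q + v_Q$, where $L_Q$ is an affine primitive of the constant $(r+1)$-form $(\di u)_Q$ (constructed via the homotopy operator underlying Proposition~\ref{Poincarelemma}) and $v_Q$ satisfies $\di v_Q = \di u - (\di u)_Q$. Applying Proposition~\ref{formpoincareW} to $v_Q$ on $Q$, the telescope difference is bounded by $C\,3^n\,|(\di u)_Q|$ plus a sub-scale fluctuation that can be reabsorbed into the finer-scale averages or into the base term.

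The main obstacle is that Proposition~\ref{formpoincareW} produces only $\inf_{\alpha \in C^r_\di(Q)}\|u-\alpha\|_{L^2(Q)} \le C\,\size(Q)\,\|\di u\|_{L^2(Q)}$, whereas the scalar proof relies on the stronger assertion that this infimum is attained at the constant $(u)_Q$. Circumventing this requires the $L^2$-projection construction above, together with a careful bookkeeping of the closed parts $\alpha_z$ and $\alpha_Q$ across scales so that they either cancel against neighbouring contributions or are absorbed into $\|\di u\|_{\underline{L}^2(\cu_m)}$; this is the delicate step with no scalar analogue.
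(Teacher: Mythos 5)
Your telescoping strategy departs from the paper's proof, and the "delicate step" you flag at the end is not a technicality to be tidied up: it is a genuine breakdown. The paper avoids telescoping altogether and instead deduces the proposition in one line from Proposition~\ref{Poincareineq} (which gives $\|u\|_{\underline L^2\Lambda^r(\cu_m)} \le C\,\|\di u\|_{\underline H^{-1}\Lambda^{r+1}(\cu_m)}$ for $u \in C^r_\di(\cu_m)^\perp$, proved via the Neumann problem of Lemma~\ref{lemmaneumannpb} and the Gaffney--Friedrichs inequality on the single cube $\cu_m$) together with the scalar multiscale inequality (Proposition~\ref{multpoincarepart1}) applied componentwise to $f = \di u$. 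The form-specific difficulty is thus confined to one cube, where the orthogonality hypothesis actually applies.

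The concrete gap in your argument is already visible in the unit-scale term, before any telescoping. On a unit cube $z+\cu_0$ you write $u = \alpha_z + w_z$ with $\alpha_z$ the $L^2$-projection onto $C^r_\di(z+\cu_0)$, and correctly observe that $(w_z)_{z+\cu_0}=0$ and $\|w_z\|_{L^2(z+\cu_0)} \le C\|\di u\|_{L^2(z+\cu_0)}$. But then $u - (u)_{z+\cu_0} = (\alpha_z - (\alpha_z)_{z+\cu_0}) + w_z$, and the piece $\alpha_z - (\alpha_z)_{z+\cu_0}$ is the fluctuation of a closed form: it carries \emph{no} $\di u$ and can be arbitrarily large. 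For $r\ge 1$ the space $C^r_\di(z+\cu_0)$ contains far more than the constant forms (e.g.\ $\di\varphi$ for any $(r-1)$-form $\varphi$), so projecting onto it does not reduce to subtracting the mean, and $(u)_{Q'} - (u)_Q$ on subcubes cannot be bounded by $\di u$ alone. The global hypothesis $u\in C^r_\di(\cu_m)^\perp$ does not rescue this, because restricting $u$ to a subcube $Q$ does not put it in $C^r_\di(Q)^\perp$. Your closing hope that the closed parts $\alpha_z,\alpha_Q$ "cancel against neighbouring contributions or are absorbed" is precisely what fails: nothing forces the $L^2$-projections of $u$ onto the (infinite-dimensional, cube-dependent) spaces $C^r_\di(Q)$ to agree across scales or across adjacent cubes, and this is exactly why the paper switches to the $\underline H^{-1}$ norm and pushes the multiscale structure down to the scalar components of $\di u$.
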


\iffalse
To prove this estimate, we first need to introduce the following $H^{-1}$-norm for cubes.
\begin{definition} \label{defH-1bar}
For each cube $\cu$ of $\Rd$ and each form $\omega \in L^2 \Lambda^r(\cu)$, we define the following $H^{-1}$-norm
\begin{equation*}
\| \omega \|_{\underline{H}^{-1} \Lambda^r (\cu)} :=  \sup \left\{ \frac{1}{|\cu|} \left\langle \omega , \alpha \right\rangle_\cu \, : \, \alpha \in H^1 \Lambda^r (\cu), \,  \size (\cu)^{-1} \left| \left( \alpha \right)_\cu \right| + \| \nabla \alpha\|_{\underline{L}^2 \Lambda^r (\cu)}  \leq 1 \right\}.
\end{equation*}
By the Poincar\'e-Wirtinger inequality, there exists a constant $C(d) < \infty$ such that,
\begin{equation*}
\| \omega \|_{\underline{H}^{-1} \Lambda^r (\cu)}  \leq C \size(\cu) \| \omega \|_{\underline{L}^2 \Lambda^r (\cu)}.
\end{equation*}
\end{definition}
\fi

The multiscale Poincar\'e inequality is a consequence of the following improved version of the Poincar\'e-Wirtinger inequality, for which we recall the definition of the $\underline{H}^{-1}\Lambda^r$-norm stated in Section~\ref{sectionH-1norms}. The specific case $r = 0$ of this statement can be found in~\cite[Lemma~1.9]{armstrong2017quantitative}.

\begin{proposition} \label{Poincareineq}
There exists a constant $C := C(d) < \infty$ such that for every cube $\cu$ of $\Rd$, every integer $r \in \{1 , \ldots, d\}$ and every form $u \in H^1_\di \Lambda^r (\cu),$ one has the estimate
\begin{equation*}
\inf_{\alpha \in C^r_\di (\cu)}  \| u- \alpha \|_{\underline{L}^2 \Lambda^{r}(\cu)} \leq C \| \di u \|_{\underline{H}^{-1}\Lambda^{r+1}(\cu)}.
\end{equation*}
\end{proposition}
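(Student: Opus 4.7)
The plan is to combine the Hodge--Morrey decomposition (to isolate the co-exact part of $u$) with a duality argument in which we test $\di u$ against a potential $\xi$ solving $\delta \xi = u_C$.

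\emph{Step 1 (Decomposition).} Since the cube $\cu$ is convex, Proposition~\ref{hodgedecomp} yields the orthogonal splitting $u = u_E + u_C + u_H$ with $u_E \in \mathcal{E}^r(\cu)$, $u_C \in \mathcal{C}^r(\cu)$ and $u_H \in \mathcal{H}^r(\cu)$. Both $u_E$ (exact) and $u_H$ (harmonic) are closed, so $\alpha := u_E + u_H$ is an admissible competitor in $C^r_\di(\cu)$ and $v := u - \alpha = u_C$ satisfies $\di v = \di u$. It therefore suffices to prove the pointwise-in-$\omega$ bound $\| v \|_{\underline{L}^2\Lambda^r(\cu)} \leq C \| \di u \|_{\underline{H}^{-1}\Lambda^{r+1}(\cu)}$.

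\emph{Step 2 (Compatibility conditions for $v$).} Next I would verify that $\delta v = 0$ and $\mathbf{n} v = 0$. Writing $v = \delta \beta_0$ with $\beta_0 \in H^1_{\delta, 0}\Lambda^{r+1}(\cu)$, the first is immediate from $\delta \circ \delta = 0$. For the second, approximate $\beta_0$ by $\beta_n \in C^\infty_c \Lambda^{r+1}(\cu)$ in $H^1_\delta$; each $\delta \beta_n \in C^\infty_c \Lambda^r(\cu)$ has vanishing normal trace, satisfies $\delta(\delta \beta_n) = 0$, and converges to $v$ in $L^2$, hence also in $H^1_\delta$. The continuity of the normal-trace map from Proposition~\ref{tangentialtrace} forces $\mathbf{n} v = 0$.

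\emph{Step 3 (Potential and duality).} With these compatibility conditions in hand, Proposition~\ref{Poincarelemma} (the $\delta$ version of the second bullet when $1 \leq r \leq d-1$; the third bullet when $r = 0$, noting that $\int_\cu v = 0$ because $1 \in \mathcal{H}^0$ is orthogonal to $v$) produces $\xi \in H^1_{\delta,0}\Lambda^{r+1}(\cu) \cap H^1 \Lambda^{r+1}(\cu)$ with $\delta \xi = v$ and, after rescaling the domain to a unit cube,
\begin{equation*}
\| \xi \|_{L^2\Lambda^{r+1}(\cu)} + \size(\cu)\, \| \nabla \xi \|_{L^2\Lambda^{r+1}(\cu)} \leq C\, \size(\cu)\, \| v \|_{L^2\Lambda^r(\cu)}.
\end{equation*}
The central computation is then
\begin{equation*}
\| v \|_{L^2(\cu)}^2 = \langle v, \delta \xi \rangle_\cu = \langle \di v, \xi \rangle_\cu = \langle \di u, \xi \rangle_\cu,
\end{equation*}
the middle equality being the $\di$--$\delta$ integration-by-parts formula \eqref{IPPdi}, extended by density to $\xi \in H^1_{\delta,0}$ and legitimate because $\mathbf{n} \xi = 0$. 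Definition~\ref{defH-1bar} gives
\begin{equation*}
\langle \di u, \xi \rangle_\cu \leq |\cu|\, \| \di u \|_{\underline{H}^{-1}\Lambda^{r+1}(\cu)} \left( \size(\cu)^{-1} |(\xi)_\cu| + \| \nabla \xi \|_{\underline{L}^2\Lambda^{r+1}(\cu)} \right),
\end{equation*}
and using $|(\xi)_\cu| \leq \| \xi \|_{\underline{L}^2}$ together with the rescaled estimates on $\xi$, both parenthesized terms are dominated by $C \| v \|_{\underline{L}^2\Lambda^r(\cu)}$. This yields $\| v \|_{\underline{L}^2}^2 \leq C \| \di u \|_{\underline{H}^{-1}} \| v \|_{\underline{L}^2}$, hence the claim. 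The case $r = d$ is trivial since then $H^1_\di \Lambda^d(\cu) = C^d_\di(\cu)$ and the left-hand side vanishes.

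The principal technical obstacle is Step 2: the mere co-exact representation $v = \delta \beta_0$ with $\beta_0 \in H^1_{\delta,0}$ does not immediately transmit boundary information to $v$, and the argument must pass through an approximation in $H^1_\delta$ together with the $H^1_\delta$-continuity of the normal-trace map. Once that is done, the rest is a direct combination of the Hodge-theoretic machinery recalled in Section~\ref{section3} with the duality definition of the $\underline{H}^{-1}$ norm.
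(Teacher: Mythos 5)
Your proof is correct, but it takes a genuinely different route from the paper's. The paper works directly with $u \in (C^r_\di(\cu))^\perp$ and solves the second-order Neumann problem $\delta \di w = u$, $\mathbf{n}\di w = 0$ (Lemma~\ref{lemmaneumannpb}, whose regularity part rests on the Gaffney--Friedrichs inequality for convex domains); it then tests the weak formulation with $u$ to get $\|u\|^2_{L^2} = \langle \di u, \di w\rangle_\cu$ and uses $\di w$ as the test field in Definition~\ref{defH-1bar}. A separate computation is then needed to bound $|(\di w)_\cu|$, which the paper handles by pairing the Neumann equation against the linear form $l_p$ from~\eqref{def.lp}. You instead pass through the Hodge--Morrey decomposition to reduce to the co-exact piece $v = u_C$, verify the compatibility conditions $\delta v = 0$, $\mathbf{n}v = 0$ by an $H^1_\delta$-density argument, and then solve the \emph{first-order} problem $\delta\xi = v$ via Proposition~\ref{Poincarelemma}. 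This trades the Neumann solve for a combination of the Hodge-theoretic splitting and the solvability of $\delta\xi = v$, but it buys you something: the mean $|(\xi)_\cu|$ is controlled for free by $\|\xi\|_{\underline{L}^2(\cu)}$ and the scaling of the estimate in Proposition~\ref{Poincarelemma}, so the extra $l_p$-testing step is not needed. Both routes rely on essentially the same underlying elliptic machinery (Gaffney--Friedrichs on the one hand, the $H^1$-regular solvability of $\delta\xi = v$ on the other), and both produce the claimed $C(d)$ constant by translation and dilation; your verification of the scaling of the potential bound is the one place where a reader would want the computation spelled out, but it does come out as you state.
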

The proof of Proposition~\ref{Poincareineq} relies on the following lemma.

\begin{lemma} \label{lemmaneumannpb}
There exists a constant $C := C(d) < \infty$ such that for each cube $\cu$ of $\Rd$, each integer $r \in \{1 , \ldots, d-1\}$ and each form $u \in C^r_\di(\cu)^\perp$, there exists a unique form $w \in H^1_\di \Lambda^r(\cu) \cap C^r_\di(\cu)^\perp$ solution of the Neumann problem
\begin{equation} \label{Neumannpb}
\left\{ \begin{aligned}
\delta \di w = u & \mbox{ in } \cu, \\
\mathbf{n} \di w = 0  & \mbox{ on } \partial \cu, \\
\end{aligned} \right.
\end{equation}
in the sense that, for each form $v \in H^1_\di \Lambda^{r} (U)$,
$
 \left\langle \di w, \di v \right\rangle_\cu = \left\langle u, v \right\rangle_\cu.
$
Moreover, the exterior derivative $\di w$ belongs to the space $H^1\Lambda^{r+1}(\cu) $ and satisfies the estimate
\begin{equation} \label{regNeumann}
\| \nabla \di w \|_{L^2 \Lambda^{r+1} (\cu)} \leq C \| u \|_{L^2 \Lambda^r (\cu)}.
\end{equation} 

\end{lemma}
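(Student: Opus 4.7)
The plan is to treat the Neumann problem \eqref{Neumannpb} as the Euler--Lagrange equation of the convex variational problem
\begin{equation*}
\text{minimize } F(w):= \tfrac12 \| \di w \|^2_{L^2 \Lambda^{r+1}(\cu)} - \langle u , w \rangle_{\cu} \quad \text{over } w \in H:= H^1_\di \Lambda^r(\cu) \cap C^r_\di(\cu)^\perp,
\end{equation*}
and to recover the regularity of $\di w$ from the Gaffney--Friedrichs inequality applied on the convex domain $\cu$. First, I would observe that $H$ is a closed subspace of $H^1_\di \Lambda^r(\cu)$ (since $C^r_\di(\cu)$ is closed in $L^2$) and hence a Hilbert space. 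By Proposition~\ref{formpoincareW} applied on the convex domain~$\cu$, for $w \in H$ the optimal $\alpha \in C^r_\di(\cu)$ in the Poincar\'e--Wirtinger inequality is the orthogonal projection onto $C^r_\di(\cu)$, which equals $0$ since $w \in C^r_\di(\cu)^\perp$. This gives the improved Poincar\'e estimate $\|w\|_{L^2\Lambda^r(\cu)} \leq C(\cu) \|\di w\|_{L^2\Lambda^{r+1}(\cu)}$, which in turn makes $F$ coercive and strictly convex on $H$. The direct method then yields a unique minimizer $w \in H$.

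Next, I would write the Euler--Lagrange equation, namely $\langle \di w, \di v\rangle_\cu = \langle u, v\rangle_\cu$ for all $v \in H$, and promote it to all of $H^1_\di \Lambda^r(\cu)$. For arbitrary $v \in H^1_\di \Lambda^r(\cu)$, split $v = v_0 + v_1$ orthogonally in $L^2\Lambda^r(\cu)$, with $v_0 \in C^r_\di(\cu)$ and $v_1 \in H$; since $\di v_0 = 0$ and $u \in C^r_\di(\cu)^\perp$, both sides are unchanged when $v$ is replaced by $v_1$, so the identity persists. Specializing to $v \in C^\infty_c \Lambda^r(\cu)$ shows that $\eta := \di w$ satisfies $\delta \eta = u$ weakly, hence $\eta \in H^1_\delta \Lambda^{r+1}(\cu)$ with $\delta \eta = u \in L^2$. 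Testing against $v \in H^1_\di \Lambda^r(\cu)$ without compact support and using the definition of the normal trace from Proposition~\ref{tangentialtrace} then yields $\mathbf{n} \eta = 0$ in $H^{-1/2}\Lambda^{r+1}(\partial \cu)$, since the pairing vanishes for arbitrary tangential traces $\mathbf{t} v$.

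Finally, the regularity estimate follows by applying Proposition~\ref{GFCD} (Gaffney--Friedrichs on convex domains) to $\eta = \di w$. The three hypotheses $\di \eta = 0$, $\delta \eta = u \in L^2$, and $\mathbf{n} \eta = 0$ on $\partial \cu$ are now in hand, so
\begin{equation*}
\|\nabla \di w\|_{L^2\Lambda^{r+1}(\cu)} = \|\nabla \eta\|_{L^2\Lambda^{r+1}(\cu)} \leq C \bigl( \| \di \eta \|_{L^2} + \| \delta \eta \|_{L^2} \bigr) = C \|u\|_{L^2\Lambda^r(\cu)}.
\end{equation*}
To remove the dependence of $C$ on the cube, I would perform the change of variables $x \mapsto \size(\cu)^{-1} x$: under this rescaling each of $\|\nabla \eta\|_{L^2}$, $\|\di \eta\|_{L^2}$, $\|\delta \eta\|_{L^2}$ scales by the same factor $\size(\cu)^{(d-2)/2}$, so the Gaffney--Friedrichs constant for a cube depends only on~$d$. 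This gives $C = C(d)$ as required.

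The main technical obstacle is identifying the natural boundary condition $\mathbf{n} \di w = 0$ from the variational formulation; once this is in hand, the regularity step is a direct application of Proposition~\ref{GFCD}, and the existence/uniqueness is standard Hilbert space theory once the Poincar\'e inequality on the closed subspace $H$ has been established.
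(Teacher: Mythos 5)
Your proposal is correct and follows essentially the same two-step strategy as the paper: solve the constrained variational problem on $H^1_\di \Lambda^r(\cu) \cap C^r_\di(\cu)^\perp$ using the Poincar\'e--Wirtinger inequality for coercivity, then apply Proposition~\ref{GFCD} (Gaffney--Friedrichs for convex domains) to $\omega = \di w$ and rescale to obtain a dimensional constant. You spell out one step the paper leaves implicit, namely the extraction of the weak boundary condition $\mathbf{n}\, \di w = 0$ from the Euler--Lagrange identity via the normal-trace pairing, but this is a matter of detail rather than a different route.
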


\begin{proof}
The proof can be split in two steps, we first prove that there exists a function $w$ in $H^1_\di\Lambda^r(\cu)$ solution of the Neumann problem~\eqref{Neumannpb} and then that the solution $w$ satisfies $\di w \in H^1\Lambda^{r+1}(\cu)$ with the regularity estimate~\eqref{regNeumann}.

To solve the equation~\eqref{Neumannpb}, we define, for $v \in H^1_\di \Lambda^r(\cu)$,
$
\mathcal{J} (v) := \left\langle \di v, \di v \right\rangle_\cu - \left\langle u, v \right\rangle_\cu
$
and consider the variational problem
$
\inf_{v \in H^1_\di\Lambda^r(\cu) \cap C^r_\di(\cu)^\perp }\mathcal{J}(v).
$ 
By the standard minimization techniques of the calculus of variations and the Poincar\'e-Wirtinger inequality (Proposition~\ref{PoincareineqH10conv}), it is straightforward to prove that there exists a unique minimizer $w$ of this problem. By the first variation, the minimizer $w$ solves the equation~\eqref{Neumannpb}.

There remains to prove the regularity estimate~\eqref{regNeumann}. The argument is an adaptation of~\cite[Corollary 6.6]{mitrea2001dirichlet}. The main ingredient of this step is the Gaffney-Friedrichs inequality stated in Proposition~\ref{GFCDsmooth} applied with $U = \cu$ and $\omega = \di w$. This form satisfies $ \omega  \in L^2\Lambda^{r+1}(\cu)$, $\di  \omega  = \di \di u = 0 \in  L^2\Lambda^{r+2}(\cu)$, $\delta  \omega  = u \in L^2\Lambda^{r}(\cu)$ and $\mathbf{n} \omega = 0$. Thus $\omega$ belongs to the space $H^1\Lambda^{r+1} (\cu)$ and there exists a constant $C := C(\cu) < \infty$,
$
\| \nabla \omega \|_{L^2  \Lambda^{r+1}(\cu)} \leq C \| u \|_{L^2 \Lambda^r (\cu)}.
$
By translation and scaling invariance, one obtains the existence of a constant $C := C(d) < \infty$ such that
$
\| \nabla \omega \|_{L^2  \Lambda^{r+1}(\cu)} \leq C \| u \|_{L^2 \Lambda^r (\cu)}.
$
The proof of~\eqref{regNeumann} is complete.
\end{proof}

We now apply Lemma~\ref{lemmaneumannpb} to prove Proposition~\ref{Poincareineq}.

\begin{proof}[Proof of Proposition~\ref{Poincareineq}]
We first note that it is enough to prove the result when $u$ belongs to the space $H^1_\di \Lambda^r (\cu) \cap \left( C^r_\di (\cu) \right)^\perp$. Using the function $w \in H^1_\di \Lambda^r (\cu)$ solution of the Neumann problem~\eqref{Neumannpb} in the cube $\cu$, one has
\begin{equation*}
\|u \|_{\underline{L}^2 \Lambda^r (\cu)}^2
				 =   \frac{1}{|\cu|} \left\langle \di u, \di w \right\rangle_\cu 
				 \leq \| \di u \|_{\underline{H}^{-1}\Lambda^{r+1}(\cu)} \left( \size(\cu)^{-1} \left| \left( \di w \right)_{\cu} \right|  + \| \nabla \di w \|_{\underline{L}^2\Lambda^{r+1}(\cu)}\right).
\end{equation*}
By Lemma~\ref{lemmaneumannpb}, one has the estimate
\begin{equation*}
\| \nabla \di w \|_{\underline{L}^2\Lambda^{r+1}(\cu)} \leq  C \| u \|_{\underline{L}^2\Lambda^r(\cu)}.
\end{equation*}
To complete the proof, there remains to estimate $\left| \left( \di w \right)_{\cu} \right| $. To this end, we denote by
\begin{equation*}
p = \sum_{i_1 < \cdots < i_p} p_{i_1, \cdots , i_p} \di x_{i_1} \wedge \cdots \wedge  \di x_{i_1} := \frac{ \left( \di w \right)_{\cu}}{ \left| \left( \di w \right)_{\cu} \right| } ,
\end{equation*}
and denote by
\begin{equation} \label{def.lp}
l_p : \left\{ \begin{aligned}
\Rd & \rightarrow  \Lambda^p(\Rd), \\
 x  & \mapsto  \sum_{i_1 < \cdots < i_p} p_{i_1 , \cdots, i_p} x_{i_1} \di x_{i_2} \wedge \cdots \wedge \di x_{i_p},
\end{aligned}  \right.
\end{equation}
so that $\di l_p = p$. Testing the equation~\eqref{Neumannpb} with $\alpha = l_p$, one obtains
\begin{equation*}
\left| \left( \di w \right)_{\cu} \right|   =  \frac{1}{|\cu|}  \left| \left\langle p, \di w  \right\rangle_\cu \right|
								=  \frac{1}{|\cu|}  \left| \left\langle \di l_p, \di w  \right\rangle_\cu \right| 
								 =  \frac{1}{|\cu|}  \left|  \left\langle  l_p, u  \right\rangle_\cu \right| 
								 \leq C \size(\cu) \| u \|_{\underline{L}^2\Lambda^r(\cu)}.
\end{equation*}
Combining the previous results completes the proof of the proposition.
\end{proof}

We are now ready to prove Proposition~\ref{multscalepoincare}.

\begin{proof}[Proof of Proposition~\ref{multscalepoincare}]
The result is a consequence of Proposition~\ref{Poincareineq} and the version of the multiscale Poincar\'e inequality for functions stated in~\cite[Proposition 1.8]{armstrong2017quantitative} with the choice of function $f = \di u$.
\end{proof}

\subsection{The Caccioppoli inequality.} We complete Section~\ref{section4} by stating a version of the Caccioppoli inequality for differential forms. Recall the definition of the space $\Omega_r$ stated in~\eqref{defOmegar} and, given an environment $\a \in \Omega_r$, the definition of the space of solutions $\A(U)$ stated in~\eqref{defsolution}.

\begin{proposition}[Caccioppoli inequality]  \label{caccioppoli}
There exists a constant $C := C(d, \lambda) < \infty$ such that, for every $1 \leq r \leq d$, every open subsets $V, U \subseteq \Rd$ satisfying $\overline{V} \subseteq U$, and every $u \in \A(U)$,
\begin{equation*}
\|\di u \|_{L^2 \Lambda^{r+1}(V)} \leq  \frac{C}{\dist(V, \partial U)}\| u \|_{L^2 \Lambda^{r}(U \setminus V)}.
\end{equation*}
\end{proposition}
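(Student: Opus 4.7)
I will run the usual Caccioppoli argument with a smooth scalar cutoff function, adapted to the differential-form setting via the Leibniz rule for the exterior derivative. Pick $\eta \in C_c^\infty(\Rd)$ with $0 \leq \eta \leq 1$, $\eta \equiv 1$ on $V$, $\supp \eta \subset U$, and $|\nabla \eta| \leq C\,\dist(V, \partial U)^{-1}$ (possible since $\overline{V} \subset U$). Because $\eta^2$ is a compactly supported smooth scalar and $u \in H^1_\di \Lambda^r(U)$, the Leibniz identity
\begin{equation*}
\di(\eta^2 u) = 2\eta\,\di\eta \wedge u + \eta^2\,\di u
\end{equation*}
shows that $\eta^2 u \in H^1_\di\Lambda^r(U)$ with vanishing tangential trace on $\partial U$; by Proposition~\ref{propdensity}, $\eta^2 u \in H^1_{\di,0}\Lambda^r(U)$. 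The density-extended equivalent formulation~\eqref{defsolutionbis} of $\A(U)$ will then let me test the equation with $v = \eta^2 u$, giving
\begin{equation*}
0 \;=\; \int_U \di u \wedge \a\,\di(\eta^2 u) \;=\; \int_U \eta^2\,\di u \wedge \a\,\di u \;+\; 2\int_U \eta\,\di u \wedge \a(\di\eta \wedge u).
\end{equation*}

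Next I will apply ellipticity. The lower bound in~\eqref{ellipticityassumption}, used pointwise, gives $\int_U \eta^2\,\di u \wedge \a\,\di u \geq \lambda \int_U \eta^2|\di u|^2\,dx$. For the cross term, regard $B_x(p,q) := \star(p \wedge \a(x)q)$ as a symmetric bilinear form whose spectrum lies in $[\lambda,\lambda^{-1}]$; the Cauchy--Schwarz inequality for $B_x$ then yields the pointwise bound $|B_x(p,q)| \leq \lambda^{-1}|p||q|$, so combined with the elementary estimate $|\di\eta \wedge u| \leq |\di\eta||u|$,
\begin{equation*}
\Bigl|2\int_U \eta\,\di u \wedge \a(\di\eta \wedge u)\Bigr| \;\leq\; 2\lambda^{-1}\int_U \eta|\di u|\,|\di\eta|\,|u|\,dx.
\end{equation*}

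To finish, Young's inequality gives $2\lambda^{-1}\eta|\di u||\di\eta||u| \leq \tfrac{\lambda}{2}\eta^2|\di u|^2 + C(\lambda)|\di\eta|^2|u|^2$, which after substitution absorbs the $\eta^2|\di u|^2$ term into the left side and leaves
\begin{equation*}
\int_V |\di u|^2\,dx \;\leq\; \int_U \eta^2|\di u|^2\,dx \;\leq\; C(\lambda)\int_U |\di\eta|^2|u|^2\,dx \;\leq\; \frac{C(d,\lambda)}{\dist(V,\partial U)^2}\int_{U\setminus V}|u|^2\,dx,
\end{equation*}
using $\eta \equiv 1$ on $V$ (so $\di\eta$ is supported in $U \setminus V$). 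Taking square roots yields the claim. The only substantive technicalities -- and thus the main things to be careful about -- are verifying that $\eta^2 u$ is an admissible test function (via Proposition~\ref{propdensity} and the density-extended relation~\eqref{defsolutionbis}) and deriving the Cauchy--Schwarz bound $|B_x(p,q)| \leq \lambda^{-1}|p||q|$ from the ellipticity~\eqref{ellipticityassumption}; everything else is cutoff bookkeeping and Young's inequality, exactly as in the classical scalar case.
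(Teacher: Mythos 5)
Your proof is correct and essentially the same as the paper's: both test the equation with $\eta^2 u$ (admissible via Proposition~\ref{propdensity} and~\eqref{defsolutionbis}), expand by Leibniz, and control the cross term using the pointwise scalar product $p \wedge \a p'$ together with ellipticity. The only cosmetic difference is the absorption step — you use pointwise Cauchy--Schwarz plus Young's inequality, while the paper applies Cauchy--Schwarz at the integral level to get a quadratic inequality of the form $X \leq C X^{1/2} Y^{1/2}$ and divides — but these are interchangeable standard bookkeeping variants of the same argument.
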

The proof of this inequality is identical to the standard proof for solutions of elliptic equations and the details are omitted.

\section{Quantitative Homogenization} \label{section5}

The goal of this section is to study the energy functional $J$ defined by the formula, for each pair $(p, q) \in \Lambda^{r}(\Rd) \times \Lambda^{d-r}(\Rd)$,
\begin{equation*}
J(U ,p ,q) :=\sup_{v \in \A(U) } \fint_U \left( -\frac 12 \di v \wedge \a \di v -  p \wedge \a \di v +   \di v \wedge q \right).
\end{equation*}
Thanks to the Poincar\'e-Wirtinger inequality, Proposition~\ref{PoincareineqH10conv}, one can prove that there exists a unique maximizer in the space $\A(U) \cap C^{r-1}_\di(U)^\perp$, denoted by $v( \cdot, U , p,q)$. The proof is similar to Step 1 of the proof of Lemma~\ref{lemmaneumannpb} and the details are omitted.

The objective of this section is to prove Theorem~\ref{maintheorem}; it is organized as follows. We first record in Proposition~\ref{basicpropJ} some useful properties about the quantity $J$. We then establish a series of lemmas, Lemmas~\ref{l.descendre} to~\ref{lemmaJflat}, before proving Theorem~\ref{maintheorem}. We finally deduce from Theorem~\ref{maintheorem} the quantitative sublinearity of the map $v( \cdot, U , p,q)$ and of the flux $\a \di v( \cdot, U , p,q)$; the result is stated in Proposition~\ref{weakcvggradientandflux}.

\subsection{The subadditive quantity $J$ and its basic properties.} \label{section4.1}

We first record some basic properties of the functional $J$; they are listed in the following proposition.

\begin{proposition}[Basic properties of $J$] \label{basicpropJ}
Fix a bounded Lipschitz domain $U \subseteq \Rd$. There exists a constant $C(d, \lambda) < \infty$ such that, for each $1 \leq r \leq d$, the quantity $J$ and the maximizer $v$ satisfy the following properties:
\begin{enumerate}
\item The map $(p , q) \mapsto J(U , p , q)$ is quadratic, and one has the estimates, for any $p , q \in \Lambda^{r}(\Rd) \times \Lambda^{d-r}(\Rd)$,
\begin{equation} \label{eq:170708}
    C^{-1} |p|^2 \leq J(U , p , 0) \leq C |p|^2 \hspace{5mm} \mbox{and} \hspace{5mm} C^{-1} |q|^2 \leq J(U , 0 , q) \leq C |q|^2.
\end{equation}
Additionally, the mapping $J$ can be decomposed as follows, for any pair $p , q \in \Lambda^{r}(\Rd) \times \Lambda^{d-r}(\Rd)$,
\begin{equation} \label{eq:170808}
    J (U,p, q)  = J (U,p, 0)  + J (U,0, q) - \star p \wedge q.
\end{equation}
\item The mapping $(p, q ) \mapsto v(\cdot,U,p,q)$ is linear.
\item One has the upper bounds, for any pair $(p,q) \in \Lambda^{r}(\Rd) \times \Lambda^{d-r}(\Rd)$,
\begin{equation} \label{eq:171508}
    0 \leq J(U,p,q) \leq C \left\| \di v(\cdot , p , q) \right\|_{\underline{L}^2 \left( U\right)} \leq C ( |p|^2+|q|^2).
\end{equation}
\item For each $(p,q) \in \Lambda^{r}(\Rd) \times  \Lambda^{d-r}(\Rd) $, the function $v ( \cdot, U,p,q)$ is characterized as the unique element of $\A(U) \cap C^{r-1}_\di(U)^\perp$ which satisfies, for each form $u \in \A(U)$,
\begin{equation} \label{e.firstvariation}
\int_U \di v \wedge \a \di u = \int_U ( - p \wedge \a \di u + \di u \wedge q ).
\end{equation}
\iffalse \item \textnormal{Quadratic response} For every $(p,q) \in \Lambda^{r}(\Rd) \times  \Lambda^{d-r}(\Rd) $ and $w \in \A(U)$,
\begin{multline} \label{e.quadraticresponse}
\frac 1C  \left\|\di w - \di v(\cdot ,U,p,q) \right\|_{\underline{L}^2 \Lambda^r(U)}^2 \\ 
\leq J(U,p,q) - \fint_U \left(- \frac 12 \di w \wedge \a \di w - p \wedge \a \di w + \di w \wedge q \right) \\ 
\leq C  \left\|\di w - \di v(\cdot ,U,p,q) \right\|_{\underline{L}^2 \Lambda^r(U)}^2.
\end{multline} 
\fi
\item Let $U_1, \ldots, U_n \subseteq U$ be bounded Lipschitz domains that form a partition of $U$, i.e., $U_i \cap U_j = \emptyset$ if $i\neq j$ and $\left| U \setminus \bigcup_{i = 1}^N U_i \right| = 0
$. Then, for each pair $(p,q) \in \Lambda^{r}(\Rd) \times  \Lambda^{d-r}(\Rd)$,
\begin{equation} \label{controloptimizertau}
\sum_{i=1}^{n} \frac{|U_i|}{|U|} \left\| \di v(\cdot ,U,p,q) - \di v(\cdot ,U_i,p,q) \right\|_{\underline{L}^2 \Lambda^r \left(U_i \right)}^2 \leq C \sum_{i=1}^{n} \frac{|U_i|}{|U|} \left( J(U_i,p,q) - J(U,p,q) \right). 
\end{equation}
We note that the inequality~\eqref{controloptimizertau} implies that the term in the right-hand side is non-negative.
\item For every linear mapping $\tilde \a \in  \mathcal{L} \left(\Lambda^{r}(\Rd), \Lambda^{(d-r)}(\Rd) \right)$ satisfying the assumptions of symmetry and ellipticity~\eqref{symandelllassumption}, one has the estimate
\begin{equation*}
\sup_{(p, q) \in B_1\Lambda^{r}(\Rd) \times B_1\Lambda^{d-r}(\Rd)}  \left| J(U ,p , q) - \frac 12 p \wedge \tilde \a p -  \frac 12 \tilde \a^{-1} q \wedge q + \star \left( p \wedge q \right) \right| \leq \sup_{p \in B_1\Lambda^{r}(\Rd)} \left(  J\left( U , p , \tilde \a p \right) \right)^\frac12.
\end{equation*}
\end{enumerate}
\end{proposition}

\begin{proof}
These properties are easy to check and their proofs are almost the same of those of \cite[Lemma 2.2]{armstrong2017quantitative} and~\cite[Lemma 2.7]{armstrong2017quantitative} for the property (6), so we omit the details.
\end{proof}
We first proceed to the reduction explained in Section~\ref{section1.3} and note that, thanks to the property~(6) of Proposition~\ref{basicpropJ}, Theorem~\ref{maintheorem} is implied by the following proposition.

\begin{proposition} \label{maintheorem2}
Let $r$ be an integer in $\{ 1 , \ldots, d \}$. There exist an exponent $\alpha(d, \lambda) > 0$, a constant $C(d, \lambda) < \infty$ and a linear mapping $\ahom \in \mathcal{L} \left(\Lambda^{r}(\Rd), \Lambda^{(d-r)}(\Rd) \right)$ satisfying the assumptions~\eqref{symandelllassumption} such that, for every integer $n \in \N$,
\begin{equation} \label{statementmaintheorem2}
\sup_{p \in B_1\Lambda^{r}(\Rd)}  J(\cu_n ,p , \ahom p)  \leq \O_1 \left( C 3^{-n \alpha}\right).
\end{equation}
\end{proposition}

The rest of this section is devoted to the demonstration of Proposition~\ref{maintheorem2} and we now turn to the proofs of a series of lemmas, which are then used in its proof. In the following lemma, we use the inequality~\eqref{controloptimizertau} to obtain a control on the spatial average of the exterior derivative of the maximizer $v( \cdot , \cu_m , p , q)$ in the triadic cube $\cu_m$ in terms of the subadditivity defect of the energy $J$. 

We first introduce a few definitions. For any pair of integers $m,n\in\N$ such that $n<m$, we denote by $\Ze_{m,n} := 3^n \Zd \cap \cu_m$; it is a finite set of cardinality $3^{d(m-n)}$. For each pair $(p,q) \in \Lambda^{r}(\Rd) \times \Lambda^{d-r}(\Rd)$ and each collection $\{ q_z' \}_{z\in \Ze_{m,n}} \in \Lambda^{d-r}(\Rd)$, we introduce the following notation,
$v:= v(\cdot,\cu_m,p,q)$ and $v_z:= v(\cdot,z+\cu_n,p,q).
$

\begin{lemma}
\label{l.descendre}
One has the estimate
\begin{equation}
\label{e.descendre}
\frac{1}{|\cu_m|} \sum_{z\in \Ze_{n,m}} \left| \int_{z + \cu_n}  \left( \di v - \di v_z \right) \wedge q_z' \right|
\leq  C
\left( \avsum_{z\in \Ze_{n,m}} |q_z'|^2  \right)^{\frac12} 
\left( \avsum_{z\in \Ze_{n,m}} J(z+\cu_n,p,q) - J(\cu_m,p,q) \right)^{\frac12}.
\end{equation}
\end{lemma}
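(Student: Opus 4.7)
The plan is to combine a Cauchy--Schwarz inequality on each cube $z+\cu_n$ with a discrete Cauchy--Schwarz over $z\in\Ze_{m,n}$, and then to identify the resulting $L^2$-defect of the optimizers with the subadditivity defect via property (8) of Proposition~\ref{basicpropJ}.

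First, for each fixed $z$, the pointwise identity $\alpha\wedge\star\gamma=(\alpha,\gamma)\,\di x_1\wedge\cdots\wedge \di x_d$ from \eqref{def.scproductform}, together with the isometry property of the Hodge star $\star$, gives the pointwise wedge bound $|\alpha\wedge\beta|\leq|\alpha|\,|\beta|$ for $\alpha\in\Lambda^r(\Rd)$ and $\beta\in\Lambda^{d-r}(\Rd)$, where $|\alpha\wedge\beta|$ denotes the absolute value of its scalar coefficient on $\di x_1\wedge\cdots\wedge \di x_d$. Combined with the Cauchy--Schwarz inequality in $L^2(z+\cu_n)$ and with the fact that $q_z'$ is constant, this yields
\begin{equation*}
\left|\int_{z+\cu_n}(\di v - \di v_z)\wedge q_z'\right|\leq\|\di v-\di v_z\|_{L^2\Lambda^r(z+\cu_n)}\,|z+\cu_n|^{1/2}\,|q_z'|.
\end{equation*}

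Next, summing over $z$, dividing by $|\cu_m|$, using $|z+\cu_n|/|\cu_m|=|\Ze_{m,n}|^{-1}$ and rewriting $\|\cdot\|_{L^2(z+\cu_n)}^2=|z+\cu_n|\,\|\cdot\|_{\underline{L}^2(z+\cu_n)}^2$ turns the left-hand side into a normalized average over $z\in\Ze_{m,n}$. Applying the discrete Cauchy--Schwarz inequality then gives
\begin{equation*}
\frac{1}{|\cu_m|}\sum_z\left|\int_{z+\cu_n}(\di v-\di v_z)\wedge q_z'\right|\leq\left(\avsum_z|q_z'|^2\right)^{1/2}\left(\avsum_z\|\di v-\di v_z\|_{\underline{L}^2(z+\cu_n)}^2\right)^{1/2}.
\end{equation*}

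Finally, I would invoke property (8) of Proposition~\ref{basicpropJ}, namely the estimate \eqref{controloptimizertau} applied to the partition of $\cu_m$ into the cubes $\{z+\cu_n\}_{z\in\Ze_{m,n}}$, to bound the second factor by $\avsum_z J(z+\cu_n,p,q)-J(\cu_m,p,q)$, yielding \eqref{e.descendre}. The main point is the identity obtained by testing the first variation \eqref{e.firstvariation} for $v_z$ on $z+\cu_n$ against $v|_{z+\cu_n}\in\A(z+\cu_n)$ and summing, which is the mechanism behind \eqref{controloptimizertau}: it converts the subadditivity \emph{inequality} satisfied by $J$ into a genuine quadratic control of $\di v-\di v_z$. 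The remaining work is routine bookkeeping; the only mild subtlety is the pointwise wedge bound, which is immediate from the Hodge star.
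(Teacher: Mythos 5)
Your proof is correct and follows essentially the same route as the paper's: Cauchy--Schwarz on each cube, then discrete Cauchy--Schwarz over $z\in\Ze_{m,n}$, then the subadditivity-defect bound \eqref{controloptimizertau}. The one small divergence is in how the wedge against the constant form $q_z'$ is converted into an $L^2$ bound on each cube: you do this directly via the pointwise inequality $|\alpha\wedge\beta|\le|\alpha||\beta|$ followed by $L^2$ Cauchy--Schwarz, whereas the paper introduces the auxiliary maximizer $v_z'=v(\cdot,z+\cu_n,0,q_z')$, uses the first variation \eqref{e.firstvariation} to rewrite $\int_{z+\cu_n}(\di v-\di v_z)\wedge q_z'=\int_{z+\cu_n}\di v_z'\wedge\a(\di v-\di v_z)$, and then applies the $\a$-weighted Cauchy--Schwarz together with the bound $\|\di v_z'\|_{\underline L^2}\le C|q_z'|$ from \eqref{boundL2normdv}. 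Your version bypasses the auxiliary maximizer entirely, which is arguably cleaner and yields the same conclusion. One small bookkeeping point that applies equally to the paper's argument: both proofs produce a dimensional constant $C=C(d,\lambda)$ on the right-hand side, coming from \eqref{controloptimizertau}, even though the inequality \eqref{e.descendre} as displayed omits it; this is a harmless typo in the statement (the downstream use in Lemma~\ref{l.flatness} carries a $C$).
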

\begin{proof}
We compute, using the H\"older inequality, 
\begin{align*}
\lefteqn{ 
\frac{1}{|\cu_m|} \sum_{z\in \Ze_{n,m}} \left| \int_{z + \cu_n} q_z' \wedge  \left( \di v - \di v_z \right) \right|
} \qquad & 
\\ & 
\leq  \frac{C}{|\cu_m|} \sum_{z\in \Ze_{n,m}} |q_z'| \left\| \di v - \di v_z  \right\|_{L^2(z+\cu_n)} 
\\ & 
\leq C\left(  \frac{1}{|\cu_m|} \sum_{z\in \Ze_{n,m}} 
 |q_z'|^2 \right)
\left(  \frac{1}{|\cu_m|}  \sum_{z\in \Ze_{n,m}} \left\| \left( \di v - \di v_z \right)  \right\|_{L^2(z+\cu_n)}^2 \right)^{\frac12}.
\end{align*}
Applying the estimate~\eqref{controloptimizertau} completes the proof of the inequality~\eqref{e.descendre}.
\end{proof}

\subsection{Estimate on the variance of the slope of the maximizer $v$.} \label{section4.2}
Given a differential form $u$, by analogy to the case of functions, we refer to the slope of $u$ over a bounded domain $U \subseteq \Rd$ as the mean value of its exterior derivative $\left( \di u(\cdot ,\cu_m,p,q) \right)_{U}.$ This first lemma shows that, under the assumption of finite range dependence of the environment, the variance of the slope of the differential form $v$ has to contract. The statement is quantified in terms of the expectation of the subadditivivity defect $\tau_n$ which is defined by the formula 
$$\tau_n := \sup_{(p,q) \in B_1\Lambda^{r}(\Rd) \times  B_1\Lambda^{d-r}(\Rd)}  \E \left[ J(\cu_n,p,q) - J(\cu_{n+1},p,q) \right].$$

The proof of Lemma~\ref{l.flatness} relies the standard concentration estimate for the variance of a sum of random variables and the estimate~\eqref{controloptimizertau}. 

\begin{lemma}
\label{l.flatness}
Let $m,n \in\N$ with $0 \leq n\leq m-2$. Then there exists a constant $C(d, \lambda)<\infty$ such that, for every pair $(p,q) \in B_1\Lambda^{r}(\Rd) \times  B_1\Lambda^{d-r}(\Rd)$,
\begin{equation} \label{e.flatness0}
\var\left[ \left( \di v(\cdot ,\cu_m,p,q) \right)_{\cu_m} \right] 
\leq 
C 3^{-d(m-n)} \var\left[ \left(  \di v(\cdot,\cu_n,p,q) \right)_{\cu_n} \right]
+ C\sum_{k=m}^n \tau_k. 
\end{equation}
\end{lemma}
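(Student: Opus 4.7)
Write $v := v(\cdot,\cu_m,p,q)$ and, for each $z \in \Ze_{m,n}$, $v_z := v(\cdot,z+\cu_n,p,q)$. The starting point is the decomposition
\begin{equation*}
(\di v)_{\cu_m} \;=\; \underbrace{\frac{1}{|\Ze_{m,n}|}\sum_{z\in\Ze_{m,n}} (\di v_z)_{z+\cu_n}}_{=:A} \;+\; \underbrace{\frac{1}{|\cu_m|}\sum_{z\in\Ze_{m,n}} \int_{z+\cu_n}\!\!\bigl(\di v-\di v_z\bigr)}_{=:B},
\end{equation*}
which uses that $\{z+\cu_n\}_{z\in\Ze_{m,n}}$ partitions $\cu_m$. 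Since $\var[A+B]\le 2\var[A]+2\var[B]$ and $\var[B]\le \E[|B|^2]$, the conclusion \eqref{e.flatness0} will follow from separate estimates on $\var[A]$ and $\E[|B|^2]$.

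\textbf{Estimate on $\var[A]$.} Set $X_z := (\di v_z)_{z+\cu_n}\in\Lambda^r(\Rd)$, so that $A=|\Ze_{m,n}|^{-1}\sum_z X_z$. By stationarity \eqref{stationarityassumption} applied along the translation $\tau_z$, each $X_z$ has the same law as $X_0 = (\di v(\cdot,\cu_n,p,q))_{\cu_n}$, and each $X_z$ is $\F_r(z+\cu_n)$-measurable. The pair of cubes $z+\cu_n$ and $z'+\cu_n$ has distance $\max(0,|z-z'|_\infty-3^n)\ge 1$ as soon as $|z-z'|_\infty\ge 2\cdot 3^n$, so by the unit range dependence \eqref{independenceassumption}, $X_z$ and $X_{z'}$ are independent in that regime. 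For each $z$, the number of $z'\in\Ze_{m,n}$ with $|z-z'|_\infty<2\cdot 3^n$ is at most $3^d$. Applying Cauchy--Schwarz to each nontrivial covariance and using $|\Ze_{m,n}|=3^{d(m-n)}$,
\begin{equation*}
\var[A]\;=\;\frac{1}{|\Ze_{m,n}|^2}\sum_{z,z'\in\Ze_{m,n}}\cov\!\bigl[X_z,X_{z'}\bigr]\;\le\;\frac{3^d}{|\Ze_{m,n}|}\,\var[X_0]\;\le\;C\,3^{-d(m-n)}\,\var\!\left[\bigl(\di v(\cdot,\cu_n,p,q)\bigr)_{\cu_n}\right].
\end{equation*}

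\textbf{Estimate on $\E[|B|^2]$.} Since $B\in\Lambda^r(\Rd)$, we have $|B|=\sup_{q'\in B_1\Lambda^{d-r}(\Rd)}\star(B\wedge q')$. For any fixed $q'$ with $|q'|\le 1$, applying Lemma~\ref{l.descendre} with the constant choice $q'_z\equiv q'$ yields
\begin{equation*}
\left|\star(B\wedge q')\right|\;\le\;\frac{1}{|\cu_m|}\sum_{z\in\Ze_{m,n}}\left|\int_{z+\cu_n}(\di v-\di v_z)\wedge q'\right|\;\le\;\left(\avsum_{z\in\Ze_{m,n}} J(z+\cu_n,p,q)-J(\cu_m,p,q)\right)^{\!1/2}.
\end{equation*}
Taking the supremum over $q'\in B_1\Lambda^{d-r}(\Rd)$, squaring, taking expectation, and invoking stationarity \eqref{stationarityassumption} to conclude $\E[J(z+\cu_n,p,q)]=\E[J(\cu_n,p,q)]$ for every $z\in\Ze_{m,n}$, we obtain
\begin{equation*}
\E\bigl[|B|^2\bigr]\;\le\;C\,\E\!\left[J(\cu_n,p,q)-J(\cu_m,p,q)\right].
\end{equation*}

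\textbf{Combining.} Since $\var[(\di v)_{\cu_m}]\le 2\var[A]+2\var[B]\le 2\var[A]+2\E[|B|^2]$, the two bounds above yield \eqref{e.flatness0}. The only delicate point is the handling of the covariance structure of $\{X_z\}$ in the bound for $\var[A]$; the hypothesis $n\le m-2$ simply guarantees $3^{-d(m-n)}\le 3^{-2d}$, so that the iteration eventually driving $\var[(\di v)_{\cu_m}]$ to zero in Section~\ref{section5} is nontrivial. No deeper obstacle is encountered since the quadratic bound on $B$ is built directly from Lemma~\ref{l.descendre}, which has already been proven.
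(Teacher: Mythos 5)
Your proof is correct and follows essentially the same route as the paper: decompose $(\di v)_{\cu_m}$ into the average of the local means plus an error controlled by Lemma~\ref{l.descendre}, then exploit the unit-range dependence and stationarity to gain the factor $3^{-d(m-n)}$ on the variance of the main term. The only cosmetic differences are that you keep the $\Lambda^r$-valued quantity throughout and bound $\var[A]$ by counting the at most $3^d$ nontrivial covariances per site, whereas the paper pairs with a fixed $q'\in B_1\Lambda^{d-r}(\Rd)$ and partitions $\Ze_{m,n}$ into $3^d$ independent sublattices---two equivalent instances of the same finite-range argument.
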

\begin{proof}
We first fix two integers $m, n\in\N$ satisfying $n\leq m-2$ and a form $q'\in B_1\Lambda^{d-r}(\Rd)$. We apply Lemma~\ref{l.descendre} with the value $q_z' := q'$ and obtain
\begin{multline}
\label{e.descendre0}
\frac{1}{|\cu_m|} \left|  \int_{ \cu_m}  \di v(\cdot,\cu_m,p,q) \wedge q' - \sum_{z\in \Ze_{n,m}} \int_{z + \cu_n} \di v(\cdot, z + \cu_n,p,q) \wedge q'  \right|
\\
\leq 
C 
\left( 3^{d(n-m)}\avsum_{z\in \Ze_{n,m}} J(z+\cu_n,p,q) - J(\cu_m,p,q) \right)^{\frac12}.
\end{multline}
From the estimate~\eqref{e.descendre0}, we deduce
\begin{align}
\label{e.descendrebis}
\var\left[ \fint_{ \cu_m} \di v(\cdot,\cu_m,p,q) \wedge q' \right] 
 &  
\leq 
2 \var\left[   \frac{1}{|\cu_m|} \sum_{z\in \Ze_{n,m}} \int_{z + \cu_n} \di v(\cdot, z + \cu_n,p,q) \wedge q'  \right]
\\ & \quad 
+ 2 C 3^{d(n-m)} \E \left[
\avsum_{z\in \Ze_{n,m}} J(z+\cu_n,p,q) - J(\cu_m,p,q) \right]. \notag
\end{align}
We take an enumeration $\{ z_{i,j} ~:~ 1 \leq i \leq 3^d, 1 \leq j \leq 3^{d(m-n-1)} \}$ of the set $\mathcal{Z}_{m,n}$ such that for each $1 \leq i \leq 3^d$ and each $1 \leq j,j' \leq  3^{d(m-n-1)}$,
$
|z_{i,j} - z_{i,j'}| \geq  2\cdot 3^n.
$
This gives in particular
$
\dist( z_{i,j} + \cu_n , z_{i,j'} + \cu_n ) \geq 3^n.
$
By the finite range dependence assumption~\eqref{independenceassumption}, we know that the $\sigma$-algebras
$\F_r( z_{i,j} + \cu_n )$ and $\F_r( z_{i,j'} + \cu_n )$ are independent.
We then compute
\begin{align*} \label{}
\lefteqn{ 
\var\left[ \frac{1}{|\cu_m|} \sum_{z\in \Ze_{n,m}} \int_{z + \cu_n} \di v(\cdot, z + \cu_n,p,q) \wedge q'  \right]
} \quad & 
\\ & 
= 3^{-2dm} \var\left[  \sum_{i = 1}^{3^d} \sum_{j=1}^{3^{d(m-n-1)}}  \int_{z_{i,j} + \cu_n} \di v(\cdot, z_{i,j} + \cu_n ,p,q) \wedge q' \right]
\\ &
\leq 3^{-2dm + d} \sum_{i = 1}^{3^d}  \var\left[ \sum_{j=1}^{3^{d(m-n-1)}}  \int_{z_{i,j} + \cu_n} \di v(\cdot, z_{i,j} + \cu_n ,p,q) \wedge q' \right]
\\ &
\leq 3^{-2dm + d} \sum_{i = 1}^{3^d} \sum_{j=1}^{3^{d(m-n-1)}}  \var\left[ \int_{z_{i,j} + \cu_n} \di v(\cdot, z_{i,j} + \cu_n ,p,q) \wedge q' \right]
\\ &
\leq 3^{d(-m + 1 - n )} \var\left[ \int_{ \cu_n}  \di v(\cdot, \cu_n ,p,q)  \wedge q' \right]
\\ &
\leq  C 3^{-d(m - n)} \var\left[\fint_{ \cu_n} \di v(\cdot, \cu_n ,p,q) \wedge q' \right].
\end{align*}
Combining the previous display with~\eqref{e.descendrebis} and taking the supremum over $q' \in B_1\Lambda^{d-r}(\Rd)$ completes the proof of the lemma.
\end{proof}

We now iterate Lemma~\ref{l.flatness} to obtain a control on the variance of the slope of the maximizer $ v(\cdot,\cu_m,p,q)$ in terms of the subadditivity defect $\tau_k$.
\begin{lemma} \label{l.flatness2}
There exist a constant $C(d, \lambda ) < \infty$ and an exponent $\beta := \beta (d, \lambda) > 0$ such that for every $(p,q) \in  B_1\Lambda^{r}(\Rd) \times  B_1\Lambda^{d-r}(\Rd)$ and every integer $m \in \N$,
\begin{equation} \label{e.flatness}
\var\left[ \left( \di v(\cdot,\cu_m,p,q) \right)_{\cu_m} \right] \leq C \sum_{n=0}^m  3^{\beta (n-m)} \tau_n + C 3^{-\beta m}.
\end{equation}
\end{lemma}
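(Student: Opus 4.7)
The plan is to iterate Lemma~\ref{l.flatness} at a fixed scale jump $k_0$ chosen large enough to make the prefactor a strict contraction, and then rearrange the resulting sums into the desired weighted form.

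First, for each fixed $(p,q) \in B_1\Lambda^r(\Rd) \times B_1\Lambda^{d-r}(\Rd)$, set
\begin{equation*}
f_{p,q}(m) := \var\left[ \left( \di v(\cdot,\cu_m,p,q) \right)_{\cu_m} \right].
\end{equation*}
Two elementary facts are needed before iterating. By Jensen and the uniform $L^2$ bound~\eqref{boundL2normdv}, one has the a priori bound $f_{p,q}(m) \leq C_0$ for some $C_0(d,\lambda)<\infty$ independent of $m$ and $(p,q) \in B_1 \times B_1$. By the definition of $\tau_k$ and a telescoping argument one also has, uniformly in $(p,q) \in B_1 \times B_1$,
\begin{equation*}
\E\left[ J(\cu_n,p,q) - J(\cu_m,p,q) \right] \;=\; \sum_{k=n}^{m-1} \E\left[ J(\cu_k,p,q) - J(\cu_{k+1},p,q) \right] \;\leq\; \sum_{k=n}^{m-1} \tau_k.
\end{equation*}
Combining these with Lemma~\ref{l.flatness} (and using the monotonicity implied by the subadditivity~\eqref{e.subadditivityJ} that makes each summand nonnegative) yields the recursion, valid for each $0 \leq n \leq m-2$,
\begin{equation*}
f_{p,q}(m) \;\leq\; C_1 \, 3^{-d(m-n)} f_{p,q}(n) + C_1 \sum_{k=n}^{m-1} \tau_k.
\end{equation*}

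Next, choose $k_0 := k_0(d,\lambda) \in \N$ large enough that $C_1 \, 3^{-d k_0} \leq \tfrac{1}{2}$, and apply the recursion with $n = m - k_0$ (assuming $m \geq k_0$; the finitely many smaller values of $m$ are absorbed into the final constant) to obtain
\begin{equation*}
f_{p,q}(m) \;\leq\; \frac{1}{2} \, f_{p,q}(m - k_0) + C_1 \sum_{k = m - k_0}^{m-1} \tau_k.
\end{equation*}
Iterating this inequality $j := \lfloor m/k_0 \rfloor$ times and using the a priori bound $f_{p,q}(m - j k_0) \leq C_0$ for the final residual gives
\begin{equation*}
f_{p,q}(m) \;\leq\; C_0 \, 2^{-j} + C_1 \sum_{i=0}^{j-1} 2^{-i} \sum_{k = m-(i+1)k_0}^{m-ik_0-1} \tau_k.
\end{equation*}

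Finally, set $\beta := (\log 2)/(k_0 \log 3) > 0$, so that $2^{-j} \leq C \, 3^{-\beta m}$ and, for every $k$ falling in the $i$-th block $[m-(i+1)k_0, m-ik_0-1]$, the weight satisfies $2^{-i} \leq C \, 3^{\beta(k-m)}$. Substituting these bounds and re-indexing the double sum collapses it to $C \sum_{k=0}^{m-1} 3^{\beta(k-m)} \tau_k$, which yields~\eqref{e.flatness}. The hypothesis $(p,q) \in B_1\times B_1$ enters only through the a priori bound $f_{p,q}(m) \leq C_0$, so the constants $C$ and $\beta$ are uniform in $(p,q)$. The only slightly delicate point is verifying that the constant $C_1$ appearing in the recursion depends only on $(d,\lambda)$ (and not on $m,n$), which is immediate from the statement of Lemma~\ref{l.flatness}; the rest is a mechanical geometric-sum manipulation.
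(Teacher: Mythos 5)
Your proposal is correct and follows essentially the same route as the paper: iterate Lemma~\ref{l.flatness} at a fixed scale gap (the paper chooses $l$ so that $C3^{-dl}\le \tfrac13$, you choose $k_0$ so that $C_1 3^{-dk_0}\le\tfrac12$; the difference is immaterial), telescope the expectations to bound $\E[J(\cu_n,p,q)-J(\cu_m,p,q)]$ by $\sum_{k=n}^{m-1}\tau_k$, and absorb the residual via the a priori bound~\eqref{boundL2normdv}, giving a geometric exponent $\beta$ of the same form. Your write-up is in fact more explicit and careful than the paper's (which contains some typographical slips in the displayed recursion), but the underlying argument is identical.
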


\begin{proof}
We denote by $C := C(d , \lambda) < \infty$ the constant of Lemma~\ref{l.flatness} and select an integer $l := l(d, \lambda) \in \N$ such that 
$
3^{-d - 1} < C 3^{-dl} \leq \frac 13 .
$
The inequality~\eqref{e.flatness0} applied with $n = m-l$ yields
\begin{equation*}
\var\left[\left( \di v(\cdot,\cu_m,p,q) \right)_{\cu_{m-l}} \right] 
\leq 
\frac 13 \var\left[ \left( \di v(\cdot ,\cu_m,p,q) \right)_{\cu_{m-l}} \right]
+ C \sum_{k = n-l}^n \tau_k. 
\end{equation*}
Iterating this estimate and using the bound on the $L^2$-norm of the exterior derivative $\di v$ stated in~\eqref{eq:171508} gives, for some constant $C := C(d , \lambda) < \infty,$
\begin{equation*}
\var\left[ \left( \di v(\cdot,\cu_m,p,q) \right)_{\cu_m} \right] 
\leq 
C 3^{-\frac nl}
+ C \sum_{k = 0}^n 3^{\frac{n-k}{l}} \tau_k. 
\end{equation*}
This completes the proof of the lemma with the value $\beta := \frac 1l$.
\end{proof}

\subsection{Flatness of the maximizers and control of the energy.} \label{section4.3}
We begin this section by giving a definition of the homogenized tensor $\ahom$, and, for a bounded domain $U \subseteq \Rd$, by defining the best guess approximation one can make of the matrix $\ahom$ by only looking at the environment in the domain~$U$. The precise definition is the following: using that the map $q \mapsto J(\cu_n,0,q)$ is quadratic and bounded from above and below according to the estimate~\eqref{eq:170708}, we let $\ahom_n$ be the unique linear map, which sends $\Lambda^{r}(\Rd)$ to $\Lambda^{d-r}(\Rd)$, satisfies the symmetry assumption~\eqref{symmetryassumption} and such that, for every $q \in \Lambda^{d-r}(\Rd)$,
\begin{equation} \label{def.ahomU}
\E \left[ J(\cu_n , 0, q) \right] = \frac 12  \star ( \ahom_n^{-1} q \wedge  q) .
\end{equation}

We record three properties about this quantity:
\begin{itemize}
\item Since $J$ satisfies the subadditivity property~\eqref{controloptimizertau} and since the environment satisfies the stationarity assumption~\eqref{stationarityassumption}, the sequence $\left( \E \left[ J(\cu_n , 0, q) \right] \right)_{n \in \N}$ is decreasing. Consequently it converges. This gives the definition of the homogenized tensor: it is the one which satisfies, for each $q \in \Lambda^{d-r}(\Rd)$,
\begin{equation*}
\E \left[ J(\cu_n , 0, q) \right] \underset{n \rightarrow \infty}{\longrightarrow}  \frac 12  \star (\ahom^{-1} q \wedge  q).
\end{equation*}
It is defined equivalently to be the limit
$
 \ahom_n \rightarrow  \ahom \mbox{ in } \mathcal{L} \left( \Lambda^{r}(\Rd) ,  \Lambda^{d-r}(\Rd)  \right).
$
Moreover, by~\eqref{eq:170708}, one has the estimate, for each $p \in \Lambda^{r}(\Rd)$ and each $n \in \N$,
$
C^{-1} |p|^2 \leq p \wedge \ahom_n p \leq C |p|^2.
$
Sending~$n$ to infinity shows that the same estimate holds for the tensor $\ahom$.
\item One has the formula, for each $q \in \Lambda^{d-r}(\Rd)$,
\begin{equation} \label{equivalentdefahomn}
 \ahom_n^{-1} q = \E \left[ \left( \di v(\cdot, \cu_n,0,q) \right)_{\cu_n} \right].
\end{equation}
which is a consequence of the first variation~\eqref{e.firstvariation}.
\item The difference between two values of the sequence $(\a_n)_{n \in \N}$ can be estimated in terms of the subadditivity defect $\tau_n$: for every $q \in B_1\Lambda^{d-r}(\Rd)$, $m,n \in \N$ such that $n < m$, we have
\begin{align} \label{ahomnahomm}
\lefteqn{|\ahom_m^{-1} q - \ahom_n^{-1} q|^2 } \qquad & \\ &
 = \left| \E \left[\left( \di v(\cdot ,\cu_m,0,q) \right)_{\cu_m} - 3^{d(n-m)} \sum_{z \in 3^n \Zd \cap \cu_m}\left( \di v(\cdot ,z + \cu_n,0,q) \right)_{z + \cu_n} \right] \right|^2 \notag \\ &
\leq \E \left[3^{d(n-m)} \sum_{z \in 3^n \Zd \cap \cu_m} \left\| \di v(\cdot, \cu_m, 0, q) - \di v(\cdot, z + \cu_n, 0, q) \right\|^2_{\underline{L}^2 \Lambda^r(U) } \right] \notag \\ &
\leq C \E \left[ J(\cu_n , 0 , q) - J(\cu_m , 0 , q)  \right] \notag \\ &
\leq C \sum_{k = n}^{m-1} \tau_k. \notag
\end{align}
\end{itemize}

For $p \in \Lambda^{r}(\Rd)$ and $m \in \N$, we denote by $l_p^m$ the unique element of $C^{r-1}_\di(\cu_m)^\perp$ such that $\di l_p^m = p$. It is the projection of the affine function $l_p$ defined in~\eqref{def.lp} on the space $C^{r-1}_\di (\cu_m)^\perp$. In the following lemma, we deduce from Lemma~\ref{l.flatness2} and the multiscale Poincar\'e inequality that the form $v( \cdot , \cu_{m+1}, p ,q )$ is close to the affine function $ l_{\ahom_m^{-1} q - p}^{m+1}$. The estimate is quantified in terms of the subadditivity defect $\tau_n$ or more precisely the suitably averaged sum of the subadditivity defects over all the scales from $1$ to $3^n$.

\begin{lemma} \label{lemmavflat}
There exists a constant $C := C(d, \lambda) < \infty$ such that, for every $m \in \N$, $(p,q) \in B_1\Lambda^{r}(\Rd) \times  B_1\Lambda^{d-r}(\Rd)$,
\begin{equation*}
\E \left[ \left\| v( \cdot , \cu_{m+1}, p ,q ) - l_{\ahom_m^{-1} q - p}^{m+1} \right\|^2_{\underline{L}^2 \Lambda^{r-1}(\cu_{m+1})} \right] \leq C 3^{( 2 -\beta )m} + C 3^{( 2 -\beta )m}  \sum_{k=0}^m 3^{\beta k} \tau_k.
\end{equation*}
\end{lemma}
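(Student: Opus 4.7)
The plan is to apply a Poincaré-Wirtinger reduction to recast the lemma as an $L^2$ estimate on $\di v - (\ahom_m^{-1} q - p)$, then split this quantity into a mean contribution and a spatial oscillation contribution.

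First I would set $u := v(\cdot, \cu_{m+1}, p, q) - l^{m+1}_{\ahom_m^{-1} q - p}$. Both summands lie in $C^{r-1}_\di(\cu_{m+1})^\perp$ by construction, hence so does $u$. Applying the Poincaré-Wirtinger inequality (Proposition~\ref{formpoincareW}) on $\cu_{m+1}$, whose scaling constant is proportional to $3^{m+1}$, gives
\begin{equation*}
\|u\|^2_{\underline{L}^2 \Lambda^{r-1}(\cu_{m+1})} \leq C\,3^{2m}\,\|\di v - (\ahom_m^{-1} q - p)\|^2_{\underline{L}^2 \Lambda^r(\cu_{m+1})},
\end{equation*}
so it is enough to show $\E\,\|\di v - (\ahom_m^{-1} q - p)\|^2_{\underline{L}^2} \leq C\,3^{-\beta m} + C\sum_{k=0}^m 3^{\beta(k-m)}\tau_k$.

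Using the orthogonality $\|f - c\|^2_{\underline{L}^2} = \|f - \bar f\|^2_{\underline{L}^2} + |\bar f - c|^2$ with $f = \di v$, $\bar f = (\di v)_{\cu_{m+1}}$, $c = \ahom_m^{-1} q - p$, the target splits into a mean part and a spatial oscillation part. For the mean part, the linearity decomposition of $v(\cdot, U, p, q)$ in Proposition~\ref{basicpropJ}(1), the identity~\eqref{equivalentdefahomn}, and a Stokes-based computation for the Dirichlet corrector giving $(\di v(\cdot, \cu_{m+1}, p, 0))_{\cu_{m+1}} = -p$, together yield $\E[(\di v)_{\cu_{m+1}}] = \ahom_{m+1}^{-1} q - p$; then
\begin{equation*}
\E\,|(\di v)_{\cu_{m+1}} - (\ahom_m^{-1}q - p)|^2 \leq 2\var[(\di v)_{\cu_{m+1}}] + 2\,|\ahom_{m+1}^{-1} q - \ahom_m^{-1} q|^2,
\end{equation*}
and both summands fit into the target bound via Lemma~\ref{l.flatness2} and~\eqref{ahomnahomm}.

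The spatial oscillation $\E\,\|\di v - (\di v)_{\cu_{m+1}}\|^2_{\underline{L}^2}$ is where the real work lies. I would partition $\cu_{m+1}$ into $3^d$ subcubes $\{z + \cu_m\}$, introduce the local maximizers $v_z := v(\cdot, z + \cu_m, p, q)$, and write, on each subcube,
\begin{equation*}
\di v - (\di v)_{\cu_{m+1}} = (\di v - \di v_z) + (\di v_z - (\di v_z)_{z + \cu_m}) + ((\di v_z)_{z + \cu_m} - (\di v)_{\cu_{m+1}}).
\end{equation*}
After taking squared $\underline{L}^2$ norms, weighting by $3^{-d}$ and taking expectations, the first contribution is bounded by $C\tau_m$ via the subadditive defect estimate~\eqref{controloptimizertau}; the third is controlled using another variance estimate from Lemma~\ref{l.flatness2} and the convergence bound~\eqref{ahomnahomm}; the second is, by stationarity, the same oscillation quantity one scale smaller. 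The main obstacle is closing this recursion: a plain triangle inequality brings an unfavorable factor $\geq 2$ at each step, so a naive iteration does not contract. To resolve this one must subdivide instead into subcubes $\{z + \cu_n\}$ at an optimal scale $n$ strictly below $m+1$, chosen as in the proof of Lemma~\ref{l.flatness2}, so that the $3^{-d(m-n)}$ variance reduction from Lemma~\ref{l.flatness} (enabled by the unit-range-dependence assumption~\eqref{independenceassumption}) dominates the triangle-inequality blow-up. Summing the resulting scale-indexed contributions, and using the trivial $\underline{L}^2$ bound~\eqref{boundL2normdv} on $\di v_z$ to terminate the recursion, produces exactly the geometric series $\sum 3^{\beta(k-m)} \tau_k$ together with the remainder $3^{-\beta m}$, giving the estimate claimed.
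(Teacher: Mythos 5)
Your first step is the fatal one. You replace the multiscale Poincar\'e inequality (Proposition~\ref{multscalepoincare}) by the plain Poincar\'e--Wirtinger inequality (Proposition~\ref{formpoincareW}), which reduces the lemma to showing that
\begin{equation*}
\E\left[ \left\| \di v(\cdot,\cu_{m+1},p,q) - \left(\ahom_m^{-1}q - p\right) \right\|^2_{\underline{L}^2\Lambda^r(\cu_{m+1})} \right] \leq C\,3^{-\beta m} + C\sum_{k=0}^m 3^{\beta(k-m)}\tau_k.
\end{equation*}
This intermediate claim is false: the left side does not decay in $m$. The maximizer's exterior derivative $\di v$ is, modulo a constant, the gradient of a corrector, and its spatial $L^2$ oscillation is an $O(1)$ quantity bounded below by a positive constant, uniformly in $m$ --- only weak ($\underline{H}^{-1}$ or spatially-averaged) smallness holds, not strong $L^2$ smallness. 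Since the right side tends to zero while the left side does not, no amount of further decomposition can close this step. Your recursion confirms the problem: the ``stationarity'' term $\di v_z - (\di v_z)_{z+\cu_m}$ that recurs at each scale has $L^2$ size $O(1)$ at every scale, including the base case $n=0$, so you inherit an $O(1)$ contribution no matter where you cut. You sense the obstruction (``the main obstacle is closing this recursion'') but misdiagnose it as a triangle-inequality constant; in fact even with constant $1$ the recursion bottoms out at an $O(1)$ base case.

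Relatedly, the $3^{-d(m-n)}$ variance reduction from Lemma~\ref{l.flatness} that you invoke controls $\var[(\di v)_{\cu_m}]$, the \emph{spatial mean}, not $\E\|\di v - (\di v)_{\cu_m}\|^2_{\underline{L}^2}$, the \emph{spatial oscillation}. Mixing squeezes the variance of the mean to zero, but leaves the pointwise oscillation alone. This is precisely why the paper invokes the multiscale Poincar\'e inequality (Proposition~\ref{multscalepoincare}): it replaces the cost $3^{2m}\|\di u\|^2_{\underline{L}^2}$ by a sum $\left(\sum_{n\le m}3^n(\ldots)^{1/2}\right)^2$ whose $n$-th term involves the averaged squared \emph{spatial means} $|\Ze_{m,n}|^{-1}\sum_z |(\di v - \ahom_m^{-1}q + p)_{z+\cu_n}|^2$ over subcubes of size $3^n$. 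These means do decay (via Lemma~\ref{l.flatness2}, \eqref{controloptimizertau} and \eqref{ahomnahomm}, the exact tools you list), and the geometry of the sum gives the $3^{\beta m}$ gain over the naive $3^{2m}$. Without the multiscale structure, the gain cannot be recovered. To repair the proposal, drop the Poincar\'e--Wirtinger step, apply Proposition~\ref{multscalepoincare} directly to $v - l^{m+1}_{\ahom_m^{-1}q - p}$, and redirect your decomposition (maximizer difference / scale-$n$ mean / $\ahom_n$ vs. $\ahom_m$) to bound the summand $\E[X_n]$ at each scale $n$, followed by a Cauchy--Schwarz step on $\sum 3^n X_n^{1/2}$.
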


\begin{proof}
Fix $(p,q) \in B_1\Lambda^{r}(\Rd) \times  B_1\Lambda^{d-r}(\Rd)$. Since, by definition, both forms $v(\cdot, \cu_{m+1},p,q)$ and $ l_{\ahom_m^{-1} q - p}^{m+1}$ belong to the space $C^{r-1}_\di(\cu_{m+1})^\perp$, the difference belongs to the space $C^{r-1}_\di(\cu_{m+1})^\perp$. We can thus apply the multiscale Poincar\'e inequality stated in Proposition~\ref{multscalepoincare},
\begin{align} \label{multiscalepoincarev}
\lefteqn{ \left\| v( \cdot , \cu_{m+1}, p ,q ) - l_{\ahom_m^{-1} q - p}^{m+1} \right\|^2_{\underline{L}^2 \Lambda^{r-1}(\cu_{m+1})}} \qquad & \\ & \leq C \left\| \di v( \cdot , \cu_{m+1}, p ,q ) -\ahom_m^{-1} q + p \right\|^2_{\underline{L}^2 \Lambda^r(\cu_{m+1})}   \notag\\
& + C \left( \sum_{n=0}^m 3^n \left( |\Ze_{m,n}|^{-1} \sum_{y \in \Ze_{m,n}} \left| \left( \di v( x , \cu_{m+1}, p ,q ) \, dx - \ahom_m^{-1} q + p  \right)_{z + \cu_n} \right|^2 \right)^\frac 12 \right)^2. \notag
\end{align}
The first term on the right side is estimated by the following computation
\begin{equation*}
  \left\| \di v( \cdot , \cu_{m+1}, p ,q ) -\ahom_m^{-1} q + p \right\|^2_{\underline{L}^2 \Lambda^r(\cu_{m+1})} 
\leq 2 \left| -\ahom_m^{-1} q + p \right|^2 + 2 \left\| \di v( \cdot , \cu_{m+1}, p ,q ) \right\|^2_{\underline{L}^2 \Lambda^r(\cu_{m+1})} \leq C .
\end{equation*}
We then estimate the second term on the right side of~\eqref{multiscalepoincarev} and prove the estimate, for every $0 \leq n \leq m$,
\begin{equation*}
\E \left[ |\Ze_{m,n}|^{-1} \sum_{y \in \Ze_{m,n}} \left|  \left( \di v( \cdot , \cu_{m+1}, p ,q ) - \ahom_m^{-1} q + p \right)_{y + \cu_{n}} \right|^2 \right]  \leq C \left( 3^{-n} + \sum_{k=0}^n 3^{k-n} \tau_k + \sum_{k=n}^m \tau_k \right).
\end{equation*}
By the inequality~\eqref{controloptimizertau}, we have, for every $(p,q) \in B_1 \Lambda^r(\Rd) \times  B_1 \Lambda^{d-r}(\Rd)$,
\begin{multline*}
|\Ze_{m,n}|^{-1} \sum_{y \in\Ze_{m,n}} \left\|  \di v( \cdot , \cu_{m+1}, p ,q ) -  \di v( \cdot ,z + \cu_{n}, p ,q ) \right\|^2_{\underline{L}^2 \Lambda^r (y + \cu_n)} \\
\leq C |\Ze_{m,n}|^{-1} \sum_{z \in \Ze_{m,n}}  \left( J(z + \cu_n, p ,q ) - J( \cu_m, p , q ) \right).
\end{multline*}
Taking expectations and using the stationarity of the environment yields
\begin{multline*}
|\Ze_{m,n}|^{-1} \E \left[  \sum_{y \in \Ze_{m,n}} \left\|  \di v(\cdot , \cu_{m+1}, p ,q ) -  \di v( \cdot ,z + \cu_{n}, p ,q ) \right\|^2_{\underline{L}^2 \Lambda^r (y + \cu_n)} \right] \\
\leq C \E \left[ J(\cu_n, p ,q ) - J(\cu_m, p , q ) \right] \leq C \sum_{k = n}^{m} \tau_k.
\end{multline*}
The triangle inequality, the previous display and Lemma~\ref{l.flatness2} then yield
\begin{align*}
\lefteqn{  |\Ze_{m,n}|^{-1} \sum_{y \in \Ze_{m,n}} \E \left[ \left|  \left( \di v( \cdot , \cu_{m+1}, p ,q )  -  \ahom_n^{-1} q + p \right)_{y+\cu_n}  \right|^2 \right] } \qquad & \\ &
\leq 3  |\Ze_{m,n}|^{-1} \sum_{y \in \Ze_{m,n}}\E \left[  \left| \left( \di v( \cdot , \cu_{m+1}, p ,q ) -  \di v( x ,y + \cu_{n}, p ,q ) \right)_{y + \cu_{n}}  \right|^2 \right]  \\
& \hspace{4mm} + 3  |\Ze_{m,n}|^{-1} \sum_{y \in \Ze_{m,n}} \E \left[ \left|    \left(  \di v( \cdot  ,y + \cu_{n}, p ,q )  - \ahom_n^{-1} q + p  \right)_{y + \cu_{n}} \right|^2  \right] \\
& \hspace{4mm} + 3 |\ahom_m^{-1} q - \ahom_n^{-1} q|^2 \\
& \leq C \sum_{k = n}^{m} \tau_k + C \sum_{k=0}^n 3^{\beta(k-n)} + C 3^{-\beta n }.
\end{align*}
Combining this estimate and inequality~\eqref{multiscalepoincarev} shows
\begin{equation} \label{vaffineXm}
 \left\| v( \cdot , \cu_{m+1}, p ,q ) - l_{\ahom_m^{-1} q - p}^{m+1} \right\|^2_{\underline{L}^2 \Lambda^{r-1}(\cu_{m+1})}  \leq C \left( 1 + \left( \sum_{n=0}^m 3^n X_n^{\frac 12} \right)^2\right),
\end{equation}
where the random variable
\begin{equation*}
X_n :=  |\Ze_{m,n}|^{-1} \sum_{y \in \Ze_{m,n}} \left| \left(  \di v( x , \cu_{m+1}, p ,q ) \, dx - \ahom_m^{-1} q + p  \right)_{y + \cu_{n}} \right|^2  
\end{equation*}
satisfies the inequality
\begin{equation*}
\E \left[ X_n \right] \leq C \sum_{k = n}^{m} \tau_k + C \sum_{k=0}^n 3^{\beta(k-n)} \tau_k + C 3^{-\beta n }.
\end{equation*}
We then apply the Cauchy-Schwarz inequality,
\begin{equation*}
 \left( \sum_{n=0}^m 3^n X_n^{\frac 12} \right)^2 \leq \left( \sum_{n=0}^m 3^n \right) \left( \sum_{n=0}^m 3^n X_n \right) \leq C 3^m  \sum_{n=0}^m 3^n X_n,
\end{equation*}
and take the expectation to obtain
\begin{equation*}
\E \left[ \left( \sum_{n=0}^m 3^n X_n^{\frac 12} \right)^2  \right] \leq C 3^m \left(  \sum_{n=0}^m \sum_{k = n}^{m} 3^n \tau_k + C  \sum_{n=0}^m  \sum_{k=0}^n 3^{(1- \beta)n} 3^{\beta k} \tau_k + C  \sum_{n=0}^m  3^{(1-\beta) n} \right).
\end{equation*}
We then simplify the term on the right-hand side and write
\begin{equation*}
 \sum_{n=0}^m \sum_{k = n}^{m} 3^n \tau_k = \sum_ {k = 0}^m \sum_{n = 0}^k 3^n \tau_k \leq C \sum_ {k = 0}^m 3^k \tau_k
\end{equation*}
and
\begin{equation*}
 \sum_{n=0}^m  \sum_{k=0}^n 3^{(1- \beta)n} 3^{\beta k} \tau_k \leq \sum_{k=0}^m  \sum_{n=k}^n 3^{(1- \beta)n} 3^{\beta k} \tau_k \leq C 3^{(1 - \beta) m}  \sum_{k=0}^m 3^{\beta k} \tau_k.
\end{equation*}
Combining the three previous displays shows
\begin{equation} \label{expectationXm}
\E \left[ \left( \sum_{n=0}^m 3^n X_n^{\frac 12} \right)^2  \right] \leq  C 3^{( 2 -\beta )m} + C 3^{( 2 -\beta )m}  \sum_{k=0}^m 3^{\beta k} \tau_k + C 3^m \sum_ {k = 0}^m 3^k \tau_k.
\end{equation}
Moreover, since the exponent $\beta$ belongs to the interval $(0,1]$, we note that, for each pair of integers $k,m \in \N$ satisfying $k \leq m,$ one has the estimate $3^{ (k-m)} \leq 3^{\beta (k-m)}$. In particular the third term on the right side of~\eqref{expectationXm} is smaller than the second term on the right-hand side. Consequently, the estimate~\eqref{expectationXm} can be simplified and we obtain
\begin{equation*}
\E \left[ \left( \sum_{n=0}^m 3^n X_n^{\frac 12} \right)^2  \right] \leq  C 3^{( 2 -\beta )m} + C 3^{( 2 -\beta )m}  \sum_{k=0}^m 3^{\beta k} \tau_k.
\end{equation*}
Thus the estimate~\eqref{vaffineXm} becomes
\begin{equation*}
\E \left[ \left\| v( \cdot , \cu_{m+1}, p ,q ) - l_{\ahom_m^{-1} q - p}^{m+1} \right\|^2_{\underline{L}^2 \Lambda^{r-1}(\cu_{m+1})}  \right] \leq C 3^{( 2 -\beta )m} + C 3^{( 2 -\beta )m}  \sum_{k=0}^m 3^{\beta k} \tau_k.
\end{equation*}
The proof of Lemma~\ref{lemmavflat} is complete.
\end{proof}

Applying Lemma~\ref{lemmavflat} with the specific value $q = \ahom_m p$, we obtain that the form $v \left(\cdot , \cu_m , p , \ahom_m p \right)$ is close to the constant function equal to $0$ in the $L^2$-norm. Combining this result with the Caccioppoli inequality, we obtain that the $L^2$-norm of the exterior derivative $\di v \left(\cdot , \cu_m , p , \ahom_m p \right)$ is close to $0$ and can be estimated in terms of the subadditivity defect $\tau_n$. This implies that the expectation of the energy $\E \left[ J(\cu_m,p,\ahom_m p) \right]$ is small and can be estimated in terms of the subadditivity defect $\tau_n$. This result is proved in the following lemma.

\begin{lemma}\label{lemmaJflat}
There exists a constant $C(d,\lambda) < \infty$ such that, for every $m \in \N$ and $p \in B_1 \Lambda^{r}(\Rd)$,
\begin{equation*}
\E \left[ J(\cu_m,p,\ahom_m p) \right]  \leq C 3^{-\beta m} + C 3^{ -\beta m}  \sum_{k=0}^m 3^{\beta k} \tau_k.
\end{equation*}
\end{lemma}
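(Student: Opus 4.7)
The plan is to bound $\E[J(\cu_m, p, \ahom_m p)]$ by $C\,\E[\|\di v_m\|^2_{\underline{L}^2(\cu_m)}]$ for $v_m := v(\cdot, \cu_m, p, \ahom_m p)$, and then to control this $L^2$-norm using Lemma~\ref{lemmavflat} together with Caccioppoli. For the first reduction, the first variation~\eqref{e.firstvariation} applied with the test form $u = v_m$ yields $\fint_{\cu_m} \di v_m \wedge \a \di v_m = \fint_{\cu_m}(-p \wedge \a \di v_m + \di v_m \wedge \ahom_m p)$; substituting into the definition~\eqref{e.def.J} of $J$ gives the identity $J(\cu_m, p, \ahom_m p) = \tfrac12 \fint_{\cu_m} \di v_m \wedge \a \di v_m$, which is bounded by $C\|\di v_m\|^2_{\underline{L}^2(\cu_m)}$ via ellipticity~\eqref{ellipticityassumption}.

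I then introduce the larger-scale maximizer $v_{m+1} := v(\cdot, \cu_{m+1}, p, \ahom_m p)$ and split
\begin{equation*}
\|\di v_m\|^2_{\underline{L}^2(\cu_m)} \leq 2\|\di v_m - \di v_{m+1}\|^2_{\underline{L}^2(\cu_m)} + 2\|\di v_{m+1}\|^2_{\underline{L}^2(\cu_m)}.
\end{equation*}
For the first term, I apply~\eqref{controloptimizertau} to the partition of $\cu_{m+1}$ into its $3^d$ triadic subcubes of size $3^m$; averaging, taking expectation, and using stationarity~\eqref{stationarityassumption} yields $\E[\|\di v_m - \di v_{m+1}\|^2_{\underline{L}^2(\cu_m)}] \leq C\,\E[J(\cu_m, p, \ahom_m p) - J(\cu_{m+1}, p, \ahom_m p)]$. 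The decomposition~\eqref{decompositionJnunustar} then separates the $p$- and $q$-contributions of the decrement; since $|\ahom_m p| \leq C|p| \leq C$ for $p \in B_1\Lambda^r(\Rd)$, the quadratic scalings of $\nu$ and $\nu^*$ bound this decrement by $C\tau_m$.

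For the second term, I invoke the Caccioppoli inequality (Proposition~\ref{caccioppoli}) for the solution $v_{m+1} \in \A(\cu_{m+1})$ with the nested pair $\cu_m \subset \cu_{m+1}$ of distance $3^m$, giving after normalization
\begin{equation*}
\|\di v_{m+1}\|^2_{\underline{L}^2(\cu_m)} \leq \frac{C}{3^{2m}} \|v_{m+1}\|^2_{\underline{L}^2(\cu_{m+1})}.
\end{equation*}
Since $\ahom_m^{-1}(\ahom_m p) - p = 0$ and hence $l_0^{m+1} = 0$, Lemma~\ref{lemmavflat} applies directly to the right-hand side and produces $\E[\|v_{m+1}\|^2_{\underline{L}^2(\cu_{m+1})}] \leq C\,3^{(2-\beta)m}\bigl(1 + \sum_{k=0}^m 3^{\beta k}\tau_k\bigr)$. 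Multiplying by $3^{-2m}$ gives $\E[\|\di v_{m+1}\|^2_{\underline{L}^2(\cu_m)}] \leq C\,3^{-\beta m} + C\,3^{-\beta m}\sum_{k=0}^m 3^{\beta k}\tau_k$.

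Combining the two bounds and absorbing $\tau_m$ into the geometric sum via $\tau_m = 3^{-\beta m}(3^{\beta m}\tau_m)$ produces the claimed estimate. The main delicacy is the bookkeeping of normalizations: the Caccioppoli factor $3^{-2m}$ must cancel exactly the $3^{2m}$ growth contained in the bound of Lemma~\ref{lemmavflat} so as to leave the correct $3^{-\beta m}$ rate, and the quadratic scaling argument in the first term requires that $|\ahom_m p|$ remain bounded uniformly in $m$ (which is guaranteed by the uniform ellipticity of $\ahom_m$).
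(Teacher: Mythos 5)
Your proposal is correct and follows essentially the same route as the paper's proof: apply Lemma~\ref{lemmavflat} with $q = \ahom_m p$ (so that the affine comparator vanishes), pass from $v(\cdot,\cu_{m+1},p,\ahom_m p)$ to $\di v(\cdot,\cu_{m+1},p,\ahom_m p)$ on $\cu_m$ via Caccioppoli, transfer to $\di v(\cdot,\cu_m,p,\ahom_m p)$ using~\eqref{controloptimizertau} at the cost of a $C\tau_m$ error, and finally bound $J$ by the $L^2$ norm of the gradient of its maximizer. The only cosmetic difference is that you invoke the first-variation identity $J(\cu_m,p,q)=\tfrac12\fint_{\cu_m}\di v_m\wedge\a\di v_m$ where the paper invokes the quadratic response~\eqref{e.quadraticresponse} with $w=0$; these are equivalent ways to obtain $J\leq C\|\di v_m\|^2_{\underline{L}^2(\cu_m)}$, and your explicit justification of the $|\ahom_m p|\leq C$ uniformity and the $\tau_m$ absorption into the geometric sum fills in steps the paper leaves implicit.
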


\begin{proof}
Fix $p \in B_1 \Lambda^{r}(\Rd)$. By Lemma~\ref{lemmavflat}, we have the estimate
\begin{equation*}
\E \left[  \left\| v(\cdot , \cu_{m+1} , p , \ahom_m p ) \right\|^2_{\underline{L}^2 \Lambda^{r-1} ( \cu_{m+1})} \right]  \leq  C 3^{( 2 -\beta )m} + C 3^{( 2 -\beta )m}  \sum_{k=0}^m 3^{\beta k} \tau_k.
\end{equation*}
Applying the Caccioppoli inequality stated in Proposition~\ref{caccioppoli}, one obtains
\begin{equation} \label{caccioppoliv0}
\E \left[ \left\| \di v(\cdot , \cu_{m+1} , p , \ahom_m p ) \right\|^2_{\underline{L}^2 \Lambda^{r-1} ( \cu_{m})} \right]  \leq  C 3^{-\beta m} + C 3^{ -\beta m}  \sum_{k=0}^m 3^{\beta k} \tau_k.
\end{equation}
By the estimate~\eqref{controloptimizertau}, we have
\begin{equation*}
3^{-d} \sum_{y \in 3^m \Zd \cap \cu_{m+1}} \E \left[ \left\| \di v(\cdot , \cu_{m+1} , p , \ahom_m p )  - \di v(\cdot ,y + \cu_{m} , p , \ahom_m p ) \right\|^2_{\underline{L}^2 \Lambda^r (y + \cu_m)} \right] \leq C \tau_m.
\end{equation*}
In particular, this yields
\begin{equation*}
\E \left[ \left\| \di v(\cdot , \cu_{m+1} , p , \ahom_m p )  - \di v(\cdot , \cu_{m} , p , \ahom_m p ) \right\|^2_{\underline{L}^2 \Lambda^r (\cu_m)} \right] \leq C \tau_m.
\end{equation*}
Combining the previous display with the estimate~\eqref{caccioppoliv0} gives
\begin{align*}
\E \left[ \left\|\di v(\cdot , \cu_{m} , p , \ahom_m p ) \right\|^2_{\underline{L}^2 \Lambda^r (\cu_m)} \right] & \leq C \tau_m + C 3^{-\beta m} + C 3^{ -\beta m}  \sum_{k=0}^m 3^{\beta k} \tau_k \\
& \leq C 3^{-\beta m} + C 3^{ -\beta m}  \sum_{k=0}^m 3^{\beta k} \tau_k.
\end{align*}
Applying the estimate~\eqref{eq:171508}, we deduce
\begin{equation*}
\E \left[  J(\cu_m,p,\ahom_m p) \right] \leq  C 3^{-\beta m} + C 3^{ -\beta m}  \sum_{k=0}^m 3^{\beta k} \tau_k.
\end{equation*}
The proof of the lemma is complete.
\end{proof}

\subsection{Quantitative convergence of the subadditive quantity $J$.} In this section, we show how to deduce Proposition~\ref{maintheorem2} from Lemma~\ref{lemmaJflat}.

\begin{proof}[Proof of Proposition~\ref{maintheorem2}]
We first define the following quantity
\begin{equation*}
D_m = \sum_{i=1}^d \E \left[ J( \cu_m, e_i , \ahom_m e_i ) \right].
\end{equation*}
The reason motivating the introduction of this quantity is twofold:
\begin{itemize}
\item Since, for each integer $m \in \N$, the mapping $p \mapsto \E \left[ J( \cu_m, p , \ahom_m p ) \right]$ is a positive definite quadratic form, we have the estimate
\begin{equation*}
\frac 1d D_m \leq \sup_{p \in B_1\Lambda^{r}(\Rd)} \E \left[ J(\cu_m ,p ,\ahom_m p) \right] \leq D_m.
\end{equation*}
A consequence of the previous inequality is that if we can prove that the sequence $D_m$ converges to $0$ algebraically fast, then the same result is valid for the sequence $ \sup_{p \in B_1\Lambda^{r}(\Rd)} \E \left[ J(\cu_m ,p ,\ahom_m p) \right]$.
\item Second, the quantity $D_m$ satisfies the following estimate, for some constant $c := c(d , \lambda) > 0,$
\begin{equation} \label{Dntaun}
D_m  - D_{m+1} \geq c \tau_m,
\end{equation}
which we now prove. Using the definition of $\ahom_{m+1}$ stated in~\eqref{def.ahomU} and the decomposition of $J$ stated in~\eqref{eq:170808}, we know that for each fixed $p \in \Lambda^{r} (\Rd)$, the quadratic form
\begin{equation*}
q \mapsto \E \left[ J ( \cu_{m+1} , p , q ) \right] = \E \left[ J (\cu_{m+1},p, 0) \right] + \star \left( \frac 12 \ahom_{m+1}^{-1} q \wedge q - p \wedge q \right)
\end{equation*}
attains it minimum at $q = \ahom_{m+1} p.$ Consequently, we have the estimate
\begin{equation*}
D_{m+1} = \sum_{i=1}^d \E \left[ J( \cu_{m+1}, e_i , \ahom_{m+1} e_i ) \right] \leq \sum_{i=1}^d \E \left[ J( \cu_{m+1}, e_i , \ahom_{m} e_i ) \right].
\end{equation*}
Thus we can compute
\begin{align*}
D_m  - D_{m+1} & 
= \sum_{i=1}^d  \left( \E \left[ J( \cu_{m}, e_i , \ahom_{m} e_i ) \right] -  \E \left[ J( \cu_{m+1}, e_i , \ahom_{m+1} e_i ) \right] \right) \\
& \geq \sum_{i=1}^d  \left( \E \left[ J( \cu_{m}, e_i , \ahom_{m} e_i ) \right] -  \E \left[ J( \cu_{m+1}, e_i , \ahom_{m} e_i ) \right] \right) \\
& \geq  \sum_{i=1}^d  \left( \E \left[J ( \cu_{m}, e_i,0) \right] -  \E \left[ J ( \cu_{m+1}, e_i,0 ) \right] \right) \\ &\hspace{5mm}  +  \sum_{i=1}^d  \left( \E \left[ J ( \cu_{m}, 0,\ahom_{m} e_i ) \right] -  \E \left[ J( \cu_{m+1},0, \ahom_{m} e_i ) \right] \right) \\
& \geq c \sup_{p \in B_1 \Lambda^{r}( \Rd) }  \E \left[ J ( \cu_{m}, p,0) \right] -  \E \left[ J ( \cu_{m+1}, p,0) \right] \\
& \hspace{5mm} + c \sup_{q \in B_1 \Lambda^{r}( \Rd) }  \E \left[ J ( \cu_{m},0, q ) \right] -  \E \left[ J ( \cu_{m+1},0, q) \right] \\
& \geq c \tau_m.
\end{align*}
\end{itemize}
We then split the proof of Theorem~\ref{maintheorem} into 4 steps.
\begin{itemize}
\item In Step 1, we let $\beta$ be the exponent which appears in Lemma~\ref{lemmaJflat}, we define the quantity
\begin{equation*}
\tilde{D}_m := 3^{-\frac{\beta m}{2} } \sum_{n = 0}^m 3^{\frac{\beta n}{2}} D_n,
\end{equation*}
and we prove the estimate, for some constants  $\theta(d , \lambda) \in (0,1)$ and $C(d, \lambda) < \infty$,
\begin{equation} \label{dn+1thetadn}
\tilde{D}_{m+1} \leq \theta \tilde{D}_m + C 3^{- \frac{\beta m}{2}}.
\end{equation}
\item In Step 2, we iterate the estimate obtained in Step 1 and prove the inequality, with the value $\alpha = - \frac{\ln \theta}{\ln 3} > 0$,
\begin{equation*}
\tilde{D}_{m} \leq C 3^{- \alpha m}.
\end{equation*}
\item In Step 3, we deduce from Step 2 the estimate
\begin{equation*}
\E \left[ J \left( \cu_m , p , \ahom p \right) \right] \leq C 3^{- \alpha m}.
\end{equation*} 
\item In Step 4, we use the result of Step 3 and a concentration inequality to complete the proof of Theorem~\ref{maintheorem}.
\end{itemize}

\smallskip

\textit{Step 1.}
By the inequality~\eqref{Dntaun} and the bound $D_0 \leq C$, we have
\begin{equation*}
\tilde{D}_{m} - \tilde{D}_{m+1} =  3^{-\frac{\beta m}{2} } \sum_{n = 0}^m 3^{\frac{\beta n}{2}}(  D_{n} - D_{n+1} ) - C  3^{- \frac{\beta m}{2}} \geq  3^{-\frac{\beta m}{2} } \sum_{n = 0}^m 3^{\frac{\beta n}{2}} \tau_n - C  3^{- \frac{\beta m}{2}}.
\end{equation*}
In particular, the previous estimate gives
\begin{equation*}
\tilde{D}_{m+1} \leq \tilde{D}_{m} + C 3^{- \frac{\beta m}{2}}.
\end{equation*}
From the previous inequality and Lemma~\ref{lemmaJflat}, we compute
\begin{align*}
\tilde{D}_{m+1} \leq \tilde{D}_{m} + D_0 3^{- \frac{\beta m}{2}} & = 3^{-\frac{\beta m}{2} } \sum_{n = 0}^m 3^{\frac{\beta n}{2}} D_n  + D_0 3^{- \frac{\beta m}{2}} \\
& \leq C 3^{-\frac{\beta m}{2} } \sum_{n = 0}^m 3^{\frac{\beta n}{2}}  \left( 3^{-\beta n} + 3^{ -\beta n}  \sum_{k=0}^n 3^{\beta k} \tau_k \right) + C 3^{- \frac{\beta m}{2}} \\
& \leq C 3^{-\frac{\beta m}{2} } \sum_{n = 0}^m  \sum_{k=0}^n 3^{- \frac{\beta n}{2}}  3^{\beta k} \tau_k + C 3^{- \frac{\beta m}{2}} \\
& \leq C 3^{-\frac{\beta m}{2} } \sum_{k = 0}^m  \sum_{n=k}^m 3^{- \frac{\beta n}{2}}  3^{\beta k} \tau_k + C 3^{- \frac{\beta m}{2}} \\
& \leq C 3^{-\frac{\beta m}{2} } \sum_{k = 0}^m  3^{\frac{\beta k}{2}} \tau_k + C 3^{- \frac{\beta m}{2}}.
\end{align*}
Combining the two previous displays gives
\begin{equation*}
\tilde{D}_{m+1} \leq C \left( \tilde{D}_{m} - \tilde{D}_{m+1} \right) +  C 3^{- \frac{\beta m}{2}}.
\end{equation*}
A rearrangement of this inequality yields~\eqref{dn+1thetadn}.

\smallskip

\textit{Step 2.} Iterating the estimate~\eqref{dn+1thetadn} gives
\begin{equation*}
 \tilde{D}_{m} \leq \theta^m D_0 + C \sum_{k = 0}^n \theta^k 3^{- \frac{\beta (m -k)}{2}}.
\end{equation*}
Without loss of generality, we can assume $\theta > 3^{- \frac \beta 2}$ (since we can make $\theta$ closer to $1$ if necessary). With this assumption, the second term on the right-hand side can be estimated and we have
\begin{equation*}
 \sum_{k = 0}^n \theta^k 3^{- \frac{\beta (m -k)}{2}} \leq C \theta^m.
\end{equation*}
Combining this with the fact that $\tilde{D}_0 = D_0 \leq C$, we obtain
\begin{equation*}
\tilde{D}_{m} \leq C \theta^m,
\end{equation*}
which can be rewritten, with $\alpha = - \frac{\ln \theta}{\ln 3} > 0$,
\begin{equation*}
\tilde{D}_{m} \leq C 3^{- \alpha m} .
\end{equation*}

\textit{Step 3.} We note that, by the estimate~\eqref{Dntaun} and the posititivity of the sequence $\left(D_m\right)_{m \in \N}$,
\begin{equation*}
c \tau_m \leq D_m - D_{m+1} \leq D_m\leq \tilde{D}_{m}  \leq C 3^{- \alpha m}. 
\end{equation*}
Thus by~\eqref{ahomnahomm}, for every $q \in B_1\Lambda^{d-r}(\Rd)$ and every $m \in \N$,
\begin{align*}
| \ahom^{-1} q - \ahom_m^{-1} q |^2 = \lim\limits_{l \rightarrow \infty} | \ahom_l^{-1} q - \ahom_m^{-1} q |^2  \leq \sum_{k = m}^\infty \tau_k \leq C  \sum_{k = m}^\infty 3^{-\alpha k} \leq C 3^{-\alpha m}.
\end{align*}
Using the ellipticity assumption~\eqref{ellipticityassumption}, we deduce, for each $p \in B_1\Lambda^{d-r}(\Rd)$,
\begin{equation*}
| \ahom p - \ahom_m p |^2  \leq C 3^{-\alpha m}.
\end{equation*}
Using that $J$ is quadratic, one obtains that there exists a constant $C( d , \lambda) < \infty$ such that, for each $m \in \N$, each $p,p' \in \Lambda^{r}(\Rd)$ and each $q, q' \in \Lambda^{d-r}(\Rd)$,
\begin{equation*}
\left| J(\cu_m , p , q) - J(\cu_m , p' , q') \right| \leq C (|p - p'| + |q - q'|) (|p| + |p'| + |q| + |q'|).
\end{equation*}
Consequently, for each $p \in B_1 \Lambda^{r}(\Rd)$ and each $m \in \N$,
\begin{equation*}
\left| J(\cu_m , p , \ahom p) - J(\cu_m , p ,  \ahom_m p) \right|  \leq C  |\ahom p -  \ahom_m p| \left( 1 + |\ahom p| +  |\ahom_n p| \right)  \leq C  3^{-\frac{\alpha}{2} m}.
\end{equation*}
Redefining $\alpha = \frac \alpha 2$ completes the proof of the quantitative homogenization estimate~\eqref{statementmaintheorem2}. 

\smallskip

\textit{Step 4.} We can now complete the proof of Proposition~\ref{maintheorem2} by upgrading the stochastic integrability. We use the following concentration inequality which can be found in~\cite[Lemma 2.14]{armstrong2017quantitative}.

\begin{lemma}
Fix a real number $K > 0$. Suppose that $U \mapsto \rho(U)$ is a (random) map from the set of bounded Lipschitz domains to $[0, K)$ such that $\rho(U)$ is $\mathcal{F}(U)$-measurable
and, whenever $U$ is the disjoint union of $U_1 , \ldots, U_k$ up to a set of zero Lebesgue measure,
\begin{equation*}
\rho(U) \leq \sum_{i=1}^{k} \frac{|U_i|}{|U|} \rho(U_i).
\end{equation*}
Then there exists a universal constant $C < \infty$ such that, for every $m,n \in \N$,
\begin{equation*}
\rho \left( \cu_{n+m+1} \right) \leq 2 \E\left[ \rho \left(\cu_n \right) \right] + \O_1\left( C K 3^{-md}\right).
\end{equation*}
\end{lemma}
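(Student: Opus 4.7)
The plan is to combine the subadditivity of $\rho$ with the unit range dependence assumption~\eqref{independenceassumption} via a coloring argument, and then extract the quantitative concentration from a Bernstein-type estimate. The crucial observation is that the variance bound $\var(\rho(\cu_n)) \leq K \E[\rho(\cu_n)]$ (which comes from $0 \leq \rho \leq K$), combined with the factor $2$ in front of the expectation on the right-hand side, permits absorption of the sub-Gaussian fluctuation into the mean, yielding the rate $O(K/N)$ rather than the Hoeffding rate $O(K/\sqrt{N})$.

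Concretely, set $\mathcal{Z} := 3^n \Zd \cap \cu_{n+m+1}$ so that $\{ z + \cu_n : z \in \mathcal{Z}\}$ partitions $\cu_{n+m+1}$ up to a null set, with $|\mathcal{Z}| = 3^{d(m+1)}$. Subadditivity reduces the task to controlling $|\mathcal{Z}|^{-1} \sum_{z \in \mathcal{Z}} \rho(z + \cu_n)$. To introduce independence, I color $\mathcal{Z}$ into $2^d$ classes $\mathcal{Z}_c$ according to the residue of $z/3^n$ modulo $2$ in each coordinate; any two distinct $z, z' \in \mathcal{Z}_c$ then satisfy $\dist(z + \cu_n, z' + \cu_n) \geq 3^n \geq 1$. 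By the $\F(U)$-measurability of $\rho$ together with~\eqref{stationarityassumption} and~\eqref{independenceassumption}, the variables $\{ \rho(z + \cu_n) \}_{z \in \mathcal{Z}_c}$ are i.i.d., bounded by $K$, with common mean $\mu := \E[\rho(\cu_n)]$.

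For each color average $S_c := |\mathcal{Z}_c|^{-1} \sum_{z \in \mathcal{Z}_c} \rho(z + \cu_n)$, Bernstein's inequality applied with $\sigma^2 \leq K \mu$ gives, for all $t \geq 0$,
\[
\P[S_c - 2\mu \geq t] = \P[S_c - \mu \geq \mu + t] \leq \exp\!\left( -\frac{c\,|\mathcal{Z}_c|\,(\mu + t)^2}{K \mu + K(\mu + t)} \right) \leq \exp\!\left( -\frac{3\,|\mathcal{Z}_c|\, t}{8 K} \right),
\]
so $(S_c - 2\mu)_+ \leq \O_1(C K / |\mathcal{Z}_c|)$. Writing
\[
\frac{1}{|\mathcal{Z}|} \sum_{z \in \mathcal{Z}} \rho(z + \cu_n) \; - \; 2 \mu \;=\; \frac{1}{2^d} \sum_{c = 1}^{2^d} (S_c - 2 \mu)
\]
and applying Corollary~\ref{coroe0mean}(i) yields a bound of the form $\O_1(C' K / |\mathcal{Z}|)$ on the fluctuation. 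Since $|\mathcal{Z}|^{-1} = 3^{-d(m+1)} \leq 3^{-md}$, this produces the claimed inequality after absorbing $d$-dependent constants into $C$.

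The main delicate point is the concentration step: a direct use of Hoeffding only produces the rate $K / \sqrt{|\mathcal{Z}|} = K\, 3^{-d(m+1)/2}$, which is too weak. The improvement to $K / |\mathcal{Z}|$ comes specifically from pairing Bernstein with the variance bound $\sigma^2 \leq K \mu$ and with the factor $2$ in front of $\E[\rho(\cu_n)]$: since the deviation $2\mu - \mu = \mu$ is at least of the same order as the standard deviation, the estimate sits in the sub-exponential regime of Bernstein, which is precisely what the $\O_1$ notation records.
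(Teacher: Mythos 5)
Your proposal is correct and follows essentially the same route as the cited source (Armstrong--Kuusi--Mourrat, Lemma~2.14): partition $\cu_{n+m+1}$ into translates of $\cu_n$, invoke subadditivity, color into $2^d$ families so that unit-range dependence yields i.i.d.\ collections, and then apply a Bernstein-type deviation estimate. Your key observation is exactly the right one: Hoeffding alone only gives the Gaussian rate $K 3^{-dm/2}$, and it is the variance bound $\sigma^2 \le K\mu$ (from $0\le \rho\le K$) together with the absorbing factor $2$ in front of $\E[\rho(\cu_n)]$ that places the deviation in the sub-exponential regime and produces the stated rate $K 3^{-dm}$.

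One small slip worth fixing: since $3^{m+1}$ is odd, the $2^d$ color classes $\mathcal{Z}_c$ do not all have the same cardinality, so the identity
\begin{equation*}
\frac{1}{|\mathcal{Z}|}\sum_{z\in\mathcal{Z}}\rho(z+\cu_n) - 2\mu \;=\; \frac{1}{2^d}\sum_{c=1}^{2^d}\bigl(S_c - 2\mu\bigr)
\end{equation*}
does not literally hold. You should instead write the left side as the weighted average $\sum_c \frac{|\mathcal{Z}_c|}{|\mathcal{Z}|}\bigl(S_c-2\mu\bigr)$; the weight $|\mathcal{Z}_c|/|\mathcal{Z}|$ then cancels the $1/|\mathcal{Z}_c|$ in the per-color bound $(S_c-2\mu)_+\le\O_1(CK/|\mathcal{Z}_c|)$, and an application of Corollary~\ref{coroe0mean}(i) still produces $\O_1\bigl(C 2^d K/|\mathcal{Z}|\bigr)$, so nothing in the argument changes.
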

Applying this result with
$
\rho(U) := \sup_{p \in B_1} J\left(U,p,\ahom p \right)
$
gives, for each $m,n \in \N$,
\begin{equation*}
\rho(\cu_{n+m+1}) \leq  2 \E\left[ \rho \left(\cu_n \right) \right] + \O_1\left( C 3^{-md}\right) \leq C3^{- n \alpha} + \O_1\left( C 3^{-md}\right).
\end{equation*}
Taking $n=m$ yields, for every $n \in \N$,
\begin{equation*}
\rho(\cu_{2n+1}) \leq C3^{- n \alpha} + \O_1\left( C 3^{-nd}\right).
\end{equation*}
By redefining $\alpha := \min \left( \frac{\alpha}{2}, \frac d2 \right) $, we obtain, for each $n \in \N$,
\begin{equation*}
\rho(\cu_{n}) \leq C3^{- n \alpha} + \O_1\left( C 3^{-n   \alpha}\right) \leq  \O_1 \left( C 3^{-n   \alpha} \right).
\end{equation*}
The proof of Proposition~\ref{maintheorem2} is now complete.
\end{proof}

\subsection{Quantitative convergence of the exterior derivative of the maximizer $v$.} Before turning to the proof of Theorem~\ref{homogenizationtheorem} in the next section, we state and prove the following proposition, which is a consequence of Theorem~\ref{maintheorem}, and gives quantitative information about the flatness of the maximizers. Let us first recall that the form $ l_{p}^{m}$ is defined as the unique element of $C^{r-1}_\di \left( \cu_m \right)^\perp$ which satisfies $\di  l_{p}^{m} = p$. 

\begin{proposition} \label{weakcvggradientandflux}
There exist an exponent $\alpha := \alpha(d , \lambda) > 0$ and a constant $C := C(d, \lambda ) < \infty$ such that for each integer $r \in \{1 , \ldots, d \}$, each pair of forms $\left( p,q \right) \in B_1 \Lambda^r (\Rd) \times  B_1 \Lambda^{d-r} (\Rd)$ and each integer $m \in \N$,
\begin{multline} \label{gradientfluxcvg}
3^{-m} \left\|\di v \left( \cdot, \cu_m, p,q \right) - \left(\ahom^{-1} q - p \right) \right\|_{\underline{H}^{-1} \Lambda^r (\cu)} + 3^{-m} \left\|\a \di v \left( \cdot, \cu_m, p,q \right) - \left( q - \ahom p \right) \right\|_{\underline{H}^{-1} \Lambda^{d-r} (\cu)} \\ \leq \O_2 \left( C 3^{-m\alpha} \right),
\end{multline}
and
\begin{equation} \label{gradientfluxcvg2}
 \left\| v( \cdot , \cu_{m}, p ,q ) - l_{\ahom^{-1} q - p}^{m} \right\|_{\underline{L}^2 \Lambda^{r-1}(\cu_{m+1})} \leq \O_2 \left( C 3^{-\alpha m} \right).
\end{equation}

\end{proposition}

\begin{proof}
We first prove the estimate~\eqref{gradientfluxcvg}. The proof is split into 2 steps.
\begin{itemize}
\item \textit{Step 1.} We prove that, for each $q \in B_1 \Lambda^{d-r} (\Rd)$ and every $m,n \in \N$ such that $m \geq n$,
\begin{equation} \label{quantdvmeanvalue}
3^{d(n-m)}\sum_{y \in \mathcal{Z}_{n,m}} \left| \left( \di v \left(\cdot , \cu_m , 0 , q\right) - \ahom^{-1} q   \right)_{y + \cu_n} \right|^2 \leq \O_1 \left(C 3^{-\alpha n} \right).
\end{equation}
Similarly, 
for each $p \in B_1 \Lambda^{r} (\Rd)$ and every $m,n \in \N$ such that $m \geq n$,
\begin{equation} \label{quantdvmeanvaluebis}
3^{d(n-m)} \sum_{y \in \mathcal{Z}_{n,m}} \left| \left( \a \di v \left(\cdot , \cu_m , p , 0\right) + \ahom p  \right)_{y + \cu_n} \right|^2 \leq \O_1 \left( C 3^{-\alpha n} \right).
\end{equation}
\item \textit{Step 2.} We deduce the estimate~\eqref{gradientfluxcvg} from the previous step and the multiscale Poincar\'e inequality.
\end{itemize}
\medskip
\textit{Step 1.} We first deal with the case $m=n$. In this specific case, the estimate~\eqref{quantdvmeanvalue} reads
\begin{equation*}
 \left| \left( \di v \left(\cdot , \cu_n , 0 , q\right) - \ahom^{-1} q   \right)_{ \cu_n} \right|^2 \leq  \O_1 \left( C 3^{-\alpha n} \right).
\end{equation*}
To prove this inequality, we note that, by the first variation for the energy $J$,
\begin{equation*}
J(\cu_n, 0 , q) = \frac 12 \left( \di v(\cdot, \cu_n , 0 , q) \right)_{\cu_n} \wedge q.
\end{equation*}
Moreover, the map $q \mapsto \left( \di v(\cdot, \cu_m , 0 , q) \right)_{\cu_m}$ is linear and symmetric since, for each $q,q' \in \Lambda^{d-r} (\Rd)$,
\begin{align*}
\left( \di v \left( \cdot, \cu_m , 0 , q' \right) \right)_{\cu_m} \wedge q & = \int_{\cu_m} \di v \left( \cdot, \cu_m , 0 , q' \right)  \wedge \a \di v \left( \cdot, \cu_m , 0 , q \right) \\ & = \fint_{\cu_m} \di v \left( \cdot, \cu_m , 0 , q \right)  \wedge \a \di v \left( \cdot, \cu_m , 0 , q' \right) \\ & = \left( \di v \left( \cdot, \cu_m , 0 , q \right) \right)_{\cu_m} \wedge q'.
\end{align*}
A combination of the two previous arguments and Theorem~\ref{maintheorem} gives
\begin{align} \label{quantmeandvspecialcase}
\sup_{q \in B_1 \Lambda^{d-r}}   \left| \left( \di v \left(\cdot , \cu_n , 0 , q\right) - \ahom^{-1} q   \right)_{ \cu_n} \right|^2 & \leq C \sup_{q \in B_1 \Lambda^{d-r}} \left| J(\cu_n, 0 , q)  - \ahom^{-1} q \wedge q \right|^2 \\ & \leq \O_1 \left( C 3^{-n \alpha} \right). \notag
\end{align}
To obtain the general case $m \geq n$ from the specific case $m=n$, we compute
\begin{align*}
3^{d(n-m)}\lefteqn{ \sum_{y \in \mathcal{Z}_{n,m}} \left| \left(\di v \left(\cdot , \cu_m , 0,q\right) - \ahom^{-1} q  \right)_{y + \cu_n} \right|^2} \qquad &\\ & \leq C3^{d(n-m)} \sum_{y \in \mathcal{Z}_{n,m}} \left| \left( \di v \left(\cdot , \cu_m , 0,q\right) - \di v \left(\cdot , y + \cu_n , 0,q\right)  \right)_{y + \cu_n} \right|^2  \\ & \quad + C 3^{d(n-m)} \sum_{y \in \mathcal{Z}_{n,m}} \left| \left( \di v \left(\cdot , y + \cu_n , 0,q\right) - \ahom^{-1} q  \right)_{y + \cu_n} \right|^2 \\
& \\ & \leq C3^{d(n-m)} \sum_{y \in \mathcal{Z}_{n,m}} \left\| \di v \left(\cdot , \cu_m , 0,q\right) - \di v \left(\cdot , y + \cu_n , 0,q\right)  \right\|_{\underline{L}^2 \left(y + \cu_n \right) }^2  \\ & \quad + C 3^{d(n-m)} \sum_{y \in \mathcal{Z}_{n,m}} \left| \left( \di v \left(\cdot , y + \cu_n , 0,q\right) - \ahom^{-1} q  \right)_{y + \cu_n} \right|^2 \\ &
\leq C3^{d(n-m)} \sum_{y \in \mathcal{Z}_{n,m}} J( y + \cu_n, 0,q) -   J( \cu_m, 0,q) \\ & \quad + C 3^{d(n-m)} \sum_{y \in \mathcal{Z}_{n,m}} \left| \left( \di v \left(\cdot , y + \cu_n , 0,q\right) - \ahom^{-1} q  \right)_{y + \cu_n} \right|^2.
\end{align*}
To deal with the first term on the right-hand side, we note that, for each $y \in \mathcal{Z}_{m,n}$,
\begin{align*}
 J( y + \cu_n, 0,q) -   J( \cu_m, 0,q) &  \leq \left|  J( y + \cu_n, 0,q) -  \ahom^{-1} q \wedge q \right| + \left|  J( \cu_m, 0,q) -  \ahom^{-1} q \wedge q \right| \\ & \leq \O_2( C 3^{-\alpha n}) + \O_2( C 3^{-\alpha m}) \\& \leq \O_2 (C 3^{-\alpha n} ),
\end{align*}
by the stationarity assumption~\eqref{stationarityassumption}. Using the inequality~\eqref{e.0mean}, we obtain
\begin{equation*}
3^{d(n-m)} \sum_{y \in \mathcal{Z}_{n,m}} J( y + \cu_n, 0,q) -   J( \cu_m, 0,q)  \leq \O_2(C 3^{-n \alpha}).
\end{equation*}
To treat the second term on the right-hand side, we have by the stationarity assumption~\eqref{stationarityassumption} and the estimate~\eqref{quantmeandvspecialcase}, for each $y \in \mathcal{Z}_{m,n}$,
\begin{equation*}
\left| \left( \di v \left(\cdot , y + \cu_n , 0,q\right) - \ahom^{-1} q  \right)_{y + \cu_n} \right|^2 \leq \O_1 \left( C 3^{-\alpha n} \right).
\end{equation*}
Using the inequality~\eqref{e.0mean}, we obtain
\begin{equation*}
 3^{d(n-m)} \sum_{y \in \mathcal{Z}_{n,m}} \left| \left( \di v \left(\cdot , y + \cu_n , 0,q\right) - \ahom^{-1} q  \right)_{y + \cu_n} \right|^2 \leq \O_1 \left(C 3^{-\alpha n} \right).
\end{equation*}
The proof of~\eqref{quantdvmeanvalue} is thus complete. The proof of~\eqref{quantdvmeanvaluebis} is similar, the details are left to the reader.

\medskip

\textit{Step 2.} From Step 1 and (ii) of Proposition~\ref{propertiesbigO}, one has
\begin{equation*}
\sum_{n = 0}^{m-1} 3^{d(n-m)} \sum_{y \in \mathcal{Z}_{n,m}} \left| \left( \di v \left(\cdot , y + \cu_n , 0,q\right) - \ahom^{-1} q  \right)_{y + \cu_n} \right|^2  \leq  \O_1 \left(C 3^{(1-\alpha) m} \right)
\end{equation*}
and
\begin{equation*}
\sum_{n = 0}^{m-1} 3^{d(n-m)} \sum_{y \in \mathcal{Z}_{n,m}} \left| \left( \a \di v \left(\cdot , y + \cu_n , p,0\right) - \ahom p  \right)_{y + \cu_n} \right|^2  \leq  \O_1 \left(C 3^{(1-\alpha) m} \right).
\end{equation*}
By the multiscale Poincar\'e inequality stated in Proposition~\ref{multscalepoincare}, the bound on the $L^2$-norm of the exterior derivative $\di v$ stated in~\eqref{eq:171508}, and the previous estimates, one obtains for each $(p,q) \in B_1 \Lambda^r (\Rd) \times  B_1 \Lambda^{d-r} (\Rd)$,
\begin{equation*}
\left\|\di v \left( \cdot, \cu_m, p,q \right) - \left(\ahom^{-1} q - p \right) \right\|_{\underline{H}^{-1} \Lambda^r (\cu)} + \left\|\a \di v \left( \cdot, \cu_m, p,q \right) - \left( q - \ahom p \right) \right\|_{\underline{H}^{-1} \Lambda^{d-r} (\cu)} \\ \leq \O_2 \left( C 3^{(1-\alpha)m} \right).
\end{equation*}
Dividing both sides of the previous inequality by $3^m$ yields the inequality~\eqref{gradientfluxcvg}.

\smallskip

We then deduce the estimate~\eqref{gradientfluxcvg2} from the estimate~\eqref{gradientfluxcvg}. We first note that the form $ v( \cdot , \cu_{m}, p ,q ) - l_{\ahom^{-1} q - p}^{m}$ belongs to the space $C^{r-1}_\di \left( \cu_m \right)^\perp$. Applying Proposition~\ref{Poincareineq} and the estimate~\eqref{gradientfluxcvg} implies the estimate~\eqref{gradientfluxcvg2}. The proof of Proposition~\ref{weakcvggradientandflux} is complete.
\end{proof}

\section{Homogenization of the Dirichlet problem} \label{section6}

The goal of this section is to study the Dirichlet problem for the equation $\di \a \di u = 0$ and to establish Theorem~\ref{homogenizationtheorem}. We first prove existence and uniqueness of solution for this equation.

\begin{proposition} \label{exaunell.prop}
Let $U$ be a bounded smooth domain of $\Rd$ and $r$ be an integer in $\{1 , \ldots, d\}$. Let $f $ be an element of the space $H^1_\di \Lambda^{r-1} (U)$, then for any measurable map $ \a : \Rd \rightarrow \mathcal{L} \left( \Lambda^{r}(\Rd) , \Lambda^{(d-r)} \left( \Rd \right) \right)$ satisfying the assumptions~\eqref{ellipticityassumption} and~\eqref{symmetryassumption}, there exists a unique solution in $f + H^1_{\di , 0} \Lambda^{r-1} (U) \cap \left( C^{r-1}_{\di,0}(U) \right)^\perp$ of the equation
\begin{equation} 
\label{exaunell.2}
 \left\{ \begin{aligned}[l]
 	\di \left( \a \di u \right) = 0 & \hspace{3mm} \mbox{in} ~U, \\
       \mathbf{t} u = \mathbf{t} f & \hspace{3mm} \mbox{on} ~\partial U,
    \end{aligned}\right.
\end{equation}
in the sense that, for each form $v \in H^1_{\di , 0} \Lambda^{r-1} (U)$,
$
\int_U \di u \wedge \a \di v = 0.
$
The solution satisfies the estimate, for some constant $C := C(d , \lambda , U ) < \infty$,
\begin{equation*}
\left\| u \right\|_{H^1_\di\Lambda^{r-1}(U)} \leq C \left\| \di f \right\|_{L^2\Lambda^{r}(U)}.
\end{equation*}
Moreover if we enlarge the set of admissible solutions to the space $f + H^1_{\di , 0} \Lambda^{r-1} (U)$, one looses the uniqueness property, but if $v , w \in f + H^1_{\di , 0} \Lambda^{r-1} (U) $ are two solutions of~\eqref{exaunell.2}, then the difference $v - w$ belongs to the space $C^{r-1}_{\di , 0}(U).$
\end{proposition}

\begin{proof}
The existence and uniqueness of such a solution are obtained by minimizing the quantity
$
\mathcal{J} (v) := \left\langle \di f  + \di v , \di f +  \di v\right\rangle_U 
$
on the space $H^1_{\di , 0} \Lambda^{r -1} (U) \cap \left( C^r_{\di , 0}(U) \right)^\perp$ and requires to use the Poincar\'e inequality stated in Proposition~\ref{PoincareineqH10conv}. The techniques are standard, and we omit the details.
\end{proof}

The rest of this section is devoted to the proof of Theorem~\ref{homogenizationtheorem}.

\begin{proof}[Proof of Theorem~\ref{homogenizationtheorem}]
Without loss of generality, one can assume that the volume of the domain $U$ is equal to $1$. We fix an integer $l> 0$, such that the $0 \ll 3^l \ll \ep^{-1} $. The value $3^l \ep$ represents the thickness of a boundary layer we need to remove in the argument, it is chosen at the end of the proof and depends only on~$\epsilon$. For a radius $r > 0$, denote by $U_r := \left\{ x \in U \,:\, \dist \left( x , \partial U \right) > r \right\}$. For a subset $I \subseteq \left\{ 1 , \ldots , d \right\}$ of cardinality $r$, we recall the notation $\di x_I$ introduced in~\eqref{notationconventiondxI}. We denote by $ \phi_{m,I}$ the corrector in the cube $\cu_m$, which is defined by the following formula
\begin{equation*}
\phi_{m,I} := - v( \cdot , \cu_{m}, \di x_I) - l_{\di x_I}^{m}.
\end{equation*}
By Proposition~\ref{weakcvggradientandflux}, the corrector satisfies the following estimates
\begin{multline} \label{propphi1}
3^{-m}\left\| \phi_{m,I} \right\|_{\underline{L}^2 \Lambda^{r-1}(\cu_{m+1})}  + 3^{-m} \left\|\di \phi_{m,I} \right\|_{\underline{H}^{-1} \Lambda^r (\cu)} + 3^{-m} \left\|\a \left( \di x_I + \di\phi_{m,I} \right) - \ahom \di x_I\right\|_{\underline{H}^{-1} \Lambda^{d-r} (\cu)} \\ \leq \O_2 \left( C 3^{-m\alpha} \right).
\end{multline}
Let $m$ be the smallest integer such that
\begin{equation} \label{eq:defintegerm}
U \subseteq \varepsilon \cu_m,
\end{equation}
this implies that there exists a constant $C:= C(U) < \infty$ such that $\ep 3^m \leq C$. We then define the two-scale expansion, with the notation convention $\di u : =  \sum_{|I| = r } (\di u)_I  \di x_I$,
\begin{equation} \label{ad0}
w^\varepsilon_0 (x ) := u(x) + \varepsilon \zeta_l(x) \sum_{|I| = r } (\di u)_I (x) \phi_{m,I} \left( \frac{x}{\varepsilon} \right),
\end{equation}
where $\zeta_l \in C^\infty_c (U)$ is a smooth cutoff function satisfying, for every integer $k \in \N$,
\begin{equation} \label{definitionzetar} 
0 \leq \zeta_l \leq 1, \hspace{3mm} \zeta_l = 1 \mbox{ in } U_{2l}, \hspace{3mm}  \zeta_l = 0 \mbox{ in } U \setminus U_{l}, \hspace{3mm}  \left| \nabla^k \zeta_l \right| \leq C(k,d,U) l^{-k}.
\end{equation}
Note that $w^\varepsilon_0$ belongs to the space $f +H^1_{\di , 0} \Lambda^{r-1} (U) $. Since it is more convenient to work with an element of $f +H^1_{\di , 0} \Lambda^{r-1} (U) \cap \left( C^{r-1}_{\di,0}(U) \right)^\perp$ (to have the Poincar\'e inequality), we further define
\begin{equation*}
w^\varepsilon := f + \mathrm{Proj}_{\left( C^{r-1}_{\di,0}(U) \right)^\perp} \left(w^\varepsilon_0 - f  \right).
\end{equation*}
where $ \mathrm{Proj}_{\left( C^{r-1}_{\di,0}(U) \right)^\perp}$ denotes the $L^2$-orthogonal projection on the space $\left( C^{r-1}_{\di,0}(U) \right)^\perp$.
Note that $w^\varepsilon \in f +H^1_{\di , 0} \Lambda^{r-1} (U) \cap \left( C^{r-1}_{\di,0}(U) \right)^\perp$ by construction and that it satisfies
\begin{equation} \label{ad1}
\di w^\varepsilon_0 = \di w^\varepsilon ~\mbox{in} ~U.
\end{equation}
We then consider the map
\begin{equation*}
\di \left( \a \left( \frac{\cdot }{\varepsilon} \right) \di w^\varepsilon \right) : \left\{ \begin{aligned}
H^1_{\di , 0} \Lambda^{r-1} (U)  \rightarrow \R, \hspace{10mm}\\
v \rightarrow \int_{U} \di w^\varepsilon  \wedge \a \left( \frac{x}{\varepsilon} \right)  \di v .\\
\end{aligned} \right.
\end{equation*}
which belongs to the space $H^{-1}_{\di} \Lambda^{r-1}$.
The strategy of the proof is to compare $u^\varepsilon$ to the function $w^\varepsilon$. The proof is split into 6 steps which are summarized below:

\begin{itemize}
\item In Step 1, we prove that there exists a constant $C := C(U) < \infty$ such that
\begin{equation*}
\left\| \di \left( \a \left( \frac{\cdot }{\varepsilon} \right) \di w^\varepsilon \right) \right\|_{H^{-1}_{\di} \Lambda^{d - r+1}(U)}  \leq C \left\| \di \left( \a \left( \frac{\cdot }{\varepsilon} \right) \di w^\varepsilon \right) \right\|_{H^{-1} \Lambda^{d - r+1}(U)}.
\end{equation*}
\item In Step 2, we prove the estimate on the two-scale expansion $w^\ep$,
\begin{equation*}
\left\| \di \left( \a \left( \frac{\cdot }{\varepsilon} \right) \di w^\varepsilon \right) \right\|_{H^{-1} \Lambda^{r-1} (U)} \leq \left\{ \begin{aligned}
C \left\| \di f \right\|_{H^1 \Lambda^r (U)} \left( l^{\frac{1}{d-2}} + \O_2 \left(\frac{ \varepsilon^{\alpha}}{l^{3 + d/2}} \right) \right) & ~ \mbox{if} ~d \geq 3,\\
C \left\| \di f \right\|_{H^1 \Lambda^r (U)} \left( l^{\frac 14} + \O_2 \left( \frac{ \varepsilon^{\alpha}}{l^{3 + d/2}}  \right) \right) \hspace{3mm} & ~ \mbox{if} ~d =2 .\\
\end{aligned} \right.
\end{equation*}
\item In Step 3, we deduce from Steps 1 and 2 the estimate
\begin{equation*}
\left\| u^\varepsilon - w^\varepsilon \right\|_{H^1_\di \Lambda^{r-1} (U)} \leq \left\{ \begin{aligned}
C \left\| \di f \right\|_{H^1 \Lambda^r (U)} \left( l^{\frac{1}{d-2}} + \O_2 \left(\frac{ \varepsilon^{\alpha}}{l^{3 + d/2}} \right) \right) & ~ \mbox{if} ~d \geq 3,\\
C \left\| \di f \right\|_{H^1 \Lambda^r (U)} \left( l^{\frac 14} + \O_2 \left( \frac{ \varepsilon^{\alpha}}{l^{3 + d/2}}  \right) \right) \hspace{3mm} & ~ \mbox{if} ~d =2 .\\
\end{aligned} \right.
\end{equation*}
\item In Step 4, we prove that for each form $w \in H^1_{\di , 0} \Lambda^{r -1} (U) \cap \left( C^{r-1}_{\di , 0}(U) \right)^\perp$, one has the estimate
\begin{equation*}
\|  w \|_{L^2 \Lambda^{r-1} (U)} \leq \| \di w \|_{\underline{H}^{-1} \Lambda^r (U)}.
\end{equation*}
\iffalse
where the $\underline{H}^{-1} \Lambda^r (U)$-norm is defined according to the formula
\begin{equation*}
\| \omega \|_{\underline{H}^{-1} \Lambda^r (U)} :=  \sup \left\{  \left\langle \omega , \alpha \right\rangle_U \, : \, \alpha \in H^1 \Lambda^r (U), \,  \left| \left( \alpha \right)_U \right| + \| \nabla \alpha\|_{L^2 \Lambda^r (U)}  \leq 1 \right\}.
\end{equation*}
\fi
\item In Step 5, we deduce from Step 4 and the estimate~\eqref{propphi1} the inequality
\begin{equation*}
\left\| \di w^\varepsilon - \di u \right\|_{\underline{H}^{-1} \Lambda^r(U)} \iffalse \left\| \a \left( \frac{\cdot}{\varepsilon} \right)\di w^\varepsilon - \ahom \di u \right\|_{\underline{H}^{-1}\Lambda^{d-r}(U)}\fi \leq  \O_2 \left( \frac{C\left\| \di f \right\|_{H^1\Lambda^r (U)}\varepsilon^\alpha}{l^{2 + d/2}}  \right).
\end{equation*}
\item In Step 6, we combine the results of Steps 3 and 5 and set the value  $l := \ep^{\frac{\alpha (3 + d/2)}{2}}$ to obtain the estimate
\begin{equation*}
\left\| u^{\varepsilon} - u \right\|_{L^2\Lambda^r(U)} + \left\| \di u^{\varepsilon} - \di u \right\|_{\underline{H}^{-1}\Lambda^r(U)} \leq \O_2 \left( C \ep^\alpha \right),
\end{equation*}
by reducing the size of the exponent $\alpha$ in the right side.
\end{itemize}

\smallskip

\iffalse In this step, we show that the norm $H^{-1}_{\di} \Lambda^{d - r+1}(U)$ defined in~\eqref{defH-1d0} is equivalent to the norm
\begin{equation*}
\left\| \di \left( \a \left( \frac{\cdot }{\varepsilon} \right) \di w^\varepsilon \right) \right\|_{H^{-1} \Lambda^{d - r+1}(U)} \\ := \sup \left\{ \int_{U} \di w^\varepsilon  \wedge \a \left( \frac{x}{\varepsilon} \right)  \di v  :  v \in H^1_{ 0} \Lambda^{r-1}(U) \mbox{ s.t. }\left\| v \right\|_{H^1 \Lambda^r(U)} \leq 1  \right\}.
\end{equation*}
\fi

\textit{Step 1.} The main result of this step is a consequence of the following property. There exists a constant $C := C(d , \lambda, U) < \infty$, such that for each form $v \in H^1_{\di, 0} \Lambda^{r-1}(U)$, there exists a form $w \in H^1_0 \Lambda^{r-1}(U)$ such that
\begin{equation*}
\di w = \di v~ \mbox{and} ~ \left\| w \right\|_{H^1 \Lambda^r(U)} \leq C  \left\| v \right\|_{H^1_\di \Lambda^r(U)}.
\end{equation*}
To prove this, we follow the arguments of the proof of~\cite[Theorem 1.1]{MMM08}. Let $\left( O_j \right)_{1 \leq j \leq N}$ be a finite, open covering of $\bar{U}$ such that $O_j \cap U$ is a Lipschitz bounded star-shaped domain. Then let $\left( \chi_j \right)_{1 \leq j \leq N}$ be a smooth partition of unity such that $\supp \chi_j \subseteq O_j$ for each integer $j \in \{1 , \ldots , d \}$. Note that the form $\chi_j v $ belongs to the space $ H^{1}_{\di , 0} \Lambda^{r-1}\left(O_j \cap U\right) $. By Proposition~\ref{Poincarelemma}, there exists a form $w_j \in H^1_0 \Lambda^{r-1} \left(O_j \cap U\right)$ satisfying 
$
\di w_j = \chi_j v$ and $\left\| w_j  \right\|_{H^{1} \Lambda^{r-1}\left(O_j \cap U\right)} \leq C\left\| \chi_j v \right\|_{H^{1}_{\di} \Lambda^{r-1}\left(O_j \cap U\right)} .
$
We then extend the forms $ \chi_j v$ and $w_j$ by $0$ outside the set $O_j \cap U$, so that they belong to the spaces $H^{1}_{\di} \Lambda^{r-1}\left(\Rd\right) $ and $H^{1} \Lambda^{r-1}\left(\Rd\right)$ respectively and satisfy
$
\di w_j = \di \left( \chi_j v \right)$ in $\Rd.
$

We then define
$
w := \sum_{j=1}^{N}  w_j,
$
so that
$
w \in H^1_0 \Lambda^{r-1}\left( U\right) ~\mbox{and} ~ \di w = \sum_{j=1}^N \di \left( \chi_j v \right) = \di v. 
$
We also have the estimate
\begin{equation*}
\left\| w  \right\|_{H^1 \Lambda^{r-1}\left( U\right)} \leq C \left\| v  \right\|_{H^1_{\di} \Lambda^{r-1}\left( U\right)} .
\end{equation*}
This completes the proof of Step 1.

\medskip

\textit{Step 2.} We show the $H^{-1} \Lambda^{r-1} (U)$-estimate
\begin{equation} \label{homogthmstep1}
\left\| \di \left( \a \left( \frac{\cdot }{\varepsilon} \right) \di w^\varepsilon \right) \right\|_{H^{-1} \Lambda^{r-1} (U)} \leq \left\{ \begin{aligned}
C \left\| \di f \right\|_{H^1 \Lambda^r (U)} \left( l^{\frac{1}{d-2}} + \O_2 \left(\frac{ \varepsilon^{\alpha}}{l^{3 + d/2}} \right) \right) & ~ \mbox{if} ~d \geq 3,\\
C \left\| \di f \right\|_{H^1 \Lambda^r (U)} \left( l^{\frac 14} + \O_2 \left( \frac{ \varepsilon^{\alpha}}{l^{3 + d/2}}  \right) \right) \hspace{3mm} & ~ \mbox{if} ~d =2 .\\
\end{aligned} \right.
\end{equation}
We first compute the exterior derivative of $w^\varepsilon$, by~\eqref{ad0} and~\eqref{ad1},
\begin{align*}
\di w^\varepsilon & = \di u +  \zeta_l \sum_{|I| = r } (\di u)_I  \di \phi_{m,I} \left( \frac{\cdot }{\varepsilon} \right) + \varepsilon  \sum_{|I| = r } \di \left( \zeta_l  (\di u)_I \right) \wedge \phi_{m,I} \left( \frac{\cdot }{\varepsilon} \right)  \\
			& =  (1 - \zeta_l) \di u+ \sum_{|I| = r } \zeta_l  (\di u)_I  \left( \di x_I + \di \phi_{m,I} \left( \frac{\cdot }{\varepsilon} \right) \right) +  \varepsilon  \sum_{|I| = r } \di \left( \zeta_l  (\di u)_I \right) \phi_{m,I} \left( \frac{\cdot }{\varepsilon} \right).
\end{align*}
From this we deduce, in the weak sense,
\begin{multline*}
\di \left( \a\left( \frac{\cdot}{\varepsilon} \right) \di w^\varepsilon \right) = \di \left( \a  \left( \frac{\cdot }{\varepsilon} \right) \left(  (1 - \zeta_l) \di u + \varepsilon  \sum_{|I| = r } \di \left( \zeta_l  (\di u)_I \right) \phi_{m,I} \left( \frac{\cdot }{\varepsilon} \right) \right)\right)  \\+  \sum_{|I| = r } \di \left( \zeta_l  (\di u)_I \right) \wedge \a \left( \frac{\cdot }{\varepsilon} \right)  \left( \di x_I + \di \phi_{m,I} \left( \frac{\cdot }{\varepsilon} \right) \right).
\end{multline*}
Since $u$ satisfies the equation $\di ( \ahom \di u ) = 0$, one sees that
\begin{equation*}
\sum_{|I| = r}\di \left( \zeta_l \left( \di u \right)_I \right) \wedge \ahom \di x_I = \di \left( \zeta_l \ahom \di u \right) = - \di \left( ( 1- \zeta_l ) \ahom \di u \right).
\end{equation*}
Consequently one has the equality, in the weak sense,
\begin{multline*}
\di \left( \a\left( \frac{\cdot}{\varepsilon} \right) \di w^\varepsilon \right) = \di \left( \left( \a  \left( \frac{\cdot }{\varepsilon} \right) - \ahom \right) (1 - \zeta_l) \di u \right)+  \varepsilon  \sum_{|I| = r } \di \left( \a  \left( \frac{\cdot }{\varepsilon} \right) \di \left( \zeta_l  (\di u)_I \right) \wedge \phi_{m,I} \left( \frac{\cdot }{\varepsilon} \right) \right)  \\+  \sum_{|I| = r } \di \left( \zeta_l  (\di u)_I \right) \wedge \left( \a \left( \frac{\cdot }{\varepsilon} \right)  \left( \di x_I + \di \phi_{m,I} \left( \frac{\cdot }{\varepsilon} \right) \right) - \ahom \di x_I \right).
\end{multline*}
It follows that
\begin{align*}
\lefteqn{ \left\| \di \left( \a\left( \frac{\cdot}{\varepsilon} \right) \di w^\varepsilon \right)  \right\|_{H^{-1} \Lambda^{r-1} (U)} } & \\ &
			\leq \sum_{|I| = r} \left\| \di \left( \zeta_l  (\di u)_I \right) \right\|_{W^{1,\infty}(U)} \left\| \a \left( \frac{\cdot }{\varepsilon} \right)  \left( \di x_I + \di \phi_{m,I} \left( \frac{\cdot }{\varepsilon} \right) \right) - \ahom \di x_I \right\|_{H^{-1} \Lambda^{r} (U)} \\ &
 			\quad + \left\| \left( \a  \left( \frac{\cdot }{\varepsilon} \right) - \ahom \right) (1 - \zeta_l) \di u \right\|_{L^2\Lambda^r(U)} \\ &
			\quad + \varepsilon  \sum_{|I| = r } \left\|  \a  \left( \frac{\cdot }{\varepsilon} \right) \di \left( \zeta_l  (\di u)_I \right) \wedge \phi_{m,I} \left( \frac{\cdot }{\varepsilon} \right) \right\|_{L^2\Lambda^r(U)} \\ & 
			=: T_1 + T_2 + T_3.
\end{align*}
To bound the term $T_1$ on the right-hand side, we first appeal to the interior regularity estimate stated in Proposition~\ref{propellipticregularity} and the assumption~\eqref{definitionzetar} on the function $\zeta_l$, so that one has
\begin{equation} \label{ad3}
 \left\| \di \left( \zeta_l  (\di u)_I \right) \right\|_{W^{1,\infty}(U)}  \leq \frac{C}{l^{3+d/2}} \|\di f\|_{L^2 \Lambda^r (U)}.
\end{equation}
Then, by Proposition~\ref{weakcvggradientandflux}, the continuity of the inclusion $\underline{H}^{-1} \Lambda^{r} (U) \subseteq H^{-1} \Lambda^{r} (U) $, one has the estimate
\begin{equation*}
\left\| \a \left( \frac{\cdot }{\varepsilon} \right)  \left( \di x_I + \di \phi_{m,I} \left( \frac{\cdot }{\varepsilon} \right) \right) - \ahom \di x_I \right\|_{H^{-1} \Lambda^{r} (U)} \leq \O_2 \left( C 3^{-\alpha m} \right).
\end{equation*}
We then note that, by the definition of the integer $m$ given in~\eqref{eq:defintegerm}, we have the estimate $\ep 3^m \leq C$. This implies
\begin{equation*}
\left\| \a \left( \frac{\cdot }{\varepsilon} \right)  \left( \di x_I + \di \phi_{m,I} \left( \frac{\cdot }{\varepsilon} \right) \right) - \ahom \di x_I \right\|_{H^{-1} \Lambda^{r} (U)} \leq \O_2 \left( C \ep^{-\alpha} \right).
\end{equation*}
Combining the previous display and the estimate~\eqref{ad3}, we obtain the estimate for the term $T_1$
\begin{equation*}
 T_1  \leq  \frac{C}{l^{3+d/2}} \| \di f\|_{H^1(U)} \O_2 \left( \varepsilon^{ \alpha} \right).
\end{equation*}
The bound for $T_3$ is similar, by the estimate~\eqref{propphi1} and the Poincar\'e inequality stated in Proposition~\ref{PoincareineqH10conv}, one has
\begin{equation*}
\varepsilon \left\|   \phi_{m,I} \left( \frac{\cdot }{\varepsilon} \right) \right\|_{L^2\Lambda^r(U)}  \leq \O_2 \left( C \varepsilon^{ \alpha} \right).
\end{equation*}
By the estimate~\eqref{ad3}, one deduces
\begin{equation*}
T_3  \leq  \frac{C}{l^{3+d/2}} \| \di f\|_{H^1(U)} \O_2 \left( \varepsilon^{ \alpha} \right).
\end{equation*}
To estimate the second term $T_2$, the strategy is to apply the boundary regularity result proved in the appendix, Proposition~\ref{globalest}. Since the form $\di f$ is assumed to be in the space $H^1 \Lambda^{r} (U)$ and the set $U$ is assumed to be smooth, one has
\begin{equation*}
\left\| \di u \right\|_{H^1 \Lambda^r (U)} \leq C \left\| \di \ahom \di f \right\|_{L^2 \Lambda^{d -r+1} (U)} \leq  C \left\| \di f \right\|_{H^1 \Lambda^{r} (U)}.
\end{equation*}
This implies, via the Sobolev imbedding theorem, that the exterior derivative $\di u$ belongs to the space $ L^{\frac{2d}{d-2}} \Lambda^{r} (U)$ if $d \geq 3$ and to the space $\bigcap_{p\geq 1} L^p \Lambda^{r} (U)$ if $d = 2$, with the estimates
\begin{equation} \label{eq:est2507}
\left\{ \begin{aligned}
\left\| \di u \right\|_{ L^{\frac{2d}{d-2}} \Lambda^r (U)} & \leq C& \left\| \di f \right\|_{H^1 \Lambda^{r} (U)} &\hspace{5mm} \mbox{if} ~~ d \geq 3,\\
\left\| \di u \right\|_{ L^{p} \Lambda^r (U)} \hspace{3.7mm}& \leq C_p& \left\| \di f \right\|_{H^1 \Lambda^{r} (U)}& \hspace{5mm}  \mbox{if} ~~ d =2,\\
\end{aligned} \right.
\end{equation}
for some constants $C:= C(U ) < \infty$ and $C_p := C(p , U ) < \infty$. We now set $p = 4$ (but any exponent $p$ strictly larger than $2$ would be sufficient). Using the estimate~\eqref{eq:est2507} and the fact that the function $(1 - \zeta_l)$ is supported in the set $U \setminus U_{2l}$, gives, by the H\"older inequality,
\begin{equation*}
T_2 \leq C \left\| \di u \right\|_{L^2\Lambda^r\left(U \setminus U_{2l} \right) } \leq \left\{ \begin{aligned}
 C \left| U \setminus U_{2l} \right|^{\frac 1{d-2}}  \left\| \di u \right\|_{L^{\frac{2d}{d-2}} \Lambda^r \left(U \right)} & ~ \mbox{if} ~d \geq 3,\\
 C \left| U \setminus U_{2l} \right|^{\frac 1{4}}  \left\| \di u \right\|_{L^4  \Lambda^r \left(U \right)} \hspace{6.4mm} & ~ \mbox{if} ~d =2 .\\
\end{aligned} \right.
\end{equation*}
Combining the few previous results completes the proof of the estimate~\eqref{homogthmstep1}.

\medskip

\textit{Step 3.} We deduce from Steps 1 and 2 the following $H^1 \Lambda^{r-1} (U)$-estimate
\begin{equation} \label{homogthmstep2}
\left\| u^\varepsilon - w^\varepsilon \right\|_{H^1_\di \Lambda^{r-1} (U)} \leq \left\{ \begin{aligned}
C \left\| \di f \right\|_{H^1 \Lambda^r (U)} \left( l^{\frac{1}{d-2}} + \O_2 \left(\frac{ \varepsilon^{\alpha}}{l^{3 + d/2}} \right) \right) & ~ \mbox{if} ~d \geq 3,\\
C \left\| \di f \right\|_{H^1 \Lambda^r (U)} \left( l^{\frac 14} + \O_2 \left( \frac{ \varepsilon^{\alpha}}{l^{3 + d/2}}  \right) \right) \hspace{3mm} & ~ \mbox{if} ~d =2 .\\
\end{aligned} \right.
\end{equation}
Testing the estimate~\eqref{homogthmstep1} with the form $u^\varepsilon - w^\varepsilon \in H^1_{\di, 0}\Lambda^{r-1} (U)$, and using the main result of Step 1, one obtains
\begin{align*}
\left| \int_U \di \left( u^\varepsilon - w^\varepsilon \right) (x) \wedge \a \left( \frac{x}{\varepsilon} \right) \di w^\varepsilon (x) \right| & \leq \left\| u^\varepsilon - w^\varepsilon \right\|_{H^1_{\di} \Lambda^{r-1}(U)} \left\| \di \left( \a \left( \frac{\cdot}{\varepsilon} \right) \di w^\varepsilon \right) \right\|_{H^{-1}_{\di} \Lambda^{r-1} (U)} \\
											& \leq \left\| u^\varepsilon - w^\varepsilon \right\|_{H^1_{\di} \Lambda^{r-1}(U)} \left\| \di \left( \a \left( \frac{\cdot}{\varepsilon} \right) \di w^\varepsilon \right) \right\|_{H^{-1} \Lambda^{r-1} (U)}.
\end{align*}
Using that $u^\varepsilon$ is a solution of the equation $\di \left( \a \left( \frac{x}{\ep} \right) \di u^\ep \right) = 0$, we obtain
\begin{equation*}
\int_U \di \left( u^\varepsilon - w^\varepsilon \right) (x) \wedge \a \left( \frac{x}{\varepsilon} \right) \di u^\varepsilon (x) = 0.
\end{equation*}
Combining the two previous displays with the Poincar\'e inequality yields
\begin{align*}
\left\| \di u^\varepsilon - \di w^\varepsilon \right\|_{L^2\Lambda^r(U)}^2  & \leq C \int_U \di \left( u^\varepsilon - w^\varepsilon \right) (x) \wedge \a \left( \frac{x}{\varepsilon} \right) \di \left( u^\varepsilon - v^\varepsilon \right) (x) \\
										& \leq C \left\| u^\varepsilon - w^\varepsilon \right\|_{H^1_{\di} \Lambda^r(U)} \left\| \di \left( \a \left( \frac{\cdot}{\varepsilon} \right) \di w^\varepsilon \right) \right\|_{H^{-1}_{\di}\Lambda^{r-1} (U)} \\
										& \leq C \left\| \di u^\varepsilon - \di w^\varepsilon \right\|_{L^2\Lambda^r(U)} \left\| \di \left( \a \left( \frac{\cdot}{\varepsilon} \right) \di w^\varepsilon \right) \right\|_{H^{-1}\Lambda^{r-1} (U)}.
\end{align*}
Thus
\begin{equation*}
\left\| \di u^\varepsilon - \di w^\varepsilon \right\|_{L^2\Lambda^r(U)} \leq C \left\| \di \left( \a \left( \frac{\cdot}{\varepsilon} \right) \di w^\varepsilon  \right) \right\|_{H^{-1} \Lambda^{r-1}(U)}.
\end{equation*}
Using the estimate~\eqref{homogthmstep1} and another application of the Poincar\'e inequality completes the proof of~\eqref{homogthmstep2}.

\medskip

\textit{Step 4.} Recall that at the beginning of the proof, we assumed that the volume of the set $U$ is equal to $1$.
The objective of this step is to prove that, for each form $w \in H^1_{\di , 0} \Lambda^{r -1} (U) \cap \left( C^{r-1}_{\di , 0}(U) \right)^\perp$, one has the estimate
\begin{equation} \label{mainest.step4homthm}
\|  w \|_{L^2 \Lambda^{r-1} (U)} \leq \| \di w \|_{\underline{H}^{-1} \Lambda^r (U)}.
\end{equation}
To this end, we let $v$ be the unique solution in the space $H^1_{\di , 0} \Lambda^{r -1} (U) \cap \left( C^{r-1}_{\di , 0}(U) \right)^\perp$ of the problem
\begin{equation} \label{eqdirhom}
\left\{ \begin{aligned}
\delta \di v = w &~ \mbox{in} ~U,\\
\mathbf{t} v = 0 & ~\mbox{on} ~\partial U.
\end{aligned} \right.
\end{equation}
The existence and uniqueness of such a solution are obtained by minimizing the quantity
$
\mathcal{J} (v) := \left\langle \di v , \di v\right\rangle_U - \left\langle w ,  v\right\rangle_U
$
on the space $H^1_{\di , 0} \Lambda^{r -1} (U) \cap \left( C^r_{\di , 0}(U) \right)^\perp$ and requires to use the Poincar\'e inequality stated in Proposition~\ref{PoincareineqH10conv}; the details are omitted.

\smallskip

We note that the form $v$ satisfies the following properties:
$
\di \di v =0 \in L^2 \Lambda^{r+1} (U), ~\delta \di v =w \in L^2 \Lambda^{r-1} (U)$ and $\mathbf{t} \di v = 0.
$
 The third property $\mathbf{t} \di v = 0$ is implied by the condition $\mathbf{t} v = 0 $ and can be deduced from a direct computation. As a consequence, the Gaffney-Friedrichs inequality stated in Proposition~\ref{GFCDsmooth} implies that the exterior derivative $\di v$ belongs to the space $H^1 \Lambda^{r} (U)$, together with the estimate
\begin{equation*}
\left\|\di v \right\|_{H^1 \Lambda^{r} (U)} \leq C \left( \left\|w \right\|_{L^2 \Lambda^{r-1} (U)} +   \left\|\di v \right\|_{L^2 \Lambda^{r} (U)}  \right).
\end{equation*}
Testing the equation~\eqref{eqdirhom} with the form $v$ and using the Poincar\'e inequality shows
\begin{equation*}
\left\|\di v \right\|_{L^2 \Lambda^{r} (U)} \leq C  \left\|w \right\|_{L^2 \Lambda^{r-1} (U)}.
\end{equation*}
Combining the two previous displays implies
\begin{equation} \label{h11divv}
\left\|\di v \right\|_{H^1 \Lambda^{r} (U)} \leq C \left\|w \right\|_{L^2 \Lambda^{r-1} (U)}.
\end{equation}
Testing the equation~\eqref{eqdirhom} with $w$ then shows
\begin{align*}
\left\langle \di w , \di v \right\rangle_U  =\left\langle  w ,  w \right\rangle_U =  \|  w \|_{L^2 \Lambda^{r-1} (U)}.
\end{align*}
On the other hand, by the definition of the $\underline{H}^{-1} \Lambda^r (U)$-norm and the estimate~\eqref{h11divv}, one has
\begin{equation*}
\left\langle \di w , \di v \right\rangle_U  \leq \| \di w \|_{\underline{H}^{-1} \Lambda^r (U)} \| \di v \|_{H^{1} \Lambda^r (U)}
							\leq \| \di w \|_{\underline{H}^{-1} \Lambda^r (U)} \| w \|_{L^2 \Lambda^{r-1} (U)}.
\end{equation*}
Combining the two previous displays completes the proof of Step 4.

\medskip

\textit{Step 5.} The objective of this step is to prove the following estimate
\begin{equation} \label{mainestimateStep3homog}
\left\| \di w^\varepsilon - \di u \right\|_{\underline{H}^{-1} \Lambda^r(U)} \iffalse \left\| \a \left( \frac{\cdot}{\varepsilon} \right)\di w^\varepsilon - \ahom \di u \right\|_{\underline{H}^{-1}\Lambda^{d-r}(U)}\fi \leq  \O_2 \left( \frac{C\left\| \di f \right\|_{H^1\Lambda^r (U)}\varepsilon^\alpha}{l^{2 + d/2}}  \right).
\end{equation}
First, one has the equality
\begin{equation*}
\di w^\varepsilon  - \di u = \di \left(  \varepsilon \zeta_l \sum_{|I| = r } (\di u)_I \phi_{m,I} \left( \frac{\cdot}{\varepsilon} \right) \right)
\end{equation*}
and therefore, since $w^\varepsilon - u \in H^1_{\di , 0} \Lambda^{r-1} (U)$, one has
\begin{equation*}
\left\| \di w^\varepsilon - \di u \right\|_{\underline{H}^{-1} \Lambda^r(U)} \leq C \left\| \di u \right\|_{L^\infty(U_l)} \sum_{|I| = r } \varepsilon  \left\| \phi_{m,I} \left( \frac{\cdot}{\varepsilon} \right) \right\|_{L^2\Lambda^r(U)}.
\end{equation*}
But with the same proof as in Step 4, with the cube $\varepsilon \cu_m$ instead of the set $U$, one has
\begin{equation*}
 \varepsilon  \left\| \phi_{m,I} \left( \frac{\cdot}{\varepsilon} \right) \right\|_{L^2\Lambda^r(U)}  \leq  \varepsilon  \left\| \phi_{m,I} \left( \frac{\cdot}{\varepsilon} \right) \right\|_{L^2\Lambda^r\left(\varepsilon \cu_m \right)} 
		 \leq  C \varepsilon  \left\| \phi_{m,I} \right\|_{\underline{L}^2\Lambda^r\left( \cu_m \right)}
		  \leq   C \varepsilon  \left\| \di \phi_{m,I}  \right\|_{\underline{H}^{-1}\Lambda^r\left( \cu_m \right)}.
\end{equation*}
We then apply  the estimate~\eqref{propphi1} and note that, by the definition of the integer $m$ given in~\eqref{eq:defintegerm}, one has the estimate $\ep 3^m \leq C$. This implies
\begin{equation*}
\varepsilon  \left\| \phi_{m,I} \left( \frac{\cdot}{\varepsilon} \right) \right\|_{L^2\Lambda^r(U)} \leq C \varepsilon  \left\| \di \phi_{m,I} \right\|_{\underline{H}^{-1}\Lambda^r\left( \cu_m \right)} \leq  \O_2  \left( C \varepsilon^\alpha \right).
\end{equation*}
Then, by Proposition~\ref{propellipticregularity}, one obtains
\begin{equation*}
\left\| \di w^\varepsilon - \di u \right\|_{\underline{H}^{-1} \Lambda^r(U)} \leq \| \di f \|_{L^2\Lambda^r (U)} \O_2 \left(\frac{C\varepsilon^{\alpha}}{l^{1+d/2}} \right). 
\end{equation*}
This completes the proof of the estimate~\eqref{mainestimateStep3homog}.

\smallskip

\textit{Step 6.} The conclusion. We first use the triangle inequality
\begin{align*}
\left\| \di u^{\varepsilon} - \di u \right\|_{\underline{H}^{-1}\Lambda^r(U)} & \leq \left\| \di u^{\varepsilon} - \di w^{\varepsilon} \right\|_{\underline{H}^{-1}\Lambda^r(U)} + \left\| \di w^{\varepsilon} - \di u \right\|_{\underline{H}^{-1}\Lambda^r(U)} \\
										& \leq \left\| \di u^{\varepsilon} - \di w^{\varepsilon} \right\|_{\underline{L}^2 \Lambda^{r}(U)}  + \left\| \di w^{\varepsilon} - \di u \right\|_{\underline{H}^{-1}\Lambda^r(U)}.
\end{align*}
By the main estimate~\eqref{homogthmstep2} of Step 3 and~\eqref{mainestimateStep3homog} of Step 5, we obtain
\begin{equation} \label{eq:mainest.step6}
\left\| \di u^{\varepsilon} - \di u \right\|_{\underline{H}^{-1}\Lambda^r(U)} \leq \left\{ \begin{aligned}
C \left\| \di f \right\|_{H^1 \Lambda^r (U)} \left( l^{\frac{1}{d-2}} + \O_2 \left(\frac{ \varepsilon^{\alpha}}{l^{3 + d/2}} \right) \right) & ~ \mbox{if} ~d \geq 3,\\
C \left\| \di f \right\|_{H^1 \Lambda^r (U)} \left( l^{\frac 14} + \O_2 \left( \frac{ \varepsilon^{\alpha}}{l^{3 + d/2}}  \right) \right) \hspace{3mm} & ~ \mbox{if} ~d =2 .
\end{aligned} \right.
\end{equation}
Finally, the bound for $\left\|  u^{\varepsilon} - u \right\|_{L^2\Lambda^{r-1}(U)} $ is obtained from the previous inequality and the main estimate~\eqref{mainest.step4homthm} of Step 4. Indeed, since $u - u^\varepsilon$ belongs to the space $H^1_{\di , 0} \Lambda^{r -1} (U) \cap \left( C^{r-1}_{\di , 0}(U) \right)^\perp$, one has
\begin{equation*}
\left\| u^{\varepsilon} - u \right\|_{L^2\Lambda^{r-1}(U)} \leq C \left\| \di u^{\varepsilon} - \di u \right\|_{\underline{H}^{-1}\Lambda^r(U)}.
\end{equation*}
The estimate~\eqref{eq:mainest.step6} is valid for any value of $l \in (0,1]$, in particular we can choose $l := \ep^{\frac{\alpha (3 + d/2)}{2}}$ so that $\frac{ \varepsilon^{\alpha}}{l^{3 + d/2}} = \ep^{\frac{\alpha}{2}}$. By reducing the size of the exponent $\alpha$, one obtains
\begin{equation*}
\left\| u^{\varepsilon} - u \right\|_{L^2\Lambda^r(U)} + \left\| \di u^{\varepsilon} - \di u \right\|_{\underline{H}^{-1}\Lambda^r(U)} \leq \O_2 \left( C \ep^\alpha \right).
\end{equation*}
The proof of Theorem~\ref{homogenizationtheorem} is complete.
\end{proof}

\section{Duality} \label{section7}
The goal of this section is to study a duality property between the homogenization of $r$-forms and $(d-r)$-forms. We note that similar results were obtained independently by Serre ~\cite{serre2017periodic} in the case of periodic coefficients. For each environment $\a \in \Omega_r$ and each $x \in \Rd$, the operator $\a (x) \in \mathcal{L} \left( \Lambda^{r}(\Rd),   \Lambda^{(d-r)}(\Rd) \right) $ satisfies the ellipticity assumption~\eqref{ellipticityassumption}, so it is invertible and one can define the inverse operator $(\a (x))^{-1} \in \mathcal{L} \left( \Lambda^{(d-r)}(\Rd),   \Lambda^{r}(\Rd) \right)$, which satisfies the symmetry assumption~\eqref{symmetryassumption} and the following ellipticity condition
\begin{equation} \label{newellipticityassumption}
\frac 1 \lambda |p|^2 \leq \a(x)^{-1} p \wedge p \leq  \lambda |p|^2, \, \forall p \in \Lambda^{(d-r)}(\Rd).
\end{equation}
We denote by
\begin{multline*}
\Omega'_{d-r} := \Big\{ \a (\cdot) \, : \, \a : \Rd \rightarrow \mathcal{L} \left( \Lambda^{(d-r)}(\Rd) , \Lambda^{r} \left( \Rd \right) \right)  \mbox{ is Lebesgue measurable}  \\   \mbox{ and satisfies \eqref{symmetryassumption} and \eqref{newellipticityassumption}}\Big\}.
\end{multline*}
We equip this set with a family of sigma-algebras, for each open set $U \subseteq \Rd$,
\begin{multline*}
\mathcal{F}_r'(U) := \bigg\{ \sigma \mbox{-algebra on } \Omega_r' \mbox{ generated by the family of maps } \\
						 \a \rightarrow \int_U p \wedge \a(x) q \phi(x), \, p,q \in \Lambda^{r}(\Rd), \, \phi \in C^\infty_c (U) \bigg\}.
\end{multline*}
We also define the operator inv to be the mapping
\begin{equation*}
\mbox{inv}  : \left\{ \begin{aligned}
\Omega_{r} & \rightarrow \Omega'_{d-r}, \\
\a  & \rightarrow \a^{-1}.
\end{aligned}  \right.
\end{equation*}
We then define $\mbox{inv}_* \P$ the probability measure defined on the measured space $(\Omega_{d-r}', \F_{d-r}')$ by, for each $A \in \F_{d-r}'$,
$
\mbox{inv}_* \P_r (A) := \P_r \left( \mbox{inv}^{-1} \, A \right).
$
The probability space $(\Omega_{d-r}', \F_{d-r}', \mbox{inv}_* \P_r )$ satisfies the stationarity assumption~\eqref{stationarityassumption} and the finite range dependence assumption~\eqref{independenceassumption}. We then define, for each $(p,q) \in  \Lambda^{(d-r)}(\Rd) \times  \Lambda^{r}(\Rd)$ and each $m \in \N$,
\begin{equation*}
J_{\mathrm{inv}}(\cu_m ,p ,q) := \sup_{u \in \A^{\mathrm{inv}}\left(\cu_m \right) } \fint_{\cu_m} \left( -\frac 12 \a^{-1} \di u \wedge \di u - \a^{-1} \di u  \wedge p +   q \wedge  \di u  \right),
\end{equation*}
where $\A^{\mathrm{inv}}\left(\cu_m \right)  $ is the set of solutions under the environment $\a^{-1}$, i.e.,
\begin{equation} \label{def.Ainv}
 \A_{\mathrm{inv}}\left(\cu_m \right) := \left\{ u \in H^1_\di \Lambda^{(d-r-1)} \left(\cu_m \right) \, :\, \forall v \in C^\infty_c\Lambda^{r-1} \left(\cu_m \right), \int_{\cu_m} du \wedge \a^{-1} dv = 0 \right\}.
\end{equation}
This quantity satisfies the conclusions of Proposition~\ref{basicpropJ} and Theorem~\ref{maintheorem}. In particular, there exist a constant $C(d,\lambda) < \infty$, an exponent $\alpha(d,\lambda) > 0$ and a linear operator
$
\ahominv \in \mathcal{L} \left( \Lambda^{(d-r)}(\Rd), \Lambda^{r}(\Rd) \right)
$
such that, for each $m \in \N$,
\begin{equation*}
\sup_{p \in B_1\Lambda^{(d-r)}(\Rd)} \E \left[ J_{\mathrm{inv}}(\cu_m ,p ,\ahominv \, p) \right] \leq C 3^{-m \alpha}.
\end{equation*}
We can now present the proof of Theorem~\ref{dualityprop}.

\begin{proof}[Proof of Theorem~\ref{dualityprop}]
First we need to prove the following result, for each integer $r \in \{0 , \ldots, d\}$ and each integer $m \in \N,$
\begin{equation} \label{eq:identtity6.3}
\A_{\mathrm{inv}} \left(\cu_m \right) = \left\{ v \in H^1_\di \Lambda^{(d-r-1)}  \left(\cu_m \right) ~:~ \di v = \a \di u \mbox{ with } u \in \A  \left(\cu_m \right)  \right\}.
\end{equation}
We split the argument into 2 steps:
\begin{itemize}
\item We prove that each form $ v \in H^1_\di \Lambda^{(d-r-1)} \left(\cu_m \right)$ satisfying the equality $\di v = \a \di u$ for some solution $u \in \A \left(\cu_m \right)$ belongs to $\A_{\mathrm{inv}}\left(\cu_m \right)$. Indeed, for each $w \in C^\infty_c \Lambda^{r-1}\left(\cu_m \right)$, one has, by the symmetry assumption~\eqref{symmetryassumption} and the equality~\eqref{IPPdi},
\begin{equation} \label{ad09}
\int_{\cu_m} \di v \wedge \a^{-1} \di w = \int_{\cu_m} \di w \wedge \a^{-1} \di v =  \int_{\cu_m}  \di w \wedge \di u = 0.
\end{equation}
\item We prove that for each $v \in \A_{\mathrm{inv}} \left(\cu_m \right)$, there exists $u \in \A \left(\cu_m \right)$ such that $\di v = \a \di u$. Indeed, if $v \in \A_{\mathrm{inv}} \left(\cu_m \right)$, then $\a^{-1} \di v$ belongs to  $ L^2 \Lambda^{r} \left(\cu_m \right)$ and satisfies
\begin{equation*}
d\left(  \a^{-1} \di v \right) = 0 ~\mbox{in}~\cu_m.
\end{equation*}
Consequently $\a^{-1} \di v$ belongs to the space $H^1_\di \Lambda^{r} \left(\cu_m \right)$. We can apply Proposition~\ref{Poincarelemma}, to prove that there exists a form $u \in H^1_\di \Lambda^{r} \left(\cu_m \right)$ such that
\begin{equation*}
\a^{-1} \di v = \di u  ~\mbox{in}~\cu_m.
\end{equation*}
There only remains to prove that $u \in \A \left(\cu_m \right)$, it is a consequence of the following computation: for each form $w \in C^\infty_c \Lambda^{(d-r-1)} \left(\cu_m \right)$, one has, by the symmetry assumption~\eqref{symmetryassumption} and the identity~\eqref{IPPdi},
\begin{equation*}
\int_{\cu_m} \di u \wedge \a\di w = \int_{\cu_m} \di w \wedge \a \di u =  \int_{\cu_m} \di w \wedge \di v = 0.
\end{equation*}
This completes the proof of the identity~\eqref{eq:identtity6.3}.
\end{itemize}
From the identity~\eqref{eq:identtity6.3}, we deduce the equality, for each pair $(p,q) \in \Lambda^r \left( \Rd\right) \times  \Lambda^{d-r} \left( \Rd\right)$,
\begin{equation} \label{eq:eq1742}
J_{\mathrm{inv}}(\cu_m ,p ,q) =  J\left(\cu_m ,q,p\right).
\end{equation}
The identity~\eqref{eq:eq1742} is a consequence of the following computation
\begin{align*}
J_{\mathrm{inv}}(\cu_m ,p ,q) & = \sup_{u \in \A^{\mathrm{inv}}\left(\cu_m \right) } \fint_{\cu_m} \left( -\frac 12  \a^{-1}  \di u \wedge \di u -   \a^{-1} \di u \wedge p  +   q \wedge  \di u  \right) \\
& = \sup_{v \in \A\left(\cu_m \right)} \fint_{\cu_m} \left( -\frac 12   (\a \di v) \wedge (\a \di v) -  \a^{-1}  (\a \di v) \wedge p  +   q \wedge  (\a \di v)  \right)  \\
& =  \sup_{v \in \A\left(\cu_m \right)} \fint_{\cu_m} \left( -\frac 12 \di v \wedge \a \di v -   \di v  \wedge p+   q \wedge \a \di v \right)  \\
& = J\left( \cu_m,-q,-p\right) \\
& =  J\left(\cu_m ,q,p\right).
\end{align*}
Thus, by Theorem~\ref{maintheorem}, for each integer $m \in \N$,
\begin{equation*}
\sup_{p \in B_1\Lambda^{r}(\Rd)} \E \left[ J_{\mathrm{inv}}(\cu_m ,\ahom p ,p) \right] = \sup_{p \in B_1\Lambda^{r}(\Rd)} \E \left[ J(\cu_m , p ,\ahom p) \right] \leq  C 3^{- \alpha m}.
\end{equation*}
The previous inequality can be rewritten
\begin{equation*}
\sup_{q \in B_1\Lambda^{(d-r)}(\Rd)} \E \left[ J_{\mathrm{inv}}(\cu_m ,q ,\ahom^{-1} q) \right] \leq  C 3^{- \alpha m}.
\end{equation*}
Since the homogenized matrix is unique, we have $\ahom^{-1} = \ahominv$. This gives the expected result.
\end{proof}

\appendix

\section{Regularity estimates for differential forms} \label{appA}
In this appendix, we record some properties about the regularity of the solutions of the constant coefficient equation $\di \ahom \di u$. The two main results are the pointwise interior estimate, Proposition~\ref{propellipticregularity} and the $H^2$ boundary estimate, Proposition~\ref{globalest}. Both results are used in the proof of Theorem~\ref{homogenizationtheorem}. Most of these proofs are an adaptation of the classical proofs of the regularity theory of uniformly elliptic equations (cf.~\cite{gilbarg2015elliptic}). 

We first state two propositions, Proposition~\ref{GFint.prop}, an interior Gaffney-Friedrich inequality, and Proposition~\ref{H2interiorregularityestimate}, an interior $H^2$ regularity estimate. We then use these two ingredients to prove the pointwise interior estimate, Proposition~\ref{propellipticregularity}. We finally prove a global $H^2$ regularity result for the solutions of $\di \ahom \di u = 0$, Proposition~\ref{globalest}.

The following proposition is an interior version of the Gaffney-Friedrich inequalities stated Proposition~\ref{GFCDsmooth}.
\begin{proposition}[Interior Gaffney-Friedrich inequality] \label{GFint.prop}
There exists a constant $C := C(d) < \infty$ such that, for every $0 \leq r \leq d$, every open bounded subsets $V , U \subseteq \Rd$ satisfying $\bar{V} \subseteq U$, and every form $u \in L^2 \Lambda^r \left(U \right)$ such that $\di u \in L^2 \Lambda^{r+1} \left(U \right)$ and $\delta u \in L^2 \Lambda^{r-1} \left(U \right)$, one has $u \in H^1 \Lambda^r  \left(V \right)$ with the estimate
\begin{equation} \label{GFint.eq}
\left\| \nabla u  \right\|_{L^2 \Lambda^r  \left(V \right)} \leq C \left( \left\| \di u  \right\|_{L^2 \Lambda^r  \left(U \right)} + \left\| \delta u  \right\|_{L^2 \Lambda^r  \left(U \right)} + \frac{1}{\dist (V, \partial U) } \left\| u  \right\|_{L^2 \Lambda^r  \left(U \right)} \right).
\end{equation}
\end{proposition}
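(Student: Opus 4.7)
The strategy I would use is a cutoff-and-mollify argument anchored on the flat-space Bochner (or ``Gaffney'') identity: for any $v \in C^\infty_c \Lambda^r(\Rd)$,
\begin{equation} \label{e.plansketch}
\| \nabla v \|_{L^2 \Lambda^r (\Rd)}^2 = \| \di v \|_{L^2 \Lambda^{r+1}(\Rd)}^2 + \| \delta v \|_{L^2 \Lambda^{r-1}(\Rd)}^2.
\end{equation}
This is the first identity I would establish. Writing $v = \sum_{|I|=r} v_I \di x_I$ and expanding both sides, its content is that in flat space the Hodge Laplacian $\di \delta + \delta \di$ acts as the componentwise (negative) scalar Laplacian, so \eqref{e.plansketch} reduces, after one integration by parts, to $\sum_I \int_{\Rd} |\nabla v_I|^2 = -\sum_I \int_{\Rd} v_I \Delta v_I$, with all the differential-form combinatorics serving precisely to cancel the cross derivatives $\partial_j v_I \partial_k v_J$ arising for distinct pairs.

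Next I would introduce a cutoff $\eta \in C^\infty_c(U)$ with $\eta \equiv 1$ on $V$, $0 \leq \eta \leq 1$, and $\| \nabla \eta \|_{L^\infty} \leq C/\dist(V, \partial U)$, and apply \eqref{e.plansketch} to the compactly supported form $\eta u$ (extended by zero to $\Rd$). The Leibniz rule \eqref{propd} gives $\di (\eta u) = \eta \di u + \di \eta \wedge u$, and the identity $\delta = (-1)^{(r-1)d+1} \star \di \star$ from \eqref{explicitformuladelta}, combined with the same Leibniz rule applied to $\star u$, yields $\delta(\eta u) = \eta \delta u + A(\nabla \eta) u$, where $A(\nabla \eta)$ is an algebraic operator linear in $\nabla \eta$, hence bounded pointwise by $C |\nabla \eta|$. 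Taking squared $L^2$-norms, using the triangle inequality, and observing that $\nabla(\eta u) = \nabla u$ on $V$ (where $\nabla \eta = 0$) produces \eqref{GFint.eq} for smooth $u$.

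To remove the smoothness hypothesis I would mollify: set $u_\varepsilon := u \ast \rho_\varepsilon$ componentwise, which is smooth on $U_\varepsilon := \{ x \in U \, : \, \dist(x, \partial U) > \varepsilon \}$ and for which $\di u_\varepsilon = (\di u) \ast \rho_\varepsilon$ and $\delta u_\varepsilon = (\delta u) \ast \rho_\varepsilon$, since convolution commutes with constant-coefficient differential operators. For $\varepsilon$ much smaller than $\dist(V, \partial U)$, the previous step applied on $U_\varepsilon$ yields a bound on $\| \nabla u_\varepsilon \|_{L^2 \Lambda^r (V)}$ uniform in $\varepsilon$; then $\nabla u_\varepsilon$ converges weakly along a subsequence in $L^2(V)$, and since $u_\varepsilon \to u$ in $L^2(U)$, the weak limit must be $\nabla u$, giving $u \in H^1 \Lambda^r(V)$ together with \eqref{GFint.eq}.

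The main obstacle is Step~1: the sign- and Hodge-combinatorics bookkeeping needed to verify \eqref{e.plansketch} by direct computation. Once that identity is in hand, the cutoff and mollification steps are entirely parallel to the scalar interior Caccioppoli-type arguments and present no further surprises.
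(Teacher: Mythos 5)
Your proposal is correct and takes essentially the same approach as the paper: observe that on $\Rd$ the Hodge Laplacian acts componentwise as the scalar Laplacian, deduce the compactly supported identity $\|\nabla v\|_{L^2}^2 = \|\di v\|_{L^2}^2 + \|\delta v\|_{L^2}^2$ via integration by parts, and apply it to the cutoff $\eta u$ with the Leibniz rule controlling the commutator terms by $|\nabla\eta|$. The paper's write-up skips the mollification step you spell out, which is a legitimate (and actually necessary) point of rigor since the pointwise identity $(\di\delta + \delta\di)(\eta u) = \sum_I \Delta(\eta u_I)\,\di x_I$ requires some smoothness to justify directly; your version is slightly more careful there but not a different proof.
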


\begin{proof}
The proof relies on the following observation: given a form $u = \sum_{|I| = r} u_I \di x_I \in C^\infty \Lambda^r (U) $, one has the equality
$
(\di \delta + \delta \di ) u = \sum_{|I| =r}  \Delta u_I  \di x_I.
$
We select a cutoff  function $\eta \in C_c^\infty (U)$ such that
\begin{equation} \label{prop.eta.appA}
\indc_V \leq \eta \leq 1, ~~ |\nabla \eta| \leq \frac{C}{\dist(V, \partial U)},
\end{equation} 
and compute
\begin{equation*}
\left\| \nabla u  \right\|_{L^2 \Lambda^r  \left(V \right)}^2 \leq  \sum_I \int_U \left| \nabla \left( u_I \eta \right)  \right|^2(x) \, dx = \left\langle \di (u \eta) , \di ( u \eta) \right\rangle_U + \left\langle \delta (u \eta) , \delta ( u \eta) \right\rangle_U.
\end{equation*}
By the formulas~\eqref{propd} and the properties on the cutoff function $\eta$ stated in~\eqref{prop.eta.appA}, one has
\begin{align*}
 \left\langle \di (u \eta) , \di ( u \eta) \right\rangle_U + \left\langle \delta (u \eta) , \delta ( u \eta) \right\rangle_U
									& \leq C \left( \left\| \di u  \right\|_{L^2 \Lambda^{r+1}  \left(U \right)}^2 +  \left\| \delta u  \right\|_{L^2 \Lambda^{r+1}  \left(U \right)}^2 + \frac{1}{\dist (V, \partial U)^2 } \left\| u  \right\|_{L^2 \Lambda^r  \left(U \right)}^2 \right).
\end{align*}
Combining the three previous displays completes the proof of~\eqref{GFint.eq}.
\end{proof}

We then use the previous interior Gaffney-Friedrich inequality to prove the following interior $H^2$ estimate. The proof of the following proposition is an adaptation of the standard interior $H^2$ estimate for the solutions of uniformly elliptic equations, cf.~\cite[Theorem 8.8]{gilbarg2015elliptic}.

\begin{proposition}[Interior $H^2$ regularity estimate] \label{H2interiorregularityestimate}
There exists a constant $C := C( d , \lambda) < \infty$ such that for every open bounded subsets $U,V \subseteq \Rd$ such that $\bar{V} \subseteq U$, every $1 \leq r \leq d$ and every form $u \in H^1_\di \Lambda^{(r-1)} (U)$ solution of the equation
\begin{equation} \label{eqellH2int}
\di \left( \ahom \di u \right) = 0 \mbox{ in } U,
\end{equation}
the form $\di u$ belongs to the space $H^1 \Lambda^r (V)$ and the following estimate holds
\begin{equation*}
\| \nabla \di u \|_{ L^2 \Lambda^{r} (V)} \leq C \left(  \frac{1}{\dist (V, \partial U )} \| \di u \|_{ L^2 \Lambda^{r} (U)} +  \frac{1}{\dist (V, \partial U )^2} \| u \|_{ L^2 \Lambda^{(r-1)} (U)} \right).
\end{equation*}
\end{proposition}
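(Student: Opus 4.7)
The plan adapts the classical interior $H^2$ argument for constant-coefficient elliptic equations: combine the Caccioppoli inequality applied to a difference quotient of the solution with an estimate on the first derivative coming from Gaffney-Friedrich. The twist forced by the degeneracy of the operator $\di \ahom \di$ is a preliminary \emph{gauge-fixing} step via the Hodge-Morrey decomposition, since the hypothesis gives no a priori control on $\delta u$.

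First, mollify: set $u_\epsilon := u \ast \rho_\epsilon$, which is smooth on $U_\epsilon := \{x \in U : \dist(x, \partial U) > \epsilon\}$ and still solves $\di \ahom \di u_\epsilon = 0$ there, since $\ahom$ is constant. Fix a chain $V \subset V'' \subset V' \subset B \subset U_\epsilon$ with $B$ a ball and each gap comparable to $\dist(V, \partial U)$. Apply Proposition~\ref{hodgedecomp} on $B$ to decompose $u_\epsilon|_B = u_E + \tilde u_\epsilon + u_H$ with $u_E \in \mathcal{E}^{r-1}(B)$, $\tilde u_\epsilon \in \mathcal{C}^{r-1}(B)$, $u_H \in \mathcal{H}^{r-1}(B)$, mutually $L^2$-orthogonal. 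Since $\di u_E = \di u_H = 0$, we have $\di \tilde u_\epsilon = \di u_\epsilon$; since $\tilde u_\epsilon$ is co-exact, $\delta \tilde u_\epsilon = 0$; orthogonality gives $\|\tilde u_\epsilon\|_{L^2(B)} \leq \|u_\epsilon\|_{L^2(B)}$; and $\tilde u_\epsilon$ still solves $\di \ahom \di \tilde u_\epsilon = 0$ on $B$.

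With this gauge-fixed representative, the interior Gaffney-Friedrich inequality (Proposition~\ref{GFint.prop}) applied to $\tilde u_\epsilon$ yields, thanks to $\delta \tilde u_\epsilon = 0$,
\[ \|\nabla \tilde u_\epsilon\|_{L^2 \Lambda^{r-1}(V')} \leq C \left( \|\di u_\epsilon\|_{L^2(B)} + \frac{\|u_\epsilon\|_{L^2(B)}}{\dist(V, \partial U)} \right). \]
For $k \in \{1, \ldots, d\}$ and $|h|$ smaller than the gap $\dist(V'', \partial V')$, the difference quotient $D_k^h \tilde u_\epsilon := (\tilde u_\epsilon(\cdot + h e_k) - \tilde u_\epsilon)/h$ lies in $H^1_\di \Lambda^{r-1}(V'')$ (here $\di D_k^h \tilde u_\epsilon = D_k^h \di u_\epsilon$ is in $L^2$ because $\di u_\epsilon$ is smooth) and solves the same PDE by translation invariance. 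The Caccioppoli inequality (Proposition~\ref{caccioppoli}) together with the standard difference-quotient bound $\|D_k^h \tilde u_\epsilon\|_{L^2(V'')} \leq \|\partial_k \tilde u_\epsilon\|_{L^2(V')}$ then gives
\[ \|D_k^h \di u_\epsilon\|_{L^2(V)} = \|\di D_k^h \tilde u_\epsilon\|_{L^2(V)} \leq \frac{C}{\dist(V, \partial V'')} \|D_k^h \tilde u_\epsilon\|_{L^2(V'')} \leq \frac{C}{\dist(V, \partial U)} \|\nabla \tilde u_\epsilon\|_{L^2(V')}. \]
Sending $h \to 0$, summing over $k$, and combining with the previous display produces the stated bound for $u_\epsilon$ uniformly in $\epsilon$; weak $H^1(V)$ compactness then transfers the estimate to $u$.

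The main obstacle is exactly the degeneracy of the operator: without a priori $L^2$ control on $\delta u$, Gaffney-Friedrich cannot be applied directly to $u$ and the usual iteration collapses. The Hodge-Morrey gauge fix is the specific form-theoretic maneuver that sidesteps this, producing a co-closed representative with the same exterior derivative and satisfying the same PDE — a trick with no analogue in the scalar case.
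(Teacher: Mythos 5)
Your proof hinges on the same two ideas as the paper's argument: a gauge-fix that forces $\delta u = 0$ so that the interior Gaffney--Friedrich inequality (Proposition~\ref{GFint.prop}) can convert control of $\di u$ and $u$ into control of $\nabla u$, followed by a Caccioppoli-type estimate for translation difference quotients of $u$. Where you diverge is in the gauge-fix itself. The paper simply replaces $u$ by $u$ minus its $L^2$-projection onto $C^{r-1}_{\di}(U)$, which leaves $\di u$ and the equation unchanged, decreases the $L^2$ norm, and gives $\delta u = 0$ on \emph{all} of $U$ because $\di C^\infty_c\Lambda^{r-2}(U) \subset C^{r-1}_\di(U)$; no intermediate domain is needed. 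You instead pull out the co-exact part of $u$ via the Hodge--Morrey decomposition on an intermediate ball $B$. That achieves the same effect locally, but introduces an assumption your proof does not justify: that there is a single ball $B$ with $V \subset B \subset U$ whose geometry is comparable to $\dist(V,\partial U)$. For generic $V$ and $U$ (say, nested annuli) no such ball exists. This is a genuine gap in the argument as written, though a routine one: you can either cover $V$ by finitely overlapping balls of radius $\sim \dist(V,\partial U)$ contained in $U$, run your argument on each, and sum; or replace $B$ by any smooth domain sandwiched between $V$ and $U$ (Hodge--Morrey and Proposition~\ref{GFint.prop} both apply, with the latter giving a dimensional constant). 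Two further, cosmetic differences: the mollification step is superfluous, since difference quotients of an $H^1_\di$ form are already well-defined in $L^2$ and commute with $\di$; and invoking Proposition~\ref{caccioppoli} for $D_k^h \tilde u$ is exactly the computation the paper carries out by hand by testing the equation with $D_k^{-h}(\eta^2 D_k^h u)$, so there is no real difference there, just a cleaner packaging.
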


\begin{proof}
The main idea of this proof is to follow the proof of~\cite[Theorem 8.8]{gilbarg2015elliptic} for harmonic functions and combine it with the interior Gaffney-Friedrich inequality. 

First note that without loss of generality, one can assume that the form $u$ belongs to the space $C^{r-1}_{\di}(U)^\perp$. We select two open sets $W, W_1$ such that $V \subseteq W \subseteq \bar{W} \subseteq W_1 \subseteq \bar{W_1} \subseteq U$ and such that
\begin{equation} \label{VWW1U}
\dist(V, \partial W)  = \dist(W, \partial W_1) = \dist (W_1 , \partial U) = \frac{\dist(V, \partial W)}{3}.
\end{equation}
Additionally, we select a cutoff function $\eta \in C_c^\infty (U)$ such that
\begin{equation} \label{defetaUVW}
\indc_V \leq \eta \leq \indc_W, ~\| \nabla \eta \| \leq \frac{C}{\dist(V, \partial U)}.
\end{equation}
Let $h > 0$ be small, choose an integer $k \in \{ 1 , \ldots, d \}$ and denote by
$
v := D_k^{-h} \left( \eta^2 D_k^{h} u \right),
$
where $D^h_k$ is the difference quotient, defined by the formula
$
D^h_k u(x) = \frac{u(x+ he_k) - u(x)}{h}.
$
If the parameter $h$ is small enough then the form $v$ belongs to the space $H^1_{\di , 0} \Lambda^{r-1}(U)$ and can be used as a test function in~\eqref{eqellH2int}. We obtain the equality
$
\left\langle \di u , \ahom \di v \right\rangle_{U} = 0.
$
Standard computations on the previous display lead to the inequality
\begin{equation*}
\left\langle D_k^{h} \di u , D_k^{h} \di u \right\rangle_{V} \leq \frac{C}{\dist (V , \partial U)^2}  \left\langle  D_k^{h} u , D_k^{h} u  \right\rangle_{W}.
\end{equation*}
Since we assume $u \in C^{r-1}_d(U)^\perp$, one has $\delta u = 0$ in $U$ and in particular $\delta u \in L^2 \Lambda^{r-2}(V)$. From this we deduce that the form $u$ satisfies the assumptions of Proposition~\ref{GFint.prop} and consequently it belongs to the space $H^1 \Lambda^{r-1}(W_1)$ and satisfies the estimate
\begin{equation*}
\| \nabla u \|_{ L^2 \Lambda^{r} (W_1)}  \leq C \left( \| \di u \|_{ L^2 \Lambda^{r} (U)} +  \frac{1}{\dist (V, \partial U )} \| u \|_{ L^2 \Lambda^{(r-1)} (U)} \right),
\end{equation*}
where we used the estimate~\eqref{defetaUVW}. Additionally, according to \cite[Lemma 7.23]{gilbarg2015elliptic}, one has the inequality
$
\left\|  D_k^{h} u \right\|_{L^2 (W)} \leq C \| \nabla u \|_{ L^2 \Lambda^{r} (W_1)}
$
for $h > 0$ small enough. Combining the three previous displays and sending $h$ to $0$ gives, according to~\cite[Lemma 7.24]{gilbarg2015elliptic},
\iffalse
\begin{equation*}
\left\langle D_k^{h} \di u , D_k^{h} \di u \right\rangle_{V} \leq \frac{C}{\dist (V , \partial U)^2} \left(  \| \di u \|_{ L^2 \Lambda^{r} (U)}^2 + \frac{1}{\dist (W_1, \partial U )^2}  \| u \|_{ L^2 \Lambda^{(r-1)} (U)}^2 \right).
\end{equation*}
Since this inequality is true for every $|h| > 0$ small enough, one has, according to~\cite[Lemma 7.24]{gilbarg2015elliptic}, $\di u \in H^1 \Lambda^{r-1}(V)$ and
\fi
\begin{equation*}
\| \nabla \di u \|_{ L^2 \Lambda^{r} (V)}^2 \leq \frac{C}{\dist (V , \partial U)^2} \left(  \| \di u \|_{ L^2 \Lambda^{r} (U)}^2 + \frac{1}{\dist (V, \partial U )^2}  \| u \|_{ L^2 \Lambda^{(r-1)} (U)}^2 \right).
\end{equation*}
The proof is complete.
\end{proof}

\begin{proposition}[Elliptic regularity] \label{propellipticregularity}
There exists a constant $C := C(d,k,\lambda) < \infty$ such that for every open bounded subset $U \subseteq \Rd$, every integer $r \in \{ 0, \ldots, d \}$,  every integer $k \in \N$, every $R > 0$, and every solution of the equation
\begin{equation*}
\di \left( \ahom \di u \right) = 0 \mbox{ in } U,
\end{equation*}
the following pointwise estimate holds
\begin{equation} \label{pointwiseellipticestimatedu}
\left\| \nabla^k \di u\right\|_{L^\infty \Lambda^r \left(U_R\right)} \leq \frac{C}{R^{k + d/2}} \left\| \di u \right\|_{L^2 \Lambda^r \left(U\right)},
\end{equation}
where we used the notation $U_R := \left\{ x \in U  \, : \, \dist(x, \partial U) > R \right\}$.
\end{proposition}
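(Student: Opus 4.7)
Since the estimate is pointwise in $x_0 \in U_R$, fix such an $x_0$ and set $B := B(x_0,R) \subset U$. It suffices to prove the pointwise bound $|\nabla^k \di u(x_0)| \leq C R^{-(k+d/2)} \|\di u\|_{L^2\Lambda^r(B)}$, then take the supremum. The proof proceeds in four steps.

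\textit{Step 1 (Normalisation on $B$).} The quantity $\di u$ is invariant under adding to $u$ a closed form, so I will replace $u|_B$ by a convenient representative. Using Proposition~\ref{formpoincareW} on the convex domain $B$, decompose $u|_B = \tilde u + c$ with $c \in C^{r-1}_\di(B)$ and $\tilde u \in C^{r-1}_\di(B)^\perp$. Then $\di \tilde u = \di u$, the form $\tilde u$ still solves $\di(\ahom \di \tilde u) = 0$ in $B$, and the scaling property of the Poincaré--Wirtinger constant gives
\begin{equation*}
\|\tilde u\|_{L^2\Lambda^{r-1}(B)} \leq CR \|\di u\|_{L^2\Lambda^r(B)}.
\end{equation*}

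\textit{Step 2 (Base $L^2$ gradient estimate).} Apply Proposition~\ref{H2interiorregularityestimate} to $\tilde u$ with inner domain $V = B(x_0, R/2)$ and outer domain $B$, so that $\dist(V,\partial B) = R/2$. Combined with Step~1, this gives
\begin{equation*}
\|\nabla \di u\|_{L^2\Lambda^r(B(x_0,R/2))} \leq \frac{C}{R}\|\di u\|_{L^2\Lambda^r(B)} + \frac{C}{R^{2}}\|\tilde u\|_{L^2\Lambda^{r-1}(B)} \leq \frac{C}{R}\|\di u\|_{L^2\Lambda^r(B)}.
\end{equation*}

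\textit{Step 3 (Bootstrap to $H^m$).} Because $\ahom$ is a constant linear operator, a standard difference-quotient argument (essentially the one used inside the proof of Proposition~\ref{H2interiorregularityestimate}) shows that each weak partial derivative $\partial_i \tilde u$ is again a weak solution of $\di(\ahom \di \cdot)=0$ on a slightly smaller subdomain, with $\partial_i \tilde u \in H^1_\di \Lambda^{r-1}$ there. Iterating Step~2 on a nested sequence of concentric balls
\begin{equation*}
B(x_0, R) \supset B(x_0, R(1 - \tfrac{1}{2m})) \supset \cdots \supset B(x_0, R/2),
\end{equation*}
each successive step losing a distance factor $R/(2m)$, yields by induction on $m$
\begin{equation*}
\|\nabla^m \di u\|_{L^2\Lambda^r(B(x_0,R/2))} \leq \frac{C(m,d,\lambda)}{R^m} \|\di u\|_{L^2\Lambda^r(B)}.
\end{equation*}

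\textit{Step 4 (Sobolev embedding).} Choose any integer $m > k + d/2$. The classical Sobolev embedding $H^{m-k}(B(x_0,R/2)) \hookrightarrow L^\infty(B(x_0,R/2))$, applied component-wise to $\nabla^k \di u$ with the scaling obtained from the dilation $x\mapsto R^{-1}(x-x_0)$, gives
\begin{equation*}
|\nabla^k \di u(x_0)| \leq C R^{-d/2} \sum_{j=0}^{m-k} R^{j} \|\nabla^{k+j}\di u\|_{L^2\Lambda^r(B(x_0,R/2))} \leq \frac{C}{R^{k+d/2}} \|\di u\|_{L^2\Lambda^r(B)}.
\end{equation*}

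\textit{Main difficulty.} The delicate point is Step~3, namely a rigorous justification that a partial derivative of a weak solution is itself a weak solution in the sense of \eqref{defsolution}, in the appropriate function space on a slightly smaller domain. Because $\ahom$ has constant coefficients and $\delta \tilde u = 0$ by construction of the projection, one can rerun the difference-quotient mechanism from the proof of Proposition~\ref{H2interiorregularityestimate} at every scale, using at each step the interior Gaffney--Friedrichs inequality (Proposition~\ref{GFint.prop}) to control tangential components. An alternative, cleaner viewpoint is that the system ``$\di w = 0$ and $\di(\ahom w)=0$'' satisfied by $w := \di u$ is an overdetermined constant-coefficient elliptic system and hence $C^\infty$-hypoelliptic; the quantitative dependence on $R$ and $k$ is then read off from Steps~1--2 by the same scaling argument.
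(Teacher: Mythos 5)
Your argument is correct and reaches the estimate with the right scaling, but the bootstrap in Step~3 takes a genuinely different route from the paper's. You differentiate the solution directly: after normalising $u$ to $\tilde u\in C^{r-1}_\di(B)^\perp$, you argue (via difference quotients, using $\ahom$ constant, $\delta\tilde u=0$, and the base $H^2$ estimate to put $\di\tilde u\in H^1_{\mathrm{loc}}$) that each $\partial_i\tilde u$ is again a weak solution in $H^1_\di$ on a smaller ball, then re-normalise and iterate. The paper instead keeps the unknown on the ``form'' side: at stage $l$ it observes that $\nabla^l\di u$ is closed, invokes Proposition~\ref{Poincarelemma} to produce a potential $v_l\in C^{r-1}_\di^\perp$ with $\di v_l=\nabla^l\di u$, checks directly that $\di(\ahom\di v_l)=0$ (trivially, since $\ahom$ is constant and commutes with $\nabla^l$), and then reapplies Proposition~\ref{H2interiorregularityestimate} together with Poincar\'e for $v_l$. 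This sidesteps entirely the question of whether derivatives of a weak $H^1_\di$ solution are again weak solutions: the ``new solution'' at each stage is manufactured by the Poincar\'e lemma rather than by differentiating the old one, and the normalisation $\|v_l\|_{L^2}\lesssim \mathrm{radius}\cdot\|\di v_l\|_{L^2}$ is supplied automatically by the construction of $v_l$ in $C^{r-1}_\di^\perp$. Your route works too, but the commutation step you flag as delicate is exactly what the paper's potential trick is designed to avoid; if you keep your approach you should spell out that the re-normalisation of $\partial_{i_1}\cdots\partial_{i_l}\tilde u$ at each stage (or a Gaffney--Friedrichs bound replacing it) is needed to control the lower-order $\|\cdot\|_{L^2}$ term in Proposition~\ref{H2interiorregularityestimate}, and that $\di u\in H^1_{\mathrm{loc}}$ is available before invoking the integration by parts in $x_i$.
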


\begin{proof}
We select an integer $k \in \N$, a non-negative real number $R > 0$, and a point $x \in U_R$. It is sufficient to prove~\eqref{pointwiseellipticestimatedu}, to show the estimate
\begin{equation} \label{pointwiseellipticestimatedu0}
 \left| \nabla^k \di u (x) \right| \leq  \frac{C}{R^{k + d/2}} \left\| \di u \right\|_{L^2 \Lambda^r \left(B_R(x)\right)},
\end{equation}
for some constant $C = C(d,k,\Lambda ) < \infty$. We split the proof into two steps.

\textit{Step 1.} We prove that there exists a constant $C = C(d,\lambda ) < \infty$, such that for every $l \in \N$, $\di u \in H^l \Lambda^r \left( B_{R/2^l}(x) \right)$ and
\begin{equation} \label{inductionstepl}
\left\| \nabla^l \di u\right\|_{L^2 \Lambda^r \left(B_{R/2^l}(x) \right)} \leq  \frac{C^l 2^{l^2/2} }{R^{l }} \left\| \di u \right\|_{L^2 \Lambda^r \left( B_R(x) \right)}.
\end{equation}
This inequality can be proved by induction on $l$. It is true for $l = 0$. We can use Proposition~\ref{H2interiorregularityestimate} to go from $l$ to $l+1$. Assume that the estimate~\eqref{inductionstepl} holds with the integer $l$. In that case, one has $\nabla^l \di u \in L^2 \Lambda^r \left( B_{R/2^l}(x) \right)$. It is straightforward to check the identity
$
\di \left( \nabla^l \di u \right) = 0.
$
Thus by Proposition~\ref{Poincarelemma}, there exists a form $v_l \in H^1_\di \Lambda^{r-1} \left( B_{R/2^l}(x) \right)$ such that 
$
v_l \in  C^{r-1}_\di \left( B_{R/2^l}(x ) \right)^\perp \mbox{ and } \di v_l = \nabla^l \di u.
$
It is moreover a straightforward to check that it satisfies the equation
$
\di \left( \ahom \di v_l \right) = 0.
$

Consequently, one can apply Proposition~\ref{H2interiorregularityestimate} to the form $v_l$ with the sets $U = B_{R/2^l}(x)$ and $V = B_{R/2^{l+1}}(x)$. This gives that the form $\nabla^{l+1} \di u$ belongs to the space $H^1 \Lambda^r \left( B_{R/2^{l+1}}(x) \right) $, and thus $\di u \in H^{l+1} \Lambda^r \left( B_{R/2^{l+1}}(x) \right)$ with the estimate
\begin{align*}
\left\| \nabla^{l+1} \di u \right\|_{L^2 \Lambda^r \left( B_{R/2^{l+1}}(x) \right)} \leq  \frac{C2^{l+1}}{R} \left(  \left\|  \nabla^{l} \di u \right\|_{ L^2 \Lambda^{r} \left( B_{R/2^l}(x)) \right)} + \frac{2^{l+1}}{R}  \| v_l \|_{ L^2 \Lambda^{(r-1)} \left(B_{R/2^l}(x) \right)} \right).
\end{align*}
By Proposition~\ref{PoincareineqH10conv}, the form $v_l$ satisfies the Poincar\'e inequality
\begin{equation*}
\| v_l \|_{ L^2 \Lambda^{(r-1)} (B_{R/2^l}(x))} \leq C \frac{2^l}{R} \| \nabla^l u \|_{ L^2 \Lambda^{(r-1)} \left(B_{R/2^l}(x)\right)}.
\end{equation*}
Combining the two previous displays yields
\begin{equation*}
\left\| \nabla^{l+1} \di u \right\|_{L^2 \Lambda^r \left( B_{R/2^{l+1}}(x) \right)} \leq  \frac{C2^{l+1}}{R} \|  \nabla^{l} \di u \|_{ L^2 \Lambda^{r} \left( B_{R/2^l}(x)\right)}.
\end{equation*}
Applying the induction hypothesis completes the proof.

\smallskip

\textit{Step 2.} From the first step, we get that for every integer $l \in \N$, the form $\nabla^{k+l} \di u$ belongs to the space $L^2 \Lambda^r \left( B_{R/2^{k+l}}(x) \right)$. In particular, by the Sobolev injection (see ~\cite[Chapter 4]{adams2003sobolev}), the form $\nabla^{k} \di u$ belongs to the space $L^\infty \Lambda^r \left( B_{R/2^{k+  d/2 + 1}}(x) \right),$ and we have the estimate
\begin{equation*}
\left\| \nabla^{k} \di u \right\|_{L^\infty \Lambda^r \left( B_{R/2^{k+ d/2 + 1}}(x) \right)} \leq \frac{C}{R^{k + d/2}} \left\| \di u \right\|_{L^2 \Lambda^r \left(B_R(x)\right)}.
\end{equation*}
This completes the proof of~\eqref{pointwiseellipticestimatedu0}.
\end{proof}

We then establish the following global $H^2$ estimate for the solutions of the equation $\di \ahom \di u = 0$.
\begin{proposition}[Global $H^2$ regularity] \label{globalest}
Let $U \subseteq \Rd$ be a smooth bounded domain of $\Rd$ and $r$ be an integer in the set $\{1, \ldots , d \}$. Let $f \in H^1_{\di} \Lambda^{r-1} \left( U \right)$ be such that $\di f  \in H^1 \Lambda^{r} \left( U \right)$. Let $u \in H^1_{d, 0} \Lambda^{r-1}(U)$ be a solution of the equation
\begin{equation} \label{globalestpb}
\left\{ \begin{aligned}
\di \left( \ahom \di u \right) = 0 \mbox{ in } U, \\
\mathbf{t} u = f \mbox{ on } \partial U,
\end{aligned}  \right.
\end{equation}
then the form $\di u$ belongs to the space $H^1 \Lambda^{r}(U)$ and one has the estimate
\begin{equation}\label{globalestres}
\left\|  \di u \right\|_{H^1 \Lambda^{r}(U)} \leq C  \left\| \di f \right\|_{H^1 \Lambda^r (U)} .
\end{equation}
\end{proposition}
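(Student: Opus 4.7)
The strategy follows the classical approach to boundary $H^2$-regularity for elliptic equations, adapted to the form-valued setting: a partition-of-unity reduction to interior and boundary pieces, then tangential difference quotients after boundary flattening, and finally an algebraic recovery of the normal derivative using the system structure.

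First I would localize. Testing the equation against $u - f \in H^1_{\di,0}\Lambda^{r-1}(U)$ and invoking \eqref{ellipticityassumption} yields $\|\di u\|_{L^2\Lambda^r(U)} \le C\|\di f\|_{L^2\Lambda^r(U)}$, so a base $L^2$ bound is in hand. Choosing a finite cover $\{B_i\}$ of $\bar U$ with subordinate partition of unity, balls entirely inside $U$ are handled by Proposition~\ref{H2interiorregularityestimate}, which delivers the desired $H^1$ control on $\di u$ locally. For each boundary ball, smoothness of $\partial U$ provides a diffeomorphism $\Phi: B^+ \to B_i \cap \bar U$ with $\Phi(\{x_d=0\}\cap B) = B_i \cap \partial U$. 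Setting $v := \Phi^*(u - f)$ and using \eqref{pullbackdist}--\eqref{changevar}, the symmetry and ellipticity assumptions transfer to a new smooth, symmetric, uniformly elliptic coefficient field $\tilde\ahom(x)$ on $B^+$, and $v \in H^1_{\di,0}\Lambda^{r-1}$ of the flat face satisfies, in the weak sense,
\[
\int_{B^+} \di v \wedge \tilde\ahom(x)\, \di w \;=\; \int_{B^+} F \wedge \di w,\qquad \forall w \in H^1_{\di,0}\Lambda^{r-1}(B^+),
\]
with $\|F\|_{L^2\Lambda^{d-r}(B^+)} \le C\|\di f\|_{H^1\Lambda^r(U)}$.

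Next I would run tangential difference quotients. For each $k \in \{1,\dots,d-1\}$ and $|h|$ small, testing with $D_k^{-h}(\eta^2 D_k^h v)$ (for a cutoff $\eta$ supported in $B$ and equal to $1$ on a smaller half-ball) is admissible because $D_k^h$ preserves vanishing tangential trace on $\{x_d = 0\}$. Mimicking the argument in the proof of Proposition~\ref{H2interiorregularityestimate}, with the new feature that variable $\tilde\ahom$ produces commutator terms bounded by $\|\nabla\tilde\ahom\|_{L^\infty} \|\di v\|_{L^2}$ which are absorbed using the base estimate, yields $\partial_k \di v \in L^2$ of a smaller half-ball for $k = 1,\dots,d-1$, with norm controlled by $\|\di f\|_{H^1\Lambda^r(U)}$.

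The main obstacle, and the genuinely form-specific step, is recovering the normal derivative $\partial_d \di v$. Write $\omega := \di v = \sum_{|I|=r}\omega_I\,dx_I$. Two algebraic relations are available pointwise: first, $\di\omega = 0$ (automatic from $\di\circ\di = 0$); second, $\di(\tilde\ahom\,\omega) = F$ in $L^2$. Reading off the coefficient of $dx_J$ in $\di\omega = 0$ for each $(r+1)$-multi-index $J$ containing $d$ expresses $\partial_d \omega_I$ for every $I$ with $d \notin I$ as a linear combination of $\partial_k \omega_{I'}$ with $k < d$, which are already in $L^2$ by the previous step. Reading off the analogous coefficient of $dx_K$ in $\di(\tilde\ahom\omega) = F$ for $(r+1)$-multi-indices $K$ avoiding $d$, and separating the lower-order term $\partial_d\tilde\ahom\cdot\omega$ (which is in $L^2$) from the principal part, gives a linear system whose unknowns are the remaining $\partial_d \omega_I$ for $I \ni d$. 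Invertibility of this system is equivalent to invertibility of the operator $p' \mapsto (\text{projection onto }\Lambda^{d-r}(\R^{d-1}))\,\tilde\ahom(dx_d \wedge p')$; applying the ellipticity bound \eqref{ellipticityassumption} to $p = dx_d \wedge p'$ shows that this restricted operator is itself elliptic in dimension $d-1$, hence an isomorphism. Solving the linear system gives $\partial_d\omega \in L^2$ of a smaller half-ball with the right estimate. Pushing back via the $H^1$-continuity of $(\Phi^{-1})^*$ and summing the partition-of-unity contributions produces \eqref{globalestres}. The hardest step is this final invertibility: it is precisely the form-valued analogue of ``solving for the normal second derivative'' in scalar regularity theory, and it is the point at which the degenerate-elliptic character of $\di\ahom\di$ is offset by the wedge-pairing ellipticity \eqref{ellipticityassumption}.
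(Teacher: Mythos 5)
Your strategy is genuinely different from the paper's, and the core algebraic idea is correct — but there is one gap that needs to be filled for the tangential difference-quotient step to run.

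\smallskip

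\textbf{Where you differ from the paper.} The paper does not treat $\di\ahom\di$ directly: it augments it to the strongly elliptic operator $\di\ahom\di+(-1)^r\star\di\delta$, solves the augmented Dirichlet--Neumann problem (Steps~1--2), shows a posteriori that the extra term vanishes, and then recovers the normal second derivative of $u$ from the strong ellipticity of the augmented operator via the structure theory of McLean. You bypass the augmentation altogether: you work with $\omega := \di v$ and exploit the first-order overdetermined system
\[
\di\omega=0,\qquad \di(\tilde\ahom\omega)=F,
\]
reading the normal derivatives off coefficient-by-coefficient. Your invertibility claim for the reduced linear system is correct, and the proof is exactly the one you sketch: if $\Pi$ denotes the projection onto the span of $\{\di x_{K'}:d\notin K'\}$, and $\Pi\,\tilde\ahom(\di x_d\wedge p')=0$ for some $p'\in\Lambda^{r-1}(\R^{d-1})$, then
\[
(\di x_d\wedge p')\wedge\tilde\ahom(\di x_d\wedge p') \;=\; \di x_d\wedge p'\wedge\bigl(\Pi\,\tilde\ahom(\di x_d\wedge p')\bigr) \;=\; 0,
\]
because the part of $\tilde\ahom(\di x_d\wedge p')$ containing $\di x_d$ wedges to zero; this contradicts \eqref{ellipticityassumption}. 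Since $\dim\Lambda^{r-1}(\R^{d-1})=\dim\Lambda^{d-r}(\R^{d-1})=\binom{d-1}{r-1}$, injectivity gives invertibility. This is a clean replacement for McLean's Lemma 4.17 and is also better adapted to the target statement, since it aims directly for $\di u\in H^1$ instead of routing through $u\in H^2$. (There is a bookkeeping slip: the relevant multi-indices for $\di(\tilde\ahom\omega)=F$ are the $(d-r+1)$-multi-indices $K$ \emph{containing} $d$, not the $(r+1)$-multi-indices avoiding it; the argument is unaffected.)

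\smallskip

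\textbf{The gap.} The tangential difference-quotient step is stated too quickly. Testing with $w=D_k^{-h}(\eta^2 D_k^h v)$ and performing the discrete integration by parts produces, among others, the term
\[
\int_{B^+}\tilde\ahom(\cdot+h\e_k)\, D_k^h\di v \wedge \bigl(2\eta\,\di\eta\wedge D_k^h v\bigr),
\]
which you must absorb. That requires $\|D_k^h v\|_{L^2}$ to be bounded uniformly in $h$ near the support of $\eta$, i.e.\ $v\in H^1$ (not merely $v\in H^1_{\di,0}$, which is all you assert). The scalar case hides this because $H^1_\di=H^1$ for $0$-forms; here it is a genuine issue, and it is precisely the point of the $\delta\star\delta$ augmentation in the paper's proof. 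The fix is the same ingredient as in the paper's interior estimate, Proposition~\ref{H2interiorregularityestimate}: choose the representative of the solution so that $u-f\in H^1_{\di,0}\Lambda^{r-1}(U)\cap\bigl(C^{r-1}_{\di,0}(U)\bigr)^\perp$, deduce $\delta(u-f)=0$ (by testing against $\di\gamma$, $\gamma\in C^\infty_c\Lambda^{r-2}$) and $\mathbf{t}(u-f)=0$, and note $(C^{r-1}_{\di,0})^\perp\subseteq(\mathcal{H}^{r-1}_D)^\perp$; then the Gaffney--Friedrichs inequality (Proposition~\ref{GFCDsmooth}) gives $u-f\in H^1\Lambda^{r-1}(U)$ with a bound by $\|\di u\|_{L^2}+\|\di f\|_{L^2}$, and smoothness of $\Phi$ transfers this to $v$. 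Once this is in place, the rest of your outline goes through and gives a proof that is arguably more elementary than the paper's.
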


\begin{proof}
We first note that two solutions of the equation~\eqref{globalestpb} differ by a form in the space $C^{r-1}_{\di , 0}$. This implies that two solutions of the equation~\eqref{globalestpb} have the same exterior derivative. Thus to prove the estimate~\eqref{globalestres}, it is enough to prove it for any solution of the equation~\eqref{globalest}.

The strategy of the proof is the following. One wants to apply a result from the regularity theory of strongly elliptic operators to the differential form $u$, see~\eqref{defstrongell} for a definition and~\cite{mclean2000strongly} for a reference on the topic of strongly elliptic differential operators. Unfortunately the operator $\di \ahom \di$ is not strongly elliptic. The strategy is then to solve the problem $\di \ahom \di + (-1)^r \star \di \delta u = 0$ with appropriate boundary conditions so that one has the equality $\star d \delta u=0 $ and the form $u$ is in fact a solution of the equation~\eqref{globalestpb}. Contrary to the operator $\di \ahom \di$, the operator $\di \ahom \di + (-1)^r \star \di \delta$ is strongly elliptic and a regularity theory exists for these operators. Thanks to this argument, one is able to derive $H^2$ boundary regularity estimate for the form $u$. This implies the regularity estimate~\eqref{globalestres}.

The main ideas of the proof can be found in~\cite[Chapter 2]{Sch95} and~\cite[Chapter 4]{mclean2000strongly}. We recall the notation for the set of harmonic forms with Dirichlet boundary condition,
\begin{equation*}
\mathcal{H}^{r-1}_D(U) := \mathcal{H}^{r-1}(U) \cap H^1_{\di,0}(U) :=  \left\{ u \in L^2\Lambda^{r-1}(U) \, : \, \di u = 0, ~ \delta u = 0 ~\mbox{in}~ U ~ \mbox{and} ~ \mathbf{t} u = 0 ~ \mbox{on } \partial U \right\}.
\end{equation*}
We split the proof into 8 steps:
\begin{itemize}
\item In Step 1, we show that there exists a unique solution denoted by $u$ in the space $\mathcal{H}^{r-1}_D(U)^\perp$ to the elliptic system
\begin{equation} \label{strongellpb}
\left\{ \begin{aligned}
\di \ahom \di u +  (-1)^r  \star \di \delta u = \di \ahom \di f &~\mbox{in} ~ U, \\
\mathbf{t} u = 0 &~\mbox{on} ~ \partial U, \\
 \mathbf{t} \delta  u = 0 &~ \mbox{on} ~ \partial U.
\end{aligned}  \right.
\end{equation}
\item In Step 2, we show that the form $u$ defined in Step 1 satisfies $\di \delta u = 0$ and is a solution of~\eqref{globalestpb}.
\item Steps 3 to 6 are the technical steps, we show, using the regularity theory for strongly elliptic operators developed in~\cite{mclean2000strongly}, the $H^2$ boundary regularity for the solution of the system 
\begin{equation*}
\left\{ \begin{aligned}
\di \ahom \di u +  (-1)^r  \star \di \delta u = \di \ahom \di f &~\mbox{in} ~ U, \\
\mathbf{t} u = 0 &~\mbox{on} ~ \partial U, \\
 \mathbf{t} \delta  u = 0 &~ \mbox{on} ~ \partial U.
\end{aligned}  \right.
\end{equation*}
\item In Steps 7 and 8, we combine the results of the previous steps to prove the regularity estimate~\eqref{globalestres}.
\end{itemize}

\textit{Step 1.} 
First, we prove that there exists a unique solution $u \in \mathcal{H}^{r-1}_D(U)^\perp$ of the system
\begin{equation*}
\left\{ \begin{aligned}
\di \ahom \di u +  (-1)^r  \star \di \delta u = \di \ahom \di f &~\mbox{in} ~ U, \\
\mathbf{t} u = 0 &~\mbox{on} ~ \partial U, \\
 \mathbf{t} \delta  u = 0 &~ \mbox{on} ~ \partial U.
\end{aligned}  \right.
\end{equation*}
This equation can be rewritten variationally the following way, there exists a form $u$ which belongs to the space $H^1 \Lambda^{r-1} (U) \cap  \mathcal{H}^{r-1}_D(U)^\perp$ such that $\mathbf{t}u = 0 $ on $\partial U$ and for each $v \in H^1 \Lambda^{r-1} (U)$ satisfying $\mathbf{t} v = 0$ on $\partial U$,
\begin{equation} \label{varregb}
 \int_{U} \di u \wedge \ahom \di v + \int_{U} \delta u \wedge  \star \delta v = \int_{U} \di f \wedge \ahom \di v.
\end{equation}
To solve this equation, we consider the associated energy: for $v \in H^1 \Lambda^{r-1} (U) \cap \mathcal{H}^{r-1}_D(U)^\perp$ satisfying the boundary condition $\mathbf{t} v = 0$, we define
\begin{equation*}
\mathcal{J}(v) :=  \int_{U} \di v \wedge \ahom \di v + \int_{U} \delta v \wedge  \star \delta v - \int_{U} \di f \wedge \ahom \di v.
\end{equation*}
Since the homogenized environment $\ahom $ satisfies the ellipticity assumption~\eqref{ellipticityassumption}, one has
\begin{equation*}
\int_{U} \di v \wedge \ahom \di v + \int_{U} \delta v \wedge  \star \delta v \geq \lambda \left\| \di v \right\|_{L^2 \Lambda^{r}(U)} +  \left\| \delta v \right\|_{L^2 \Lambda^{r}(U)}.
\end{equation*}
Moreover, by the Gaffney-Friedrich inequality stated in Proposition~\ref{GFCDsmooth}, we have
\begin{align*}
\int_{U} \di v \wedge \ahom \di v + \int_{U} \delta v \wedge  \star \delta v & \geq \lambda \left\| \di v \right\|_{L^2 \Lambda^{r}(U)} +  \left\| \delta v \right\|_{L^2 \Lambda^{r}(U)} \\
															& \geq c \left\| \nabla v \right\|_{L^2 \Lambda^{r-1}(U)},
\end{align*}
for some constant $c := c(d , \lambda, U) > 0$.
Arguing by contradiction, it is straightforward to prove the following Poincar\'e inequality: there exists a constant $C:= C(d, U) < \infty$ such that for each form $u$ belonging to the space $H^1 \Lambda^{r-1} (U) \cap \mathcal{H}^{r-1}_D(U)^\perp $ satisfying $\mathbf{t}u = 0$ on $\partial U$, on has the estimate
\begin{equation} \label{finalpoincareineqothharm}
\left\| u \right\|_{L^2 \Lambda^{r-1}(U)} \leq C \left\| \nabla u \right\|_{L^2 \Lambda^{r-1}(U)}.
\end{equation}
The previous inequality implies that the functional $\mathcal{J}$ is coercive on the space
$
\left\{ u \in H^1 \Lambda^{r-1} (U) \cap \mathcal{H}^{r-1}_D(U)^\perp ~:~ \mathbf{t}u = 0 ~\mbox{on} ~\partial U \right\},
$
equipped with the $H^1 \Lambda^{r-1} (U)$-norm. Moreover, the functional $\mathcal{J}$ is uniformly convex. The standard techniques of the calculus of variations then show that there exists a unique minimizer of the functional $\mathcal{J}$ denoted by $u$. By the first variation formula, one has, for each form $v \in H^1 \Lambda^{r-1} (U) \cap \mathcal{H}^{r-1}_D(U)^\perp$ satisfying the boundary condition $\mathbf{t} v = 0$ on $\partial U$,
\begin{equation*}
\int_{U} \di u \wedge \ahom \di v + \int_{U} \delta u \wedge  \star \delta v = \int_{U} \di f \wedge \ahom \di v.
\end{equation*}
Additionally, for each form $v \in \mathcal{H}^{r-1}_D(U)$, one has
\begin{equation*}
\int_{U} \di u \wedge \ahom \di v + \int_{U} \delta u \wedge  \star \delta v = \int_{U} \di f \wedge \ahom \di v = 0.
\end{equation*}
Thus for each form $v \in H^1 \Lambda^{r-1} (U)$ satisfying the boundary condition $\mathbf{t}v = 0$ on $\partial U$, we have
\begin{equation*}
 \int_{U} \di u \wedge \ahom \di v + \int_{U} \delta u \wedge  \star \delta v = \int_{U} \di f \wedge \ahom \di v
\end{equation*}
and the proof of Step 1 is complete. As a remark, we note that since the form $\di f$ belongs to the space $H^1 \Lambda^{r} (U)$, the form $\di \ahom \di f$ belongs to the space $L^2 \Lambda^{d-r+1} (U)$. Thus, if we denote by $g := \di \ahom \di f \in L^2 \Lambda^{d-r+1} (U)$, one has the identity
\begin{equation} \label{varregb2}
\int_{U} \di u \wedge \ahom \di v + \int_{U} \delta u \wedge  \star \delta v = \int_{U} g \wedge  v,
\end{equation}
for each form $v \in H^1 \Lambda^{r-1} (U)$ satisfying the boundary condition $\mathbf{t}v = 0$ on $\partial U$.

\smallskip

\textit{Step 2.} We show that the form $u$ constructed in the previous step is a solution of the equation
\begin{equation*}
\left\{ \begin{aligned}
\di \ahom \di u = \di \ahom \di f ~\mbox{in}~U, \\
\mathbf{t} u = 0 ~\mbox{on} ~ \partial U.
\end{aligned}  \right.
\end{equation*}
To prove this result, it is enough, by Proposition~\ref{propdensity}, to show that for each form $v \in H^1 \Lambda^{r-1} (U)$ satisfying the boundary condition $\mathbf{t}v = 0$ on $\partial U$,
\begin{equation*}
\int_{U} \di u \wedge \ahom \di v  = \int_{U} \di f \wedge \ahom \di v.
\end{equation*}
To this end, we fix a form $v \in H^1 \Lambda^{r-1} (U) \cap  H^1_{\di , 0} \Lambda^{r-1} (U) $. We denote by $\alpha_v$ the unique form of~$ C^{r-1}_{\di,0}(U)$ which satisfies
\begin{equation*}
\alpha_v = \underset{\alpha \in C^{r-1}_{\di,0}(U)}{\argmin} \left\| v - \alpha \right\|_{L^2 \Lambda^{r-1} (U)},
\end{equation*}
and we set $w := v - \alpha_v$. In particular, this form satisfies, for each smooth form $\gamma \in C^\infty_c \Lambda^{r-2}(U)$,
$
\left\langle w , \di \gamma \right\rangle_{U} = 0.
$
This implies $\delta w = 0$. Moreover it is clear that $\di w = \di v$ and that $\mathbf{t}w = 0$ on the boundary $\partial U$. Thus, by the Gaffney-Friedrich inequality, the form $w$ belongs to the space $H^1 \Lambda^{r-1} (U)$ and it can be tested in the equation~\eqref{varregb}. This gives
\begin{equation*} 
 \int_{U} \di u \wedge \ahom \di w  = \int_{U} \di f \wedge \ahom \di w.
\end{equation*}
Since $\di w = \di v$, the previous equality can be rewritten
\begin{equation*}
 \int_{U} \di u \wedge \ahom \di v  = \int_{U} \di f \wedge \ahom \di v,
\end{equation*}
which is the desired result. The proof of Step 2 is complete.

\smallskip

\textit{Step 3.} In this step, we follow the arguments of the proofs of~\cite[Section 2.3]{Sch95}. Let $X$ be a smooth vector field supported in $U$, tangent to the boundary of $U$. This vector field generates a global flow $\psi^X_t$ such that for every $t \in \R$, $\psi^X_t$ is a smooth diffeomorphism of $ U$. The pullback $\left( \psi^X_t\right)^*$ gives rise to the following linear mapping, for $t \in \R \setminus \{ 0 \}$,
\begin{equation*}
\Sigma_t^X :=  \left\{ \begin{aligned}[l]
 	L^2 \Lambda^{r-1}(U) & \rightarrow L^2 \Lambda^{r-1}(U)  \\
       \omega  & \mapsto \frac 1t \left( \left( \psi^X_t\right)^*  \omega - \omega  \right).
    \end{aligned}\right.
\end{equation*}
This operator satisfies a number of convenient properties which are listed below. Most of these properties can be found in~\cite[Section 2.3 and Lemma 2.3.1]{Sch95}.

\begin{lemma}[Properties of $\Sigma_t^X$] \label{propertiesSigmatX}
The operator $\Sigma_t^X$ satisfies the following properties:
\begin{itemize}
\item Since the pullback $\left( \psi^X_t\right)^*$ commutes with the exterior derivative $\di$, so does the mapping $\Sigma_t^X$,
\begin{equation*}
\di \Sigma_t^X \omega =  \Sigma_t^X \di \omega.
\end{equation*}
\item Since the pullback $\left( \psi^X_t\right)^*$ commutes with the projection to the tangential component, so does the mapping $\Sigma_t^X$,
\begin{equation*}
\mathbf{t} \omega = 0 \implies \mathbf{t} \Sigma_t^X \omega = 0.
\end{equation*}
\item There exists a constant $C := C(d , U, X) <\infty$ such that for each $t \in [-1,1]$ and each form $\omega \in H^1 \Lambda^{r-1}(U)$,
\begin{equation*}
\left\| \Sigma_t^X \omega \right\|_{L^2 \Lambda^{r-1}(U)} \leq C \left\|  \omega \right\|_{H^1 \Lambda^{r-1}(U)}.
\end{equation*}
\item Let $\Theta_t^X : L^2 \Lambda^{r-1}(U)  \rightarrow L^2 \Lambda^{r-1}(U)$ be the operator defined according to the formula
\begin{equation*}
\Sigma_t^X \star \omega =\star  \Sigma_t^X  + \star \Theta_t^X \omega.
\end{equation*}
Then there exists a constant $C := C(d , U, X) <\infty$ such that for each $t \in [-1,1],$
\begin{equation*}
\left\| \Theta_t^X \omega \right\|_{L^2 \Lambda^{r-1}} \leq C \left\| \omega \right\|_{L^2 \Lambda^{r-1}}.
\end{equation*}
\item Let $\Theta_{t}^{\ahom, X} : L^2 \Lambda^{r}(U)  \rightarrow L^2 \Lambda^{d-r}(U)$ be the operator defined according to the formula
\begin{equation*}
\Sigma_t^X \ahom  =\ahom  \Sigma_t^X  - \left(\psi_X^t \right)^*  \Theta_{t}^{\ahom, X} \omega,
\end{equation*}
then there exists a constant $C := C(d , U, X) <\infty$ such that for each $t \in [-1,1],$
\begin{equation*}
\left\| \Theta_{t}^{\ahom, X} \omega \right\|_{L^2 \Lambda^{d-r}(U)} \leq C \left\| \omega \right\|_{L^2 \Lambda^{r}(U)}.
\end{equation*}
\end{itemize}
\end{lemma}

\begin{proof}
All these properties are proved in~\cite[Section 2.3 and Lemma 2.3.1]{Sch95} except for the last one, which we now prove.

An explicit computation gives the following formula for the operator $\Theta_{t}^{\ahom, X}$,
\begin{equation*}
\Theta_{t}^{\ahom, X} = \frac{\left(\psi_X^{-t} \right)^* \ahom  \left(\psi_X^{t} \right)^* - \ahom }{t}.
\end{equation*}
Then, using this formula, we note that there exist smooth functions $\phi_{I,J} : [-1,1] \times U \rightarrow \R$, with $I,J \subseteq \{1, \ldots, d  \}$ satisfying $|I|=r$ and $|J |= d-r$, such that, for each form $\omega \in L^2 \Lambda^r(U)$,
\begin{equation*}
\left(\psi_X^{-t} \right)^* \ahom  \left(\psi_X^{t} \right)^* \omega = \sum_{I,J} \omega_I(x) \phi_{I,J}(t,x) \di x_J.
\end{equation*}
Using that all the functions $\phi_{I,J}$ are smooth and that $\left(\psi_X^0 \right)^* \ahom \left(\psi_X^{0} \right)^* = \ahom $, we obtain the result.
\end{proof}

With these properties, one can prove the following estimate, which allows to perform integration by parts: there exists a constant $C := C(d , \lambda, X) <\infty$ such that for each $t \in [-1, 1]$, and for each pair of forms $v , w \in H^1 \Lambda^{r-1} (U)$,
\begin{equation} \label{formipp}
\left| \int_{U} \di v \wedge \ahom \di \Sigma_{-t}^X w - \int_{U} \di \Sigma_t^X v \wedge \ahom \di  w \right| \leq C  \left\| \di v \right\|_{L^2 \Lambda^{r}(U)}\left\| \di w\right\|_{L^2 \Lambda^{r}(U)}
\end{equation}
and
\begin{equation} \label{formipp2}
\left| \int_{U} \delta \Sigma_t^X v \wedge \star \delta  w - \int_{U} \delta v \wedge \star \delta \Sigma_{-t}^X w \right| \\ \leq C  \left\| \delta v \right\|_{L^2 \Lambda^{r-2}(U)}\left\| \delta w\right\|_{L^2 \Lambda^{r-2}(U)}.
\end{equation}
We first prove the estimate~\eqref{formipp}. We note that, for each pair of forms $\omega, \xi \in L^2 \Lambda^r(U)$,
\begin{equation*}
\Sigma_{t}^X ( \omega \wedge \ahom \xi )=  \Sigma_{t}^X \omega \wedge \ahom \xi  - \left( \psi^X_t \right)^* \left( \omega \wedge \ahom \Sigma_{-t}^X \xi \right) +  \left( \psi^X_t \right)^* \left( \omega \wedge \left( \psi^X_{-t} \right)^* \Theta_{-t}^{\ahom, X} \xi \right) .
\end{equation*}
Integrating this equation over $U$ and using the formula~\eqref{changevar} gives
\begin{equation*}
\int_U  \Sigma_{t}^X  w \wedge \ahom \xi  - \int_U \left( w \wedge \ahom \Sigma_{-t}^X \xi \right) + \int_U \left( w \wedge \left( \psi^X_{-t} \right)^* \Theta_{-t}^{\ahom, X} \xi \right) = 0.
\end{equation*}
Applying the previous formula with $\omega = \di v$ and $\xi = \di w$ and using Lemma~\ref{propertiesSigmatX} gives
\begin{equation*}
\left| \int_{U} \di v \wedge \ahom \di \Sigma_{-t}^X w - \int_{U} \di \Sigma_t^X v \wedge \ahom \di  w \right|  \leq C    \left\| \di v \right\|_{L^2 \Lambda^{r}(U)}\left\| \di w\right\|_{L^2 \Lambda^{r}(U)}.
\end{equation*}
The proof of the inequality~\eqref{formipp2} is similar and we omit the details.

We then apply the estimate~\eqref{formipp} and~\eqref{formipp2} with the forms $v = u$ and $w = \Sigma_t^X u$. This gives
\begin{multline*}
\left| \int_{U} \di u \wedge \ahom \di \Sigma_{-t}^X \Sigma_{t}^X u - \int_{U} \di \Sigma_t^X u \wedge \ahom \di \Sigma_{t}^X u \right| + \left| \int_{U} \delta \Sigma_t^X u \wedge \star \delta  \Sigma_t^X u - \int_{U} \delta u \wedge \star \delta \Sigma_{-t}^X \Sigma_t^X u  \right| \\ \leq C  \left\| u \right\|_{H^1 \Lambda^{r-1}(U)}\left\| \Sigma_t^X u  \right\|_{H^1 \Lambda^{r-1}(U)},
\end{multline*}
where we used the estimate $ \left\| \di u \right\|_{L^2 \Lambda^{r}(U)} +  \left\| \delta u \right\|_{L^2 \Lambda^{r-2}(U)} \leq \left\| u \right\|_{H^1 \Lambda^{r-1}(U)}$. But, by the definition of the form $u$ given in Step 1, one has
\begin{equation*}
 \int_{U} \di u \wedge \ahom \di \Sigma_{-t}^X \Sigma_{t}^X u + \int_{U} \delta u \wedge \star \delta \Sigma_{-t}^X \Sigma_t^X u = \int_{U} \di f \wedge \ahom \di \Sigma_{-t}^X \Sigma_t^X u.
\end{equation*}
The term on the right-hand side can be estimated by the estimate~\eqref{formipp},
\begin{align*}
\left| \int_{U} \di f \wedge \ahom \di \Sigma_{-t}^X \Sigma_t^X u \right| & \leq  \left| \int_{U} \di  \Sigma_{t}^X f \wedge \ahom \di \Sigma_t^X u \right| + C  \left\| \di f \right\|_{L^2 \Lambda^{r}(U)}\left\| \Sigma_t^X u  \right\|_{H^1 \Lambda^{r-1}(U)} \\
													& \leq C  \left\| \di f \right\|_{H^1 \Lambda^{r}(U)}\left\| \Sigma_t^X u  \right\|_{H^1 \Lambda^{r-1}(U)}.
\end{align*}
Combining the few previous displays implies
\begin{equation*}
\int_{U} \di \Sigma_t^X u \wedge \ahom \di \Sigma_t^X u + \int_{U} \delta \Sigma_t^X u \wedge \star \delta  \Sigma_t^X u \leq C  \left( \left\| u \right\|_{H^1 \Lambda^{r-1}(U)}+  \left\| \di f \right\|_{H^1 \Lambda^{r}(U)} \right) \left\| \Sigma_t^X u  \right\|_{H^1 \Lambda^{r-1}(U)}.
\end{equation*}
By the version of the Gaffney-Friedrich inequality stated in Proposition~\ref{GFCDsmooth}, one obtains
\begin{equation*}
\left\| \Sigma_t^X u \right\|_{H^1 \Lambda^{r-1} (U)}^2 \leq C  \left( \left\| u \right\|_{H^1 \Lambda^{r-1}(U)} +  \left\| \di f \right\|_{H^1 \Lambda^{r}(U)} \right) \left\| \Sigma_t^X u  \right\|_{H^1 \Lambda^{r-1}(U)} + C  \left\| \Sigma_t^X u \right\|_{L^2 \Lambda^{r-1}(U)}^2.
\end{equation*}
The previous inequality can be further refined
\begin{equation*}
\left\| \Sigma_t^X u \right\|_{H^1 \Lambda^{r-1} (U)}^2 \leq C  \left( \left\| u \right\|_{H^1 \Lambda^{r-1}(U)} +  \left\| \di f \right\|_{H^1 \Lambda^{r-1}(U)} \right) \left\| \Sigma_t^X u  \right\|_{H^1 \Lambda^{r-1}(U)} +C  \left\| u \right\|_{H^1 \Lambda^{r-1}(U)}^2.
\end{equation*}
Consequently
\begin{equation} \label{step3mqiinboundaryH2}
\left\| \Sigma_t^X u \right\|_{H^1 \Lambda^{r-1} (U)} \leq C  \left( \left\| u \right\|_{H^1 \Lambda^{r-1}(U)} +  \left\| \di f \right\|_{H^1 \Lambda^{r-1}(U)} \right).
\end{equation}
\medskip

\textit{Step 4.} Interior regularity. Using the result of Step 3, we prove that $u$ is locally $H^2$ in the set $U$. To this end, we fix $x \in U$ and consider an open subset $V_x$ such that $x \in V_x \subseteq \bar V_x \subseteq U. $
Consider $d$ vector fields $X_1, \ldots, X_d$ compactly supported in $U$ such that, for each $k \in \{ 1 , \ldots, d \}$, and each point $y \in V$, $X_k (y) = e_k .$
We then recall the notation for the finite difference operator used in Proposition~\ref{H2interiorregularityestimate}: for $h > 0$ small, and $k \in \{ 1 , \ldots, d \}$, we denote by
\begin{equation*}
D^h_k u(x) = \frac{u(x+ he_k) - u(x)}{h}.
\end{equation*}
By the estimate~\eqref{step3mqiinboundaryH2}, we deduce that for $t > 0$ small enough and each $k \in \{ 1 , \ldots, d \}$,
\begin{equation*}
\left\| D^h_k u \right\|_{H^1 \Lambda^{r-1} (V)} \leq C  \left( \left\| u \right\|_{H^1 \Lambda^{r-1}(U)} +  \left\| \di f \right\|_{H^1 \Lambda^{r-1}(U)} \right).
\end{equation*}
Thus, according to~\cite[Lemma 7.24]{gilbarg2015elliptic}, the form $u$ belongs to the space $H^2 \Lambda^r (V)$ and we have the estimate
\begin{equation*}
\left\| u \right\|_{H^2 \Lambda^{r-1} (V)} \leq C  \left( \left\| u \right\|_{H^1 \Lambda^{r-1}(U)} +  \left\| \di f \right\|_{H^1 \Lambda^{r-1}(U)} \right).
\end{equation*}

\medskip

\textit{Step 5.} Boundary regularity I. The first part of this step is to reduce the problem to the half-ball denoted by $B^+ : = \left\{ x \in B(0,1) \, : \, x_n \geq 0\right\}$. We introduce the notation $B^+_{\frac 12} : = \left\{ x \in B\left(0,\frac12 \right) \, : \, x_n \geq 0\right\}$. 

Select $x \in \partial U$. Since the boundary $\partial U$ is assumed to be smooth there exists an open set $V \subseteq \Rd$ such that $x \in V$ and a smooth positively oriented diffeomorphism $\Phi : B(0,1) \rightarrow V$ such that
\begin{equation*}
\Phi \left( B^+ \right) =  V \cap \bar{U} \mbox{ and } \Phi(0) = x.
\end{equation*} 
Using the change of variables formula~\eqref{changevar} and the definition of the tangential trace~\eqref{tangentialtrace}, the following implication holds
\begin{equation*}
v \in H^{1}_{\di , 0} \Lambda^{r-1} (U) \implies  \mathbf{t} \Phi^* v =0~ \mbox{on} ~ \left\{ x \in B(0,1) \, : \, x_n = 0 \right\}.
\end{equation*}
To ease the notation, we denote by $u_\Phi :=  \left( \Phi \right)^* u$. It is a form defined on the set $B^+$. The purpose of this step is to prove that, for each $k \in \{ 1 , \ldots, d-1 \}$, the derivative $\partial_k \nabla u_\Phi$ belongs to the space $L^2 \Lambda^r \left(B^+\right)$.
\smallskip

As in the previous step, consider $(d-1)$ vector fields $X_1 , \ldots , X_{d-1}$ compactly supported in the half-ball $B^+$, tangent to the boundary of $B^+$ and satisfying
$
X_k (y) = e_k ,$ for each $y \in  B^+_{\frac 12}.
$
Note then that one has the identity, for each $k \in \{ 1 , \ldots, d-1 \}$,
\begin{equation} \label{est:1324jul}
 \left( \psi^{X_k}_t\right)^*u_\Phi = \left( \Phi \right)^* \left( \psi^{\tilde X_k}_t\right)^*  u ,
\end{equation}
where $\tilde X_k$ is the vector field defined on $V$ according to the formula
\begin{equation*}
\tilde X_k (\Phi(x) ) := \di \Phi (x) (X_k(x)), \quad \forall x \in B^+.
\end{equation*}
From the estimate~\eqref{est:1324jul}, we deduce
$
 \left( \Sigma^{X_k}_t\right)^*u_\Phi = \left( \Phi \right)^* \left( \Sigma^{\tilde X_k}_t\right)^*  u.
$
Thanks to the estimate~\eqref{step3mqiinboundaryH2}, one has
\begin{equation*}
\left\| \left( \Sigma^{\tilde X_k}_t\right)^*  u \right\|_{H^1 \Lambda^{r-1}(V)} \leq C  \left( \left\| u \right\|_{H^1 \Lambda^{r-1}(U)} +  \left\| \di f \right\|_{H^1 \Lambda^{r-1}(U)} \right),
\end{equation*}
for some constant $C := C(d , \lambda , \Phi, X_k) < \infty$. By the estimate~\eqref{phistarhscont}, the previous display can be further refined
\begin{equation*}
\left\|\left( \Phi \right)^* \left( \Sigma^{\tilde X_k}_t\right)^*  u \right\|_{H^1 \Lambda^{r-1}(V)} \leq C  \left( \left\| u \right\|_{H^1 \Lambda^{r-1}(U)} +  \left\| \di f \right\|_{H^1 \Lambda^{r-1}(U)} \right),
\end{equation*}
for some constant $C := C(d ,  \lambda , \Phi, X_k) < \infty$. This can be rewritten
\begin{equation} \label{1328eq:jul}
\left\| \left( \Sigma^{X_k}_t\right)^*u_\Phi \right\|_{H^1 \Lambda^{r-1}(V)} \leq C  \left( \left\| u \right\|_{H^1 \Lambda^{r-1}(U)} +  \left\| \di f \right\|_{H^1 \Lambda^{r-1}(U)} \right),
\end{equation}
for some constant $C := C(d , \lambda , \Phi, X_k) < \infty$. We then take the limit $t$ tends to $0$ in the estimate~\eqref{1328eq:jul} and apply~\cite[Lemma 7.24]{gilbarg2015elliptic} to get that, for each integer $k \in \{ 1 , \ldots, d-1 \}$, the derivative $\partial_k \nabla u_\Phi$ belongs to the space $L^2 \Lambda^r \left(B^+\right)$, with the estimate
\begin{equation*}
\left\| \partial_k \nabla u_\Phi \right\|_{L^2 \Lambda^{r-1}\left(B^+_{\frac 12}\right)} \leq  C  \left( \left\| u \right\|_{H^1 \Lambda^{r-1}(U)} +  \left\| \di f \right\|_{H^1 \Lambda^{r-1}(U)} \right),
\end{equation*}
for some constant $C := C(d , \lambda , \Phi) < \infty$. The proof of Step 5 is complete.

\medskip

\textit{Step 6.} Boundary regularity II. The purpose of this step is to prove that the form $u_\Phi$ belongs to the space $H^2 \Lambda^{r-1}\left( B^+_{1/2} \right)$. To this end, we see that, thanks to the previous step, there only remains to prove that $\partial_d \partial_d u_\Phi$ belongs to the space $L^2 \Lambda^{r-1}\left( B^+_{1/2} \right)$. This is what is proved in this step, along with the estimate
\begin{equation} \label{mainstep456}
\left\|\partial_d \partial_d u_\Phi \right\|_{ L^2 \Lambda^{r-1}\left( B^+_{1/2} \right)} \leq C  \left( \left\| u \right\|_{H^1 \Lambda^{r-1}(U)} +  \left\| \di f \right\|_{H^1 \Lambda^{r-1}(U)} \right),
\end{equation}
for some constant $C:= C(d ,\lambda, \Phi ) < \infty$. The main ingredient is the uniform ellipticity of the operator
$
\di \ahom \di  +  (-1)^r  \star \di \delta.
$
Since $u_\Phi =  \left( \Phi \right)^* u$, one sees that this differential form is a solution of the following equation
\begin{equation*}
\Phi^* \left( \di \ahom \di  +  (-1)^r  \star \di \delta \right) \left(\Phi^{-1} \right)^* u_\Phi =  \left( \Phi \right)^* \di \ahom \di  f \quad \mbox{in}~ B^+.
\end{equation*}
This second order differential operator can be written in the form
\begin{equation*}
\Phi^* \left( \di \ahom \di  +  (-1)^r  \star \di \delta \right) \left(\Phi^{-1} \right)^* u = \sum_{j,k = 1 }^{d}   A_{j,k}\partial_{j}  \partial_{k} u + \sum_{j=1}^{d}  A_j  \partial_{j} u + Au,
\end{equation*}
where the coefficients $A_{j,k},A_j  $ and $ A$ are smooth functions from the half-ball $B^+$ to the space of matrices of size $\binom{d}{r-1} \times \binom{d}{r-1}$ (or equivalently the space of endomorphisms of $\Lambda^{r-1} \left( \Rd \right)$). Since this operator is self-adjoint, one knows that the matrices $A_{i,j}$ are symmetric. The strategy to prove the estimate~\eqref{mainstep456} is to show that this operator is strongly elliptic, i.e.,
\begin{equation}\label{defstrongell}
 \sum_{j,k = 1 }^{d} \left( \eta^\intercal  A_{j,k}(x) \eta \right) \xi_j \xi_k \geq c |\eta|^2 |\xi|^2 \hspace{10mm} \forall x \in U, \forall \eta \in \R^{\binom{d}{r-1}}, \forall \xi \in \Rd.
\end{equation}
To prove the strong ellipticity, it is enough, by~\cite[Theorem 4.6]{mclean2000strongly}, to prove that, for each smooth form $w \in C_c^\infty \Lambda^{r-1} \left( B^+ \right)$,
\begin{equation} \label{H1coercive}
\int_{B^+} \Phi^* \left( \di \ahom \di  +  (-1)^r  \star \di \delta \right) \left(\Phi^{-1} \right)^* w \wedge w \geq c \left\| w  \right\|_{H^1\Lambda^{r-1} \left( B^+ \right) }^2 - C   \left\| w  \right\|_{L^2\Lambda^{r-1} \left( B^+ \right) }^2 .
\end{equation}
This is a consequence of the following computation
\begin{align*}
\lefteqn{\int_{B^+} \Phi^* \left( \di \ahom \di  +  (-1)^r  \star \di \delta \right) \left(\Phi^{-1} \right)^* w \wedge w} \qquad & \\ &  = \int_{V}  \left( \di \ahom \di  +  (-1)^r  \star \di \delta \right) \left(\Phi^{-1} \right)^* w \wedge \left(\Phi^{-1} \right)^* w \\
																				& = \int_{V}    \ahom \di  \left(\Phi^{-1} \right)^* w \wedge \di \left(\Phi^{-1} \right)^*  w  + \int_{V}     \delta \left(\Phi^{-1} \right)^* w \wedge \star \delta \left(\Phi^{-1} \right)^* w \\
																				& \geq \lambda \left\|  \di \left(\Phi^{-1} \right)^*  w  \right\|_{L^2 \Lambda^{r} \left( V \right)}^2 +  \left\|  \delta \left(\Phi^{-1} \right)^*  w  \right\|_{L^2 \Lambda^{r-2} \left( V \right)}^2.
\end{align*}
Since the form $w$ belongs to the space $C_c^\infty \Lambda^{r-1} \left( B^+ \right)$, we deduce that the form $\left(\Phi^{-1} \right)^* w$ belongs to the space $C_c^\infty \Lambda^{r-1} \left( V \right)$ and thus, by the Gaffney-Friedrich inequality,
\begin{equation*}
\left\|  \di \left(\Phi^{-1} \right)^*  w  \right\|_{L^2 \Lambda^{r} \left( V \right)}^2 + \left\|  \delta \left(\Phi^{-1} \right)^*  w  \right\|_{L^2 \Lambda^{r-2} \left( V \right)}^2 \geq c \left\|  \left(\Phi^{-1} \right)^* w \right\|_{H^1 \Lambda^{r} \left( V \right)}^2 - C  \left\|  \left(\Phi^{-1} \right)^* w \right\|_{L^2 \Lambda^{r} \left( V \right)}^2 .
\end{equation*}
We then note that
$
 \left\|  w \right\|_{H^1 \Lambda^{r} \left( V \right)}  \leq  C \left\|  \left(\Phi^{-1} \right)^* w \right\|_{H^1 \Lambda^{r} \left( B^+ \right)} 
$
and
$
 \left\|  \left(\Phi^{-1} \right)^* w \right\|_{L^2 \Lambda^{r} \left( B^+ \right)}  \leq  C \left\| w \right\|_{L^2 \Lambda^{r} \left( V \right)} 
$
for some constant $C:= C(d, \Phi) < \infty.$ This implies the estimate~\eqref{H1coercive}.
Now that one knows that the operator is strongly elliptic, one obtains, by \cite[Lemma 4.17]{mclean2000strongly}, that the coefficient $A_{d,d}$ has a uniformly bounded inverse. As a consequence, one has
\begin{align*}
 \left\| \partial_d \partial_d u_{\Phi} \right\|_{L^2 \Lambda^{r} \left( B^+ \right)} &\leq  \left\| A_{d,d} \partial_{d}  \partial_{d} u_{\Phi} \right\|_{L^2 \Lambda^{r} \left( B^+ \right)} \\
															& \leq \sum_{j=1}^{d}  \sum_{k = 1 }^{d-1}  \left\| A_{j,k}\partial_{j}  \partial_{k} u\right\| _{L^2 \Lambda^{r} \left( B^+ \right)} + \sum_{j=1}^{d}  \left\| A_j  \partial_{j} u\right\|_{L^2 \Lambda^{r} \left( B^+ \right)} \\& \qquad + \left\|  Au\right\|_{L^2 \Lambda^{r} \left( B^+ \right)} + \left\| \Phi^* \di \ahom \di f \right\|_{L^2 \Lambda^{d - r + 1}\left(B^+ \right)} .
\end{align*}
Using the main result of Step 3, this gives, for some constant $C:= C(d ,\lambda , \Phi) <\infty$,
\begin{equation*}
\left\| \partial_d \partial_d u_{\Phi} \right\|_{L^2 \Lambda^{r} \left( B^+ \right)} \leq C  \left( \left\| u \right\|_{H^1 \Lambda^{r-1}(U)} +  \left\| \di f \right\|_{H^1 \Lambda^{r}(U)} \right),
\end{equation*}
and the proof of Step 6 is complete.

\medskip
\textit{Step 7.} The main results of Steps 5 and 6 show that the function $u_{\Phi}$ belongs to the space $H^2 \Lambda^{r-1} \left(B^+ \right)$ together with the estimate, for some constant $C:= C(d ,\lambda , \Phi) <\infty$,
\begin{equation*}
\left\|  u_{\Phi} \right\|_{H^2 \Lambda^{r} \left( B^+ \right)} \leq C  \left( \left\| u \right\|_{H^1 \Lambda^{r-1}(U)} +  \left\| \di f \right\|_{H^1 \Lambda^{r}(U)} \right).
\end{equation*}
This implies 
\begin{equation} \label{almostglobest}
\left\|  u \right\|_{H^2 \Lambda^{r} \left( V  \cap U \right)} \leq C \left\|  u \right\|_{H^1 \Lambda^{r-1} \left(  U\right)}  + C \left\| \di f \right\|_{H^1 \Lambda^{r} \left( V \cap U \right)}.
\end{equation}
Since $\partial U$ is compact, we can cover $\partial U$ with finitely many open sets $V_1 , \ldots, V_N$. We sum the resulting estimates, along with the interior estimate proved in Step 3, and obtain $u \in H^2 \Lambda^{r} \left(  U \right)$ with the estimate, for some constant $C := C(d , \lambda , U) < \infty$,
\begin{equation*}
\left\| u \right\|_{H^2 \Lambda^{r} \left(U \right)} \leq C \left\|  u \right\|_{H^1 \Lambda^{r-1} \left(  U \right)}  + C \left\| \di f \right\|_{H^1 \Lambda^{r-1}(U)}.
\end{equation*}
We then simplify the right-hand side. Since we assumed $u \in \mathcal{H}^{r-1}_D(U)^\perp$, one has, by the Gaffney-Friedrich inequality stated in Proposition~\ref{GFCDsmooth},
\begin{equation*}
\left\| \nabla u \right\|_{L^2 \Lambda^{r-1} \left(  U \right)}  \leq C \left\| \di u \right\|_{L^2 \Lambda^{r} \left(  U \right)} + C \left\| \delta u \right\|_{L^2 \Lambda^{r-2} \left(  U \right)}.
\end{equation*}
This inequality can be further refined, thanks to the version of the Poincar\'e inequality stated in~\eqref{finalpoincareineqothharm}, into
\begin{equation*}
\left\|  u \right\|_{H^1 \Lambda^{r-1} \left(  U \right)}  \leq C \left\| \di u \right\|_{L^2 \Lambda^{r} \left(  U \right)} + C \left\| \delta u \right\|_{L^2 \Lambda^{r-2} \left(  U \right)}.
\end{equation*}
By the equality~\eqref{varregb2} and the ellipticity assumption~\eqref{ellipticityassumption}, one has
\begin{equation*}
\left\| \di u \right\|_{L^2 \Lambda^{r} \left(  U \right)}^2 + \left\| \delta u \right\|_{L^2 \Lambda^{r-2} \left(  U \right)}^2 \leq C \left\|  \di f  \right\|_{H^1 \Lambda^{r} \left(  U \right)}  \left\| u \right\|_{L^2 \Lambda^{d-r+1} \left(  U \right)}.
\end{equation*}
Combining the two previous displays with the estimate~\eqref{almostglobest} shows
\begin{equation*}
\left\|  u \right\|_{H^2 \Lambda^{r-1} \left(  U \right)} \leq C  \left\|  \di f  \right\|_{H^1 \Lambda^{r} \left(  U \right) } .
\end{equation*}
The proof of Step 7 is complete.

\medskip

\textit{Step 8.} The conclusion. Note that, if $u$ is a solution of the equation~\eqref{strongellpb}, then $f+u$ is a solution of the equation~\eqref{globalestpb}. Note also that, since two solutions of the equation~\eqref{globalestpb} differ by a form of~$C^{r-1}_{\di , 0}(U)$, they have the same exterior derivative. From these remarks and the previous estimate, one obtains the estimate~\eqref{globalestres}. The proof is complete.
\end{proof}

\small
\bibliographystyle{abbrv}
\bibliography{holes}

\end{document}